\title{Borel Line Graphs}
\date{}
\author{\lsstyle James~Anderson}
\email{james.anderson@math.gatech.edu}
\author{\lsstyle Anton~Bernshteyn}
\email{bernshteyn@math.ucla.edu}
\address{\normalfont{(JA) School of Mathematics, Georgia Institute of Technology, Atlanta, GA, USA}}
\address{\normalfont{(AB) Department of Mathematics, University of California, Los Angeles, CA, USA}}
\thanks{Research is partially supported by the NSF grant DMS-2045412 and the NSF CAREER grant DMS-2239187.}
\newtheoremstyle{bfnote}%
{}{}%
{\slshape}{}%
{\bfseries}{\bfseries.}%
{ }%
{\thmname{#1}\thmnumber{ #2}\thmnote{ \ep{\normalfont{}#3}}}
\theoremstyle{bfnote}
\newtheorem{Theorem}{Theorem}[section]
\newtheorem*{Theorem*}{Theorem}
\newtheorem{Lemma}[Theorem]{Lemma}
\newtheorem*{Lemma*}{Lemma}
\newtheorem{Claim}[Theorem]{Claim}
\newtheorem{Corollary}[Theorem]{Corollary}
\newtheorem*{Corollary*}{Corollary}
\theoremstyle{definition}
\newtheorem{Definition}[Theorem]{Definition}
\newtheorem*{Definition*}{Definition}
\newtheorem{Example}[Theorem]{Example}
\newtheorem*{Example*}{Example}
\theoremstyle{remark}
\newtheorem*{ques*}{Question}
\newtheorem*{remk*}{Remark}
\newcommand{\neutralize}[1]{\expandafter\let\csname c@#1\endcsname\count@}
\newcommand*{\myproofname}{Proof}
\newenvironment{claimproof}[1][\myproofname]{\begin{proof}[#1]}{\end{proof}}
\newcommand{\0}{\emptyset}
\newcommand{\set}[1]{\{#1\}}
\newcommand{\N}{{\mathbb{N}}}
\newcommand{\Z}{\mathbb{Z}}
\newcommand{\R}{\mathbb{R}}
\newcommand{\E}{\mathbb{E}}
\renewcommand{\epsilon}{\varepsilon}
\renewcommand{\phi}{\varphi}
\renewcommand{\theta}{\vartheta}
\renewcommand{\leq}{\leqslant}
\renewcommand{\geq}{\geqslant}
\newcommand{\defeq}{\coloneqq}
\newcommand{\im}{\mathrm{im}}
\newcommand{\bemph}[1]{{\normalfont#1}} 
\newcommand{\ep}[1]{\bemph{(}#1\bemph{)}} 
\newcommand{\pto}{\dashrightarrow}
\newcommand{\emphdef}[1]{\textbf{\textit{{#1}}}}
\numberwithin{equation}{section}
\newcommand{\emphd}[1]{\emphdef{#1}}
\newcommand{\K}{\mathbb{K}}
\newif\ifcomment
\titleformat{\section}[block]{\scshape}{\thesection.}{1ex}{}
\titleformat{\subsection}[block]{\bfseries}{\thesubsection.}{1ex}{}
\titleformat{\subsubsection}[runin]{\itshape}{\bfseries\upshape\thesubsubsection.}{1ex}{}[.---]
\titlespacing*{\section}{0pt}{*3}{*1}
\titlespacing*{\subsection}{0pt}{*3}{*1}
\titlespacing*{\subsubsection}{0pt}{*1.5}{*0}
\setlist{topsep=3pt,itemsep=3pt}
\newsavebox\ideabox
\newenvironment{idea}
  {\begin{equation}
   \begin{lrbox}{\ideabox}
   \begin{minipage}{\dimexpr\columnwidth-2\leftmargini}
   \setlength{\leftmargini}{0pt}%
   \begin{quote}}
  {\end{quote}
   \end{minipage}
   \end{lrbox}\makebox[0pt]{\usebox{\ideabox}}
   \end{equation}}
\begin{document}

\vspace*{-19pt}

\maketitle

\begin{abstract}
    We characterize Borel line graphs in terms of 10 forbidden induced subgraphs, namely the 9 finite graphs from the classical result of Beineke together with a 10th infinite graph associated to the equivalence relation $\E_0$ on the Cantor space. As a corollary, we prove a partial converse to the Feldman--Moore theorem, which allows us to characterize all locally countable Borel line graphs in terms of their Borel chromatic numbers.
\end{abstract}

\section{Introduction}\label{Section: Intro}
For a set $X$ and $k \in \N$, we use $[X]^k$ to denote the set of all $k$-element subsets of $X$. When $A \subseteq X$, we let $A^c  \defeq X \setminus A$ be the complement of $A$. All graphs in this paper are simple, i.e., a graph $G$ consists of a vertex set $V(G)$ and an edge set $E(G) \subseteq [V(G)]^2$. When there is no chance of confusion, we use the standard graph-theoretic convention and write $xy$ instead of $\set{x,y}$ to indicate an edge joining vertices $x$ and $y$. \emphd{The line graph} $L(G)$ of a graph $G$ is defined by:
\begin{align*}
    V(L(G)) \,&\defeq\, E(G),\\
    E(L(G)) \,&\defeq\, \{\{e, e'\} \in [E(G)]^2 \,:\, e \cap e' \neq \0\}.
\end{align*}
We say that a graph $L$ is \emphd{a line graph} if it is isomorphic to the line graph of some graph $G$. Beineke famously characterized all line graphs by a list of 9 forbidden induced subgraphs: 
\begin{Theorem}[{Beineke \cite{Beineke}}]\label{Theorem: Beineke}
    A graph is a line graph if and only if it does not have an induced subgraph isomorphic to any of the 9 graphs in Fig.~\ref{Figure: 9 subgraphs}. 
\end{Theorem}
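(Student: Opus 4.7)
My plan is to proceed via Krausz's equivalent characterization of line graphs: a graph $G$ is a line graph if and only if $E(G)$ admits a partition into cliques such that every vertex of $G$ lies in at most two parts. This equivalence is elementary in both directions: given such a partition, one recovers a preimage $H$ by taking the parts as vertices of $H$ (adding a pendant for each vertex of $G$ contained in only one part) and joining two parts by an edge whenever they share a vertex of $G$; conversely, in $L(H)$ the sets of edges of $H$ incident to a common vertex yield the required Krausz partition.

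The ``only if'' direction of Beineke's theorem then reduces to a short case check: for each of the $9$ Beineke graphs $F$, verify that $E(F)$ admits no Krausz partition, hence $F$ is not a line graph. Since induced subgraphs of line graphs are themselves line graphs (deleting a vertex of $L(H)$ corresponds to deleting an edge of $H$), this suffices. For example, in the claw $K_{1,3}$ the three edges are pairwise triangle-free, so in any clique partition each edge forms its own part, forcing the center vertex to lie in three parts.

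For the ``if'' direction I would classify the triangles of $G$ as \emph{odd} if some vertex outside the triangle is adjacent to exactly one or exactly three of its vertices, and \emph{even} otherwise. In a line graph $L(H)$, triangles arising from triangles of $H$ are always even, while those arising from stars $K_{1,3} \subseteq H$ can be of either type. I would then define the candidate Krausz parts as equivalence classes of edges under the smallest equivalence relation identifying any two edges that lie in a common even triangle (with edges not in any triangle treated as singleton classes). The task becomes to show that, under the assumption that $G$ contains none of the $9$ forbidden induced subgraphs, these classes are cliques and each vertex lies in at most two of them; at that point, taking two endpoints per edge-class produces the preimage $H$.

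The hard part is precisely the verification in this last step. Each of the $9$ Beineke graphs is a minimal obstruction to one specific aspect of the partition: the claw $K_{1,3}$ obstructs the splitting of each neighborhood into two cliques, $K_5$ minus a perfect matching and the wheel $W_5$ obstruct the odd/even dichotomy locally, and so on. Establishing that these $9$ obstructions are not only necessary but also sufficient requires a delicate case analysis on configurations of a small number of vertices, and this constitutes the technical core of Beineke's original argument. Alternative formulations via Whitney's edge-isomorphism theorem or the inductive approach of van Rooij and Wilf streamline the presentation, but the combinatorial density of the casework does not fully disappear.
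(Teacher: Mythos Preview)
The paper does not prove Theorem~\ref{Theorem: Beineke}; it is quoted as a classical result with a citation to Beineke's original paper and used as a black box throughout. There is therefore nothing in the paper to compare your proposal against.

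As for the proposal itself: your outline via Krausz's clique-partition criterion and the odd/even triangle dichotomy is the standard route, and the ``only if'' direction is indeed a finite check. However, what you have written is a plan, not a proof. You explicitly acknowledge that the substantive content---verifying that the absence of the nine forbidden subgraphs forces the edge-equivalence classes to be cliques with each vertex in at most two of them---is ``a delicate case analysis'' that you do not carry out. That case analysis \emph{is} the theorem; without it, the argument is incomplete. If your intent is merely to indicate that the result is known and sketch why it is plausible, a citation (as the paper does) is the appropriate move; if your intent is to supply a self-contained proof, the casework must actually appear.
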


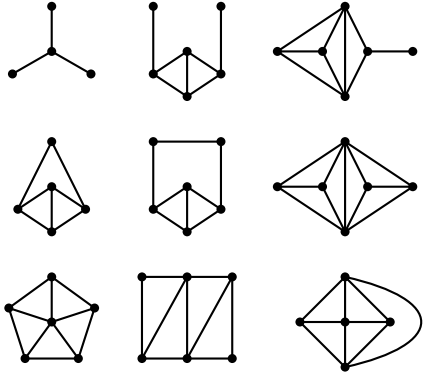
\begin{figure}[t]
\begin{tikzpicture}[
simple/.style={circle, draw = black, fill = black, very thick, inner sep=2pt, minimum size=1mm}, scale=0.6]


\fill[black] (0,0) circle (0.1);
\fill[black] (0,1) circle (0.1);
\fill[black] (-.95, .31) circle (0.1);
\fill[black] (-.59, -.81) circle (0.1);
\fill[black] (.95, .31) circle (0.1);
\fill[black] (.59, -.81) circle (0.1);
\draw[thick] (0, 0) -- (0, 1);
\draw[thick] (0, 0) -- (-.95, .31);
\draw[thick] (0, 0) -- (-.59, -.81);
\draw[thick] (0, 0) -- (.95, .31);
\draw[thick] (0, 0) -- (.59, -.81);
\draw[thick] (0, 1) -- (-.95, .31);
\draw[thick] (-.95, .31) -- (-.59, -.81);
\draw[thick] (-.59, -.81) -- (.59, -.81);
\draw[thick] (.59, -.81) -- (.95, .31);
\draw[thick] (.95, .31) -- (0, 1);

\fill[black] (0,3) circle (0.1);
\fill[black] (0,4) circle (0.1);
\fill[black] (0,3-1) circle (0.1);
\fill[black] (.75,3-.5) circle (0.1);
\fill[black] (-.75,3-.5) circle (0.1);
\draw[thick] (0, 3) -- (0, 2);
\draw[thick] (0, 3) -- (.75, 3-.5);
\draw[thick] (0, 3) -- (-.75, 3-.5);
\draw[thick] (0, 2) -- (.75, 3-.5);
\draw[thick] (0, 2) -- (-.75, 3-.5);
\draw[thick] (0, 4) -- (.75, 3-.5);
\draw[thick] (0, 4) -- (-.75, 3-.5);

\fill[black] (0,6) circle (0.1);
\fill[black] (0,7) circle (0.1);
\fill[black] (-.87,6-.5) circle (0.1);
\fill[black] (.87,6-.5) circle (0.1);
\draw[thick] (0, 6) -- (0, 7);
\draw[thick] (0, 6) -- (-.87, 6-.5);
\draw[thick] (0, 6) -- (.87, 6-.5);

\fill[black] (3,-.81) circle (0.1);
\fill[black] (3,1) circle (0.1);
\fill[black] (2,-.81) circle (0.1);
\fill[black] (2,1) circle (0.1);
\fill[black] (4,-.81) circle (0.1);
\fill[black] (4,1) circle (0.1);
\draw[thick] (3, -.81) -- (3, 1);
\draw[thick] (2, -.81) -- (2, 1);
\draw[thick] (4, -.81) -- (4, 1);
\draw[thick] (2, -.81) -- (3, 1);
\draw[thick] (3, -.81) -- (4, 1);
\draw[thick] (2, -.81) -- (3, -.81);
\draw[thick] (3, -.81) -- (4, -.81);
\draw[thick] (2, 1) -- (3, 1);
\draw[thick] (3, 1) -- (4, 1);

\fill[black] (0+3,3) circle (0.1);
\fill[black] (.75+3,4) circle (0.1);
\fill[black] (-.75+3,4) circle (0.1);
\fill[black] (0+3,3-1) circle (0.1);
\fill[black] (.75+3,3-.5) circle (0.1);
\fill[black] (-.75+3,3-.5) circle (0.1);
\draw[thick] (0+3, 3) -- (0+3, 2);
\draw[thick] (0+3, 3) -- (.75+3, 3-.5);
\draw[thick] (0+3, 3) -- (-.75+3, 3-.5);
\draw[thick] (0+3, 2) -- (.75+3, 3-.5);
\draw[thick] (0+3, 2) -- (-.75+3, 3-.5);
\draw[thick] (.75+3, 4) -- (.75+3, 3-.5);
\draw[thick] (.75+3, 4) -- (-.75+3, 4);
\draw[thick] (-.75+3, 3-.5) -- (-.75+3, 4);

\fill[black] (0+3,3+3) circle (0.1);
\fill[black] (.75+3,4+3) circle (0.1);
\fill[black] (-.75+3,4+3) circle (0.1);
\fill[black] (0+3,3-1+3) circle (0.1);
\fill[black] (.75+3,3-.5+3) circle (0.1);
\fill[black] (-.75+3,3-.5+3) circle (0.1);
\draw[thick] (0+3, 3+3) -- (0+3, 2+3);
\draw[thick] (0+3, 3+3) -- (.75+3, 3-.5+3);
\draw[thick] (0+3, 3+3) -- (-.75+3, 3-.5+3);
\draw[thick] (0+3, 2+3) -- (.75+3, 3-.5+3);
\draw[thick] (0+3, 2+3) -- (-.75+3, 3-.5+3);
\draw[thick] (.75+3, 4+3) -- (.75+3, 3-.5+3);
\draw[thick] (-.75+3, 3-.5+3) -- (-.75+3, 4+3);

\fill[black] (6.5,0) circle (0.1);
\fill[black] (7.5,0) circle (0.1);
\fill[black] (5.5,0) circle (0.1);
\fill[black] (6.5,1) circle (0.1);
\fill[black] (6.5,-1) circle (0.1);
\draw[thick] (6.5, 0) -- (7.5, 0);
\draw[thick] (6.5, 0) -- (5.5, 0);
\draw[thick] (6.5, 0) -- (6.5, 1);
\draw[thick] (6.5, 0) -- (6.5, -1);
\draw[thick] (7.5, 0) -- (6.5, -1);
\draw[thick] (6.5, -1) -- (5.5, 0);
\draw[thick] (5.5, 0) -- (6.5, 1);
\draw[thick] (6.5, 1) -- (7.5, 0);
\draw[thick] (6.5,1) .. controls (8.75,.5) and (8.75,-.5).. (6.5,-1);

\fill[black] (6,3) circle (0.1);
\fill[black] (5,3) circle (0.1);
\fill[black] (6.5,4) circle (0.1);
\fill[black] (6.5,2) circle (0.1);
\fill[black] (7,3) circle (0.1);
\fill[black] (8,3) circle (0.1);
\draw[thick] (6, 3) -- (5, 3);
\draw[thick] (6, 3) -- (6.5, 4);
\draw[thick] (6, 3) -- (6.5, 2);
\draw[thick] (7, 3) -- (6.5, 4);
\draw[thick] (7, 3) -- (6.5, 2);
\draw[thick] (7, 3) -- (8, 3);
\draw[thick] (5, 3) -- (6.5, 4);
\draw[thick] (5, 3) -- (6.5, 2);
\draw[thick] (6.5, 4) -- (6.5, 2);
\draw[thick] (8, 3) -- (6.5, 4);
\draw[thick] (8, 3) -- (6.5, 2);

\fill[black] (6,3+3) circle (0.1);
\fill[black] (5,3+3) circle (0.1);
\fill[black] (6.5,4+3) circle (0.1);
\fill[black] (6.5,2+3) circle (0.1);
\fill[black] (7,3+3) circle (0.1);
\fill[black] (8,3+3) circle (0.1);
\draw[thick] (6, 3+3) -- (5, 3+3);
\draw[thick] (6, 3+3) -- (6.5, 4+3);
\draw[thick] (6, 3+3) -- (6.5, 2+3);
\draw[thick] (7, 3+3) -- (6.5, 4+3);
\draw[thick] (7, 3+3) -- (6.5, 2+3);
\draw[thick] (7, 3+3) -- (8, 3+3);
\draw[thick] (5, 3+3) -- (6.5, 4+3);
\draw[thick] (5, 3+3) -- (6.5, 2+3);
\draw[thick] (6.5, 4+3) -- (6.5, 2+3);

\end{tikzpicture}
\caption{The 9 graphs of Beineke.}
\label{Figure: 9 subgraphs}
\end{figure}
We are interested in extending Beineke's result to the setting of \emphd{Borel graphs}, that is, graphs $G$ such that $V(G)$ is a standard Borel space and $E(G)$ is a Borel subset of $[V(G)]^2$. (We refer the reader unfamiliar with such terminology to Kechris's book on descriptive set theory \cite{kechris2012classicalDescriptiveSetTheory}; we also review some necessary descriptive set-theoretic background in \S\ref{Section: prelims}.) The systematic study of Borel graphs and their combinatorics was initiated in the landmark 1999 paper by Kechris, Solecki, and Todorcevic \cite{KST1999BorelChromatic}, who applied descriptive set theory to the study of graph colorings. This launched the development of the highly fruitful field of \textit{descriptive combinatorics}, which has connections to many areas of mathematics, including group theory, measure theory, ergodic theory, theoretical computer science, and more. For an overview of the field, see the 2020 survey by Kechris and Marks \cite{KechrisAndMarks2020SurveyDescriptiveGraphCombo} and the 2021 survey by Pikhurko \cite{Pikhurko2021Survey}.

Borel graphs $G$ and $H$ are \emphd{Borel isomorphic}, in symbols $G \cong_B H$, if there exists a Borel isomorphism from $G$ to $H$ (i.e., a graph isomorphism $f \colon V(G) \to V(H)$ that is a Borel function). Note that the line graph $L(G)$ of a Borel graph $G$ is itself a Borel graph. We say that a Borel graph $L$ is \emphd{a Borel line graph} if there exists a Borel graph $G$ such that $L \cong_B{L(G)}$. Clearly, if a graph is a Borel line graph, then it is both a Borel graph and a line graph.
Conversely, we ask:
    \begin{quote}
        \textsl{Given a Borel graph that is a line graph, when is it a Borel line graph?}
    \end{quote}
    
    To demonstrate that this question is nontrivial, let us give an example of a Borel graph that is a line graph but \textit{not} a Borel line graph. Recall that the \emphdef{Cantor space} is the set $\mathcal{C} \defeq \{0,1\}^{\N}$ of countably infinite binary strings endowed with the product topology, where the topology on $\set{0,1}$ is discrete. The equivalence relation $\E_0$ is defined on $\mathcal{C}$ by relating two elements if they are equal after some index; that is, given $\alpha$, $\beta \in \mathcal{C}$, we have
\[\alpha\, \mathbb{E}_0\, \beta  \iff \exists\, m \in \N,\, \forall n\geq m\, (\alpha(n) = \beta(n)).\]
This is a Borel equivalence relation\footnote{As usual, we view binary relations as sets of ordered pairs and say that a binary relation $R$ on a standard Borel space $X$ is \emphd{Borel} if it is a Borel subset of $X^2$.} with countable classes (for a survey on such equivalence relations, see the recent manuscript by Kechris \cite{Kechris2019CBER}). 
From $\E_0$, we define a graph $\K_0$ as follows:
\[V(\K_0) \,\defeq\, \mathcal{C}, \quad \quad E(\K_0) \,\defeq\, \{ \{\alpha, \beta\} \in [\mathcal{C}]^2 : \alpha \, \mathbb{E}_0 \, \beta\}.\]
In other words, $\K_0$ is obtained by making every $\mathbb{E}_0$-equivalence class into a clique. 
Then $\K_0$ is a Borel graph, and, being a collection of vertex-disjoint cliques, it is a line graph (a clique is isomorphic to the line graph of a star). However, $\K_0$ is not a Borel line graph. 
This essentially boils down to the fact that the relation $\E_0$ is not {smooth} (see Definition~\ref{Definition: smooth}), as if $\K_0$ were a Borel line graph, the map picking out the center of the star corresponding to each component of $\K_0$ would witness that $\E_0$ is smooth. 
Another simple proof uses Borel chromatic numbers:

\begin{Definition}[Borel chromatic number]
    Given a graph $G$, a \emphd{proper coloring} of $G$ is a function $f : V(G) \to C$, where $C$ is some set, such that for all $xy \in E(G)$, $f(x) \neq f(y)$. The \emphd{chromatic number} of $G$, denoted by $\chi(G)$, is the smallest cardinality of a set $C$ such that $G$ has a proper coloring $f \colon V(G) \to C$. 
For a Borel graph $G$, its \emphd{Borel chromatic number}, $\chi_B(G)$, is the smallest cardinality of a standard Borel space $X$ such that there exists a Borel proper coloring $f \colon V(G) \to X$.
\end{Definition}

A standard Baire category argument proves that $\chi_B(\K_0) > \aleph_0$ \cite[27]{KechrisAndMarks2020SurveyDescriptiveGraphCombo}. On the other hand, if $\K_0$ were a Borel line graph, then by the Feldman--Moore theorem (see Theorem \ref{Theorem: FM} below), we would have $\chi_B(\K_0) \leq \aleph_0$. It follows that $\K_0$ is not a Borel line graph. 

Given a graph $G$ and a subset $U \subseteq V(G)$, we let $G[U]$ denote the subgraph of $G$ induced by the vertices of $U$, i.e., $G[U] \defeq (U, E(G) \cap 
 [U]^2)$. Given graphs $G$ and $H$, we say that $G$ \emphd{contains a copy of} $H$ if $G$ has an induced subgraph isomorphic to $H$, i.e., if there exists a set $U \subseteq V(G)$ such that $G[U] \cong H$. Similarly, if $G$ and $H$ are Borel graphs, we say that $G$ \emphd{contains a Borel copy of} $H$ if there exists a Borel set $U \subseteq V(G)$ such that $G[U] \cong_B H$. 
 It is clear that the property of being a Borel line graph is preserved under taking Borel induced subgraphs. Therefore, for a Borel graph to be a Borel line graph, it must contain no copies of the 9 forbidden subgraphs of Beineke nor a Borel copy of $\K_0$\footnote{\textit{Borel} copy is important here, for there are Borel graphs $G$ whose line graphs contain a copy, but no Borel copy, of $\K_0$. For example, consider the Borel graph with vertex set $\R$ and edge set $\set{\set{x, x+n} \,: x \in [0,1),\, n \in \Z \setminus \set{0}}$. 
This graph has continuum many components isomorphic to countably infinite stars, and thus its line graph $L$ is isomorphic to $\K_0$. However, since $\K_0$ is not a Borel line graph, $L$ cannot contain a Borel copy of $\K_0$.}.
Our main result is that the converse is true, i.e., $\K_0$ is the \textit{only} additional obstruction in the Borel setting.

\begin{Theorem}\label{Theorem: 10 Graph Theorem}
Let $L$ be a Borel graph. Then $L$ is a Borel line graph if and only if it contains neither a copy of any of the 9 graphs of Beineke nor a Borel copy of $\K_0$. 
\end{Theorem}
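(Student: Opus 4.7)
The forward direction follows immediately from the observations in the introduction: being a Borel line graph is preserved under taking Borel induced subgraphs, Beineke's theorem rules out the $9$ abstract graphs, and $\K_0$ is not itself a Borel line graph (since $\chi_B(\K_0) > \aleph_0$, while by Feldman--Moore any line graph of a locally countable Borel graph has countable Borel chromatic number, and any Borel copy of $\K_0$ inside $L$ would be a Borel line graph in its own right).

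For the backward direction, the plan is to realize $L$ as $L(G)$ Borel-ly via a Borel version of Krausz's classical characterization of line graphs: $L$ is a line graph if and only if $E(L)$ admits a partition $\mathcal{F}$ into cliques such that every vertex of $L$ lies in at most two cliques of $\mathcal{F}$. Given such an $\mathcal{F}$ realized as a standard Borel space, I would build $G$ by setting $V(G) \defeq \mathcal{F} \sqcup P$, where $P$ is a standard Borel space of \emph{pendant} vertices, one for each incidence $(v,i) \in V(L) \times \{0,1\}$ where $v$ lies in strictly fewer than two cliques of $\mathcal{F}$, and identifying $V(L)$ with $E(G)$ in the natural way, so that each $v \in V(L)$ becomes the edge joining the (possibly pendant-augmented) two covering cliques of $v$. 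The resulting Borel graph $G$ satisfies $L \cong_B L(G)$.

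To construct a Borel Krausz partition $\mathcal{F}$, I would first define, for each $v \in V(L)$, a Borel partition of $N_L(v)$ into at most two \emph{sides}, corresponding classically to the two endpoints of the edge $v$ in the underlying graph. The exclusion of the $9$ Beineke subgraphs ensures that the same-side relation between neighbors of $v$ is definable by a quantifier-free local predicate on finite subsets of $N_L[v]$ (essentially: two neighbors are on the same side iff they are $L$-adjacent, with an appropriate correction for triangles ruled out by the forbidden-subgraph conditions), hence is Borel. From these local side assignments, form the auxiliary Borel graph $H$ on $Z \defeq \{(v,S) : v \in V(L),\ S \text{ a side of } v\}$ in which $(v,S) \sim_H (u,T)$ iff $u \in S$ and $v \in T$. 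The projection $(v,S) \mapsto v$ then sends each $H$-component to a clique in $L$, and the collection of these $H$-components is exactly the desired Krausz partition $\mathcal{F}$.

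The main obstacle is to show that the $H$-component equivalence relation $E_H$ is smooth, so that its quotient $Z/E_H$ is a standard Borel space and $\mathcal{F}$ genuinely lives as a Borel object. For this I would invoke the $\E_0$-dichotomy of Harrington--Kechris--Louveau: if $E_H$ restricted to some invariant Borel $Z' \subseteq Z$ were non-smooth, it would Borel-contain a copy of $\E_0$. The plan is then to translate such a copy into a Borel copy of $\K_0$ inside $L$: each $E_H$-class projects to a clique of $L$, and a Borel $\E_0$-subrelation of $E_H$ should produce a Borel family of pairwise disjoint cliques of $L$ whose union is Borel-isomorphic to $\K_0$, contradicting our hypothesis. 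The hardest technical point is that the projection $Z \to V(L)$ need not be injective on $E_H$-classes (a single vertex of $L$ can sit in two distinct cliques of $\mathcal{F}$), so obtaining a \emph{genuine} Borel copy of $\K_0$ rather than merely an abstract Borel $\E_0$-structure will require careful case analysis using the no-Beineke-subgraph assumption to rule out pathological configurations. Once smoothness of $E_H$ is established, a Borel transversal provides the standard Borel structure on $\mathcal{F}$, and the construction of $G$ proceeds as above to complete the proof.
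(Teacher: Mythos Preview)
Your overall architecture---build a Borel Krausz partition, test smoothness of the induced equivalence relation, and branch into ``construct $G$'' versus ``extract $\K_0$''---is exactly the paper's strategy. The paper phrases it in terms of a \emph{Borel line graph relation} $\sim$ on $E(L)$ rather than sides of vertices, but these are the same data: $uv \sim vw$ iff $u$ and $w$ lie on the same side of $v$. So there is no genuinely new route here; the question is whether your sketch fills in the hard steps.

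There is a real gap at the point where you assert that the same-side relation is ``definable by a quantifier-free local predicate.'' It is not. Deciding whether two common neighbours $u,w$ of $v$ lie on the same side reduces to deciding whether the triangle $uvw$ is \emph{odd} (some vertex of $L$ is adjacent to an odd number of $\{u,v,w\}$) or \emph{even}, and this requires a genuine quantifier over $V(L)$; when $L$ is not locally countable there is no Luzin--Novikov shortcut, and the resulting relation is only analytic/coanalytic, not obviously Borel. Worse, for the four \emph{singular} graphs ($K_3$, $K_4^-$, the square pyramid, the octahedron) the Krausz partition is not unique, so no local rule can exist at all---on a $K_4^-$ component the even-triangle heuristic actually outputs an invalid partition. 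The paper's Theorem~\ref{Theorem: A Borel line graph decomposition Exists} handles this by invoking Whitney's strong isomorphism theorem (uniqueness of the decomposition on nonsingular connected pieces), sandwiching the true relation between an analytic under-approximation and a coanalytic over-approximation, applying analytic separation, and then using \emph{invariant} analytic separation to split off a Borel union of components on which the resulting $R$ may fail, all of which are then finite and can be repaired by hand. Your proposal does not engage with any of this.

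The second gap is the one you flag yourself: passing from a non-smooth $E_H$ to a Borel copy of $\K_0$. You are right that the projection $Z\to V(L)$ is two-to-one, so distinct $E_H$-classes can project to overlapping cliques and the $\E_0$-dichotomy alone does not give disjointness. The paper's fix is not case analysis on Beineke subgraphs but a separate Ramsey-type ingredient: Miller's meager-avoidance theorem (Theorem~\ref{Theorem:meager}, packaged as Lemma~\ref{lemma:hom}) is used to upgrade the $\E_0$-embedding so that non-$\E_0$-related points land on edges whose endpoints are \emph{non-adjacent} in $L$, which forces the resulting cliques to be pairwise disjoint. Finally, even your ``easy'' branch---building $G$ once the partition is smooth---is substantially harder than adjoining pendants: the sets of vertices lying in $0$, $1$, or $2$ cliques are in general only analytic or coanalytic, and the paper needs five further rounds of analytic separation (\S\ref{Section: proof of Theorem smoothness implies Borel G}) to manufacture a Borel surrogate for the incidence relation before $G$ can be defined.
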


In view of Theorem~\ref{Theorem: Beineke}, the above statement is equivalent to the assertion that a Borel graph $L$ is a Borel line graph if and only if it is a line graph that does not contain a Borel copy of $\K_0$.

As an immediate consequence of Theorem~\ref{Theorem: 10 Graph Theorem}, we obtain a characterization of locally countable Borel line graphs in terms of their Borel chromatic numbers. This follows from the graph-theoretic version of the Feldman--Moore theorem \cite{FeldmanMoore} due to Kechris, Solecki, and Todorcevic \cite{KST1999BorelChromatic}, which states that locally countable Borel line graphs have countable Borel chromatic numbers.

\begin{Theorem}[{Feldman--Moore: Graph version \cite[Theorem 4.10]{KST1999BorelChromatic}}]\label{Theorem: FM}
If $L$ is a locally countable Borel line graph, then $\chi_B(L) \leq \aleph_0$. 
\end{Theorem}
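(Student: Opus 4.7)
The plan is to reduce the statement to showing that every locally countable Borel graph $G$ admits a proper Borel edge coloring with $\aleph_0$ colors, and then to obtain such a coloring via the classical Feldman--Moore theorem for countable Borel equivalence relations. Since $L$ is a Borel line graph, fix a Borel graph $G$ with $L \cong_B L(G)$. Isolated vertices of $G$ do not appear in $L(G)$, so I may delete them and assume every vertex of $G$ has degree at least $1$. For any edge $e = \{u, v\} \in E(G)$, the degree of $e$ in $L$ equals $\deg_G(u) + \deg_G(v) - 2$, so local countability of $L$, combined with $\deg_G(u), \deg_G(v) \geq 1$, forces $G$ itself to be locally countable. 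Since $V(L(G)) = E(G)$ as a standard Borel space, a proper Borel vertex coloring of $L(G)$ is precisely a proper Borel edge coloring of $G$, so it suffices to produce such an edge coloring.

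Next, let $E_G$ be the connected-component equivalence relation on $V(G)$. Local countability ensures every $E_G$-class is countable, and that $E_G = \bigcup_{n \in \N} R_n$, where $R_n$ is the Borel relation ``graph distance $\leq n$''---these are Borel by a routine induction using that each vertex has countably many neighbors---so $E_G$ is a countable Borel equivalence relation. By Feldman--Moore, there exist Borel involutions $\sigma_n \colon V(G) \to V(G)$ with $E_G = \bigcup_n \mathrm{graph}(\sigma_n)$. I then set
\[
M_n \,\defeq\, \big\{\{v, \sigma_n(v)\} \,:\, v \in V(G),\ v \neq \sigma_n(v),\ \{v, \sigma_n(v)\} \in E(G)\big\},
\]
which is Borel. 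Because $\sigma_n$ is an involution, no vertex of $G$ meets two distinct edges of $M_n$, so each $M_n$ is a Borel matching; and because the endpoints of any edge of $G$ are $E_G$-related, $\bigcup_n M_n = E(G)$.

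The coloring $c \colon E(G) \to \N$ defined by $c(e) \defeq \min\{n : e \in M_n\}$ is then a Borel proper edge coloring of $G$ with countably many colors, which, under the identification $L \cong_B L(G)$, yields the required $\chi_B(L) \leq \aleph_0$. Conceptually, the only substantive input is Feldman--Moore; once it supplies the involutions, the matching decomposition and coloring are automatic. The mildest subtlety is verifying that $E_G$ is genuinely Borel (the transitive closure of a Borel relation is only analytic in general), but local countability makes this immediate.
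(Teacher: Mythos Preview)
The paper does not prove this theorem; it is quoted from Kechris--Solecki--Todorcevic as a known result and used as a black box. Your argument is correct and is the standard derivation from the classical Feldman--Moore theorem for countable Borel equivalence relations, so there is nothing in the paper to compare it against.

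One minor point of presentation: you delete the isolated vertices of $G$ \emph{before} establishing that $G$ is locally countable, but the set of isolated vertices of a Borel graph is in general only coanalytic (cf.\ Example~\ref{Example: Analytic1}), so staying in the Borel category at that step requires justification. The cleanest fix is simply to skip the deletion: isolated vertices have degree $0$, and your identity $\deg_{L}(e) = \deg_G(u) + \deg_G(v) - 2$ already shows every non-isolated vertex has countable degree, so $G$ is locally countable as given. Once that is known, Luzin--Novikov makes the set of isolated vertices Borel a posteriori, but by then you no longer need to remove them.
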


As mentioned above, $\chi_B({\mathbb{K}_0}) > \aleph_0$, and thus Theorems~\ref{Theorem: Beineke} and \ref{Theorem: 10 Graph Theorem} imply the following partial converse to the Feldman--Moore theorem:

\begin{Corollary}\label{Corollary: converse to Feldman Moore} Let $L$ be a locally countable Borel graph. If $L$ is a line graph and $\chi_B(L) \leq \aleph_0$,
    then $L$ is a Borel line graph.     \end{Corollary}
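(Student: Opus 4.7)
The plan is to deduce the corollary directly from Theorem~\ref{Theorem: 10 Graph Theorem} by verifying the two conditions on $L$ listed there: namely, that $L$ contains no copy of any of the 9 Beineke graphs and no Borel copy of $\K_0$. The first condition is immediate from Beineke's theorem (Theorem~\ref{Theorem: Beineke}) applied to $L$, since we are assuming $L$ is a line graph. So the only real content is to rule out the presence of a Borel copy of $\K_0$ inside $L$.

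For this, I would argue by contradiction: suppose $L$ contains a Borel copy of $\K_0$, i.e., there is a Borel set $U \subseteq V(L)$ with $L[U] \cong_B \K_0$. Since $\chi_B(L) \leq \aleph_0$, there is a Borel proper coloring $f \colon V(L) \to X$ with $X$ countable. Its restriction $f|_U$ is a Borel proper coloring of $L[U]$, and transporting along the Borel isomorphism $L[U] \cong_B \K_0$ yields a Borel proper coloring of $\K_0$ with countably many colors. This contradicts the standard Baire category fact (cited in the introduction) that $\chi_B(\K_0) > \aleph_0$. Hence $L$ has no Borel copy of $\K_0$, and Theorem~\ref{Theorem: 10 Graph Theorem} then yields that $L$ is a Borel line graph.

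There is essentially no obstacle here: the corollary is an immediate packaging of Theorems~\ref{Theorem: Beineke} and~\ref{Theorem: 10 Graph Theorem} together with the well-known lower bound $\chi_B(\K_0) > \aleph_0$. Note that local countability of $L$ is not actually used in this deduction; it only enters in the converse direction via Theorem~\ref{Theorem: FM}, which is what justifies calling the statement a partial converse to Feldman--Moore.
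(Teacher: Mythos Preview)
Your proposal is correct and matches the paper's own reasoning exactly: the paper simply states that the corollary follows from Theorems~\ref{Theorem: Beineke} and~\ref{Theorem: 10 Graph Theorem} together with the fact that $\chi_B(\K_0) > \aleph_0$, which is precisely the argument you give. Your observation that local countability is not needed for this direction is also accurate.
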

    
Theorem \ref{Theorem: 10 Graph Theorem} allows the Feldman--Moore theorem to be stated in terms of forbidden subgraphs: 

\begin{Corollary}\label{Corollary: FM version 2}
  If $L$ is a locally countable Borel graph that contains neither a copy of any of the 9 graphs of Beineke nor a Borel copy of $\mathbb{K}_0$, then $\chi_{B}(L) \leq \aleph_0$.   
\end{Corollary}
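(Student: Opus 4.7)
The plan is to derive Corollary \ref{Corollary: FM version 2} as a direct chain of three prior results: Beineke's theorem (Theorem \ref{Theorem: Beineke}), the main result (Theorem \ref{Theorem: 10 Graph Theorem}), and the graph-theoretic Feldman--Moore theorem (Theorem \ref{Theorem: FM}). Each of these has already been established (or is stated as a black box) in the excerpt, so no new combinatorial or descriptive set-theoretic work is needed.

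First I would invoke Theorem \ref{Theorem: Beineke}: since $L$ contains no induced copy of any of the 9 graphs of Beineke, $L$ is a line graph in the purely combinatorial sense. Next, combining this with the assumption that $L$ is a Borel graph containing no Borel copy of $\K_0$, Theorem \ref{Theorem: 10 Graph Theorem} upgrades this to the conclusion that $L$ is a \emph{Borel} line graph. Finally, since $L$ is also assumed to be locally countable, Theorem \ref{Theorem: FM} gives $\chi_B(L) \leq \aleph_0$, which is exactly what needs to be proved.

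Since the corollary is purely a formal composition of results already available, there is no real obstacle. The only thing to be a bit careful about is that Theorem \ref{Theorem: 10 Graph Theorem} is stated for arbitrary Borel graphs (not just locally countable ones), so local countability is only used at the last step, when applying the Feldman--Moore theorem; one should make this explicit so the reader sees exactly where each hypothesis is used.
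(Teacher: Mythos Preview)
Your proposal is correct and matches the paper's intended argument: the corollary is stated immediately after Theorem~\ref{Theorem: 10 Graph Theorem} as a direct consequence of it together with the Feldman--Moore theorem (Theorem~\ref{Theorem: FM}). The only minor remark is that your explicit invocation of Beineke's theorem is redundant, since the hypotheses of Theorem~\ref{Theorem: 10 Graph Theorem} already include the absence of the 9 Beineke graphs; you can go straight from the assumptions to ``$L$ is a Borel line graph'' via Theorem~\ref{Theorem: 10 Graph Theorem} in one step.
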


An intriguing question is whether the hypotheses of Corollary \ref{Corollary: FM version 2} can be weakened. For instance, it would be interesting to know if forbidding only some of the 9 graphs of Beineke together with $\K_0$ is enough to reach the same conclusion. This line of inquiry can be naturally viewed as an extension to the Borel setting of the theory of $\chi$-boundedness \cite{chiBoundedSurvey}, which aims to bound the chromatic number of a graph with certain forbidden substructures by a function of its clique number. More broadly, our work indicates the prospect of fruitful interactions between descriptive set theory and \emph{structural} (as opposed to extremal or probabilistic) graph theory and leads to general problems such as what other natural classes of Borel graphs can be characterized by means of excluding certain Borel induced subgraphs.

The remainder of the paper is organized as follows. In \S\ref{Section: prelims}, we review some necessary background and tools from descriptive set theory. A reader familiar with descriptive set theory may proceed directly to \S\ref{section: proof outline}, where we provide a road map for the proof of Theorem \ref{Theorem: 10 Graph Theorem}. The rest of the paper contains  proofs of the intermediate theorems and lemmas needed in the proof of Theorem \ref{Theorem: 10 Graph Theorem}.

\section{Tools from descriptive set theory}\label{Section: prelims} 

In this section, we provide some fundamental tools from descriptive set theory and the study of Borel equivalence relations. Our main references for descriptive set theory are \cite{kechris2012classicalDescriptiveSetTheory,AnushDST}, and the reader is invited to consult them for any background not mentioned here.

A \emphd{standard Borel space} is a set $X$ equipped with a $\sigma$-algebra $\mathfrak{B}(X)$ (called \emphd{Borel subsets}) that coincides with the Borel $\sigma$-algebra generated by some Polish (i.e., separable completely metrizable) topology on $X$. All uncountable standard Borel spaces are isomorphic \cite[Theorem 15.6]{kechris2012classicalDescriptiveSetTheory}, so there is usually no loss of generality in assuming that $X$ is some specific space such as $\R$ or the Cantor space $\mathcal{C}$. If $X$ is a standard Borel space and $A \subseteq X$ is a Borel set, then $A$ equipped with the $\sigma$-algebra $\set{B \cap A \,:\, B \in \mathfrak{B}(X)}$ is also a standard Borel space \cite[Corollary 13.4]{kechris2012classicalDescriptiveSetTheory}. A function $f \colon X \to Y$ between two standard Borel spaces is \emphd{Borel} if for every Borel set $A \subseteq Y$, its preimage $f^{-1}(A)$ is a Borel subset of $X$. Equivalently, a function $f \colon X \to Y$ is Borel if and only if $\mathsf{graph}(f) \defeq \set{(x,y) \,:\, f(x) = y}$ is a Borel subset of $X \times Y$ \cite[Theorem 14.12]{kechris2012classicalDescriptiveSetTheory}.

It is often convenient to describe a subset $B \subseteq X$ via a statement 
        $P(x)$ with one free variable $x$ such that $B = \set{x \in X \,:\, P(x)}$. To verify such a set $B$ is Borel, we will usually not explicitly write its definition out set-theoretically, but instead rely on the form of the statement $P(x)$ itself, keeping in mind that conjunctions and universal quantifiers (resp.~disjunctions and existential quantifiers) in $P(x)$ correspond to intersections (resp.~unions) in the construction of $B$, while negations correspond to complements. 

    Let $X$ be a standard Borel space. The \emphd{diagonal} of $X$ is the set \[\Delta(X) \,\defeq\, \set{(x,x) \,:\, x \in X} \,\subseteq\, X^2.\] Note that $\Delta(X)$ is a Borel subset of $X^2$ (it is the graph of the identity function on $X$). Now consider the map $\mathsf{pair} \colon X^2 \setminus \Delta(X) \to [X]^2$ given by $\mathsf{pair}(x,y) \defeq \set{x,y}$. We endow $[X]^2$ with the $\sigma$-algebra
    \[
        \mathfrak{B}([X]^2) \,\defeq\, \set{A \subseteq [X]^2 \,:\, \mathsf{pair}^{-1}(A) \in \mathfrak{B}(X^2)}.
    \]
    This makes $[X]^2$ a standard Borel space \cite[Example 6.1 and Proposition 6.3]{KechrisMiller}. By construction, the function $\mathsf{pair} \colon X^2 \setminus \Delta(X) \to [X]^2$ is Borel.


\begin{Definition}[Analytic and coanalytic sets]
    Let $X$ be a standard Borel space. A set $A \subseteq X$ is \emphd{analytic} if there exist a standard Borel space $Y$ and a Borel function $f : Y \to X$ such that $f(Y) = A$. Equivalently, $A$ is analytic if there exist a standard Borel space $Y$ and a Borel set $B \subseteq X \times Y$ such that $x \in A \iff \exists\, y \in Y\, ((x,y) \in B)$. A set $A \subseteq X$ is \emphd{coanalytic} if its complement is analytic.
\end{Definition}

In practice, to show a set $A$ is analytic we will typically write 
\[x \in A \iff \exists\, y \in Y\, (P(x,y)),\]
where $P(x,y)$ is a statement with two free variables such that $P(x,y)$ holds if and only if $(x,y) \in B$ for some Borel set $B \subseteq X \times Y$. Verifying that 
$P(x,y)$ really does correspond to a Borel set will often be routine and left to the reader. Similarly, a set $A \subseteq X$ is coanalytic when
\[x \in A \iff \forall\, y \in Y\, (P(x,y)),\]
where $P(x,y)$ describes a Borel subset of $X \times Y$. We give an example of a proof of this type below; in the sequel, equally straightforward arguments will be omitted.
\begin{Example}\label{Example: Analytic1}
    For a Borel graph $G$, the set $I \subseteq V(G)$ of all isolated vertices is coanalytic. Indeed,
    \[
        x \in I  \iff   \forall\, y \in V(G)\, (xy \notin E(G)).
    \]
    To see that the set $\set{(x,y) \in V(G)^2 \,:\, xy \notin E(G)}$ is Borel, we observe that it is equal to
    \[
        V(G)^2 \setminus \mathsf{pair}^{-1}(E(G)).
    \]
    It follows that the set $I$ is coanalytic.
\end{Example}

    The family of all analytic subsets of a standard Borel space is closed under countable unions and intersections, and the same is true for the family of all coanalytic subsets \cite[Proposition 14.4]{kechris2012classicalDescriptiveSetTheory}.

\begin{Example}\label{Example: Analytic2}
    We argue that for a Borel graph $G$, the equivalence relation $\equiv_G$ whose classes are the connected components of $G$ is analytic. Indeed, we have
    \begin{align*}
        x \equiv_G y  \iff \exists\, d \in \N, \, \exists\, &(u_0, \ldots, u_d) \in V(G)^{d+1}\\
        &\big(x = u_0, \, u_0u_1 \in E(G), \, \ldots, \, u_{d-1}u_d \in E(G),\, u_d = y\big).
    \end{align*}
    This means that we can write ${\equiv_G} = \bigcup_{d \in \N} S_d$, where
    \begin{align*}
        (x,y) \in S_d  \iff  \exists\, &(u_0, \ldots, u_d) \in V(G)^{d+1}\\
        &\big(x = u_0, \, u_0u_1 \in E(G), \, \ldots, \, u_{d-1}u_d \in E(G),\, u_d = y\big).
    \end{align*}
    Since $E(G)$ is Borel, we see that each set $S_d$ is analytic, and hence their union is analytic as well.
\end{Example}

    It should be noted that there exist analytic sets that are not Borel. This follows from a diagnolization argument originally given by Suslin, see \cite[Theorem 14.2]{kechris2012classicalDescriptiveSetTheory}. We now present a classical result of Luzin and Novikov that provides a sufficient condition for an analytic set to be Borel. A proof can be found in \cite[Theorem 18.10]{kechris2012classicalDescriptiveSetTheory}
or \cite[Theorem 32]{miller2012graphTheoreticApproachToDescriptiveSetTheory}.

\begin{Theorem}[Luzin--Novikov]\label{Theorem: Luzin--Novikov} Let $X$ and $Y$ be standard Borel spaces and let $B \subseteq X \times Y$ be a Borel set. If for every $x \in X$, the set
$\{y \in Y : (x,y) \in B\}$ is countable, then 
\[\{x \in X \,:\, \exists\, y \in Y \, ((x,y) \in B)\}\]
is a Borel subset of $X$.
\end{Theorem}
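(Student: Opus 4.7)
The plan is to reduce to the case of a continuous map with countable fibers between Polish spaces, and then decompose the domain into countably many Borel pieces on which the map is injective. First, by Kuratowski's topology refinement, one can endow $B$ with a Polish topology $\tau_B$ that generates the same Borel $\sigma$-algebra as the subspace Borel structure inherited from $X \times Y$, and such that the projection $\pi \colon B \to X$ is continuous (refine a Polish topology on $X \times Y$ so as to make $B$ clopen, then restrict). Setting $P \defeq (B, \tau_B)$ and $f \defeq \pi|_B$, it suffices to show that for any continuous map $f \colon P \to X$ between Polish spaces with countable fibers, the image $f(P)$ is Borel.

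The core of the argument is producing a Borel partition $P = \bigsqcup_{n \in \N} P_n$ on which $f|_{P_n}$ is injective. Granted such a partition, each $f(P_n)$ is Borel by the Luzin--Suslin theorem (a Borel injection from a standard Borel space has Borel image), and hence so is $f(P) = \bigcup_n f(P_n)$. To construct the partition, fix a countable basis $(V_n)_{n \in \N}$ of $P$. For each $x \in P$, countability of $f^{-1}(f(x))$ together with the Hausdorff property yields some $V_n \ni x$ with $V_n \cap f^{-1}(f(x)) = \{x\}$; one then places $x$ in $P_n$ for the least such $n$, so that $f|_{P_n}$ is manifestly injective.

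The main obstacle is Borelness of the pieces $P_n$: the defining condition $\forall y \in V_n \setminus \{x\},\, f(y) \neq f(x)$ is a priori only coanalytic, and showing it is in fact Borel appears to require precisely Luzin--Novikov itself. I would break this circularity by strengthening the target and proving, by transfinite induction on the Borel rank of $B$, that $B$ admits a decomposition as a countable union of graphs of Borel partial functions $X \rightharpoonup Y$ (from which the theorem is immediate, the projection being the countable union of the Borel domains). The base case is closed $B$ in a Polish space, where the decomposition is built directly from the countable basis via a separation argument exploiting the closed-set structure; the inductive step uses the change-of-topology machinery to absorb the added Borel complexity into a finer Polish topology on $X \times Y$ while preserving the ambient Borel $\sigma$-algebra, reducing each case to the closed one already handled.
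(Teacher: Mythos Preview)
The paper does not supply its own proof of this theorem; it is quoted as background with a pointer to \cite[Theorem~18.10]{kechris2012classicalDescriptiveSetTheory} and \cite[Theorem~32]{miller2012graphTheoreticApproachToDescriptiveSetTheory}. So there is nothing in the paper to compare your argument against, but let me comment on the outline itself.

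Your initial reduction (refine the topology so that $B$ is Polish and the projection is continuous with countable fibers) is standard and fine, and the target statement ``$B$ is a countable union of Borel graphs'' is exactly the strong form of Luzin--Novikov proved in Kechris. The gap is earlier than you locate it, though. You assert that for each $x \in P$ there is a basic open $V_n \ni x$ with $V_n \cap f^{-1}(f(x)) = \{x\}$, citing countability of the fiber and Hausdorffness. This is false: the fiber is a countable closed subset of a Polish space, but such a set can have non-isolated points (take $\{0\} \cup \{1/n : n \geq 1\} \subseteq \R$; no basic open set isolates $0$). So your pieces $P_n$ do not cover $P$. The obstacle is not merely that the $P_n$ might fail to be Borel---some points of $P$ are never assigned to any $P_n$ at all.

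Your fallback plan (induct on Borel rank, reduce to closed $B$, handle that directly) has the right shape, and the change-of-topology step for the induction is fine. But the closed base case is where the real content lives, and ``a separation argument exploiting the closed-set structure'' does not describe a proof; if it is meant to be the same $V_n$ construction, it fails for the same reason. What actually makes the closed case go is that each section $B_x$, being closed and countable in a Polish space, is \emph{scattered}: one shows that the set of fiberwise isolated points of a closed $B$ is Borel and decomposes as a countable union of Borel graphs, then iterates the Cantor--Bendixson derivative on the remainder, with a boundedness argument to cap the transfinite recursion at a countable ordinal uniformly in $x$. None of that machinery is present in your sketch, and it is the heart of the theorem.
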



\begin{Example}\label{Example: Luzin--Novikov}
    Thanks to  the Luzin--Novikov theorem, many combinatorial constructions on locally countable Borel graphs can be shown to result in Borel sets. For instance, as in Example~\ref{Example: Analytic1}, let $I \subseteq V(G)$ be the set of all isolated vertices of $G$. Then
    \[
        x \in I^c  \iff   \exists\, y \in V(G)\, (xy \in E(G)).
    \]
    If $G$ is a locally countable Borel graph, then for each $x \in V(G)$, the set $\set{y \in V(G) \,:\, xy \in E(G)}$ is countable, and hence, by the Luzin--Novikov theorem, the set $I^c$ is Borel. It follows that the set of all isolated vertices in a locally countable Borel graph is Borel. A similar argument shows that for a locally countable Borel graph $G$, the relation $\equiv_G$ defined in Example~\ref{Example: Analytic2} is Borel.
\end{Example}

Another classical theorem of Suslin allows separating two analytic sets by a Borel set.  A proof can be found in \cite[Theorem 14.7]{kechris2012classicalDescriptiveSetTheory}.     
\begin{Theorem}[Analytic separation] \label{Theorem: Analytic Separation}
Let $A_1$ and $A_2$ be disjoint analytic subsets of a standard Borel space $X$. Then there exists a Borel set $B \subseteq X$ such that $A_1 \subseteq B \subseteq A_2^c$.
\end{Theorem}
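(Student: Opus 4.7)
The plan is to adapt the classical tree-fusion argument. Throughout, call two sets $P, Q \subseteq X$ \emph{Borel-separable} if there exists a Borel set $B \subseteq X$ with $P \subseteq B$ and $Q \cap B = \emptyset$. The goal reduces to showing that disjoint analytic sets are Borel-separable. Before building the tree, I would first establish the following countable additivity principle: if $P = \bigcup_{n \in \N} P_n$ and $Q = \bigcup_{m \in \N} Q_m$, and for every pair $(n,m)$ the sets $P_n$ and $Q_m$ are Borel-separated by some $B_{n,m}$, then $P$ and $Q$ are themselves Borel-separated by $B \defeq \bigcup_n \bigcap_m B_{n,m}$. The contrapositive is the workhorse of the proof: whenever $P$ and $Q$ are \emph{not} Borel-separable, some $P_n$ fails to be Borel-separable from some $Q_m$.

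Next I would represent the analytic sets via trees on $\N$. Using the definition of analyticity (together with the fact that the witnessing standard Borel space $Y$ may be replaced by Baire space $\mathcal{N} \defeq \N^{\N}$), fix continuous surjections $f_1 \colon \mathcal{N} \to A_1$ and $f_2 \colon \mathcal{N} \to A_2$ with respect to a compatible Polish topology on $X$; the case $A_1 = \emptyset$ or $A_2 = \emptyset$ is trivial. For a finite sequence $s \in \N^{<\N}$, write $N_s \subseteq \mathcal{N}$ for the basic clopen set of extensions of $s$, and set $A_i^s \defeq f_i(N_s)$, which is analytic. Suppose toward contradiction that $A_1$ and $A_2$ are not Borel-separable. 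Since $A_i = \bigcup_{n \in \N} A_i^{(n)}$, the reduction lemma applied iteratively would produce sequences $x = (n_0, n_1, \ldots)$ and $y = (m_0, m_1, \ldots)$ in $\mathcal{N}$ such that for every $k \in \N$, the analytic sets $A_1^{x \restriction k}$ and $A_2^{y \restriction k}$ are not Borel-separable.

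Finally I would extract the contradiction from the points $a \defeq f_1(x) \in A_1$ and $b \defeq f_2(y) \in A_2$. Disjointness of $A_1$ and $A_2$ forces $a \neq b$, so in the underlying Polish topology on $X$ there exist disjoint open (hence Borel) sets $U \ni a$ and $V \ni b$. By continuity of $f_1$ and $f_2$, there is a large enough $k$ for which $A_1^{x \restriction k} \subseteq U$ and $A_2^{y \restriction k} \subseteq V$, witnessing that these two sets \emph{are} Borel-separable and contradicting the construction. This contradiction shows that $A_1$ and $A_2$ must be Borel-separable, which is exactly the desired conclusion.

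The main technical hurdle is the recursive construction of the branches $x, y$: one must invoke the countable-union principle carefully at every level of the tree and verify that non-separability is genuinely inherited by a suitable child pair. Cast appropriately, this is a routine induction once the reduction lemma is in hand; the remaining ingredient, namely parametrizing analytic sets as continuous images of $\mathcal{N}$, is standard and used without further comment elsewhere in the paper.
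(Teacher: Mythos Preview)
Your argument is correct and is precisely the classical Luzin separation proof. Note, however, that the paper does not supply its own proof of this theorem: it simply cites \cite[Theorem 14.7]{kechris2012classicalDescriptiveSetTheory}, where the very argument you sketch (the countable-union reduction lemma followed by the tree-fusion construction of branches in $\mathcal{N}$) is presented.
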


An immediate corollary of Theorem~\ref{Theorem: Analytic Separation} is that a set that is both analytic and coanalytic must be Borel, since it can be separated from its complement by a Borel set.

Given a set $X$ and an equivalence relation $E$ on $X$, we say a set $A \subseteq X$ is \emphd{$E$-invariant} if no element of $A$ is related by $E$
 to an element of $A^c$. The following result is 
 \cite[Exercise 14.14]{kechris2012classicalDescriptiveSetTheory}; we provide a proof for completeness.

\begin{Lemma}[Invariant analytic separation]\label{Lemma: invariant analytic separation}
Let $X$ be a standard Borel space and let $E$ be an analytic equivalence relation on $X$. 
Suppose $Y$, $Z \subseteq X$ are analytic sets such that no element of $Y$ is $E$-related to an element of $Z$.
Then there is an $E$-invariant Borel set $B \subseteq X$ such that $Y \subseteq B \subseteq Z^c$. 
\end{Lemma}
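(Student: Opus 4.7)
The plan is to first pass from $Y$ and $Z$ to their $E$-saturations, which by hypothesis remain disjoint, and then to build an $E$-invariant separating set as a decreasing intersection of Borel sets obtained by iterated applications of Theorem \ref{Theorem: Analytic Separation}. For a set $A \subseteq X$, write $[A]_E \defeq \{x \in X : \exists\, a \in A\,(x\,E\,a)\}$ for its $E$-saturation; if $A$ is analytic, then so is $[A]_E$, since it is the projection onto the first coordinate of the analytic set $E \cap (X \times A)$. By the symmetry and transitivity of $E$ together with the hypothesis that no element of $Y$ is $E$-related to an element of $Z$, the saturations $[Y]_E$ and $[Z]_E$ are disjoint analytic $E$-invariant sets. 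It therefore suffices to construct an $E$-invariant Borel set $B$ with $[Y]_E \subseteq B \subseteq X \setminus [Z]_E$.

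The core of the proof will be the inductive construction of a decreasing sequence of Borel sets $B_0 \supseteq B_1 \supseteq B_2 \supseteq \cdots$ satisfying
\[ [Y]_E \,\subseteq\, B_n \,\subseteq\, X \setminus [Z]_E \qquad \text{and} \qquad [X \setminus B_n]_E \,\cap\, B_{n+1} \,=\, \0. \]
For the base case, apply analytic separation to the disjoint analytic sets $[Y]_E$ and $[Z]_E$ to obtain $B_0$. For the inductive step, observe that $[X \setminus B_n]_E$ is analytic, and is disjoint from $[Y]_E$ because $[Y]_E$ is $E$-invariant and contained in $B_n$; a further application of Theorem \ref{Theorem: Analytic Separation} then produces a Borel set $B_{n+1}$ with $[Y]_E \subseteq B_{n+1} \subseteq X \setminus [X \setminus B_n]_E \subseteq B_n$.

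The desired set is $B \defeq \bigcap_{n \in \N} B_n$. It is Borel as a countable intersection of Borel sets, and it satisfies $Y \subseteq [Y]_E \subseteq B \subseteq X \setminus [Z]_E \subseteq Z^c$. To verify $E$-invariance, suppose $x \in B$ and $x\,E\,y$; if $y \notin B$, then $y \in X \setminus B_n$ for some $n$, whence $x \in [X \setminus B_n]_E$, contradicting $x \in B_{n+1}$. I do not expect a substantive obstacle in this argument; the only point requiring care is maintaining the twin invariants across the induction, so that the intersection simultaneously retains Borelness, the sandwich $[Y]_E \subseteq B \subseteq X \setminus [Z]_E$, and invariance under the analytic equivalence relation.
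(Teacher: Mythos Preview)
Your proof is correct and follows essentially the same strategy as the paper's: saturate $Y$ and $Z$, then iterate analytic separation countably many times to manufacture $E$-invariance. The only cosmetic difference is that the paper builds an \emph{increasing} sequence $B_0 \subseteq B_1 \subseteq \cdots$ with $[B_n]_E \subseteq B_{n+1}$ and takes $B = \bigcup_n B_n$, whereas you build the dual \emph{decreasing} sequence by saturating the complements and intersecting; the two constructions are complements of one another (with the roles of $Y$ and $Z$ swapped), so this is the same argument viewed from the other side.
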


\begin{proof}
    Given $A \subseteq X$, let $[A]_E$ be the \emphd{$E$-saturation} of $A$, i.e.,
\[[A]_E \,\defeq\, \{x \in X \,\colon\, \exists\, a \in A\, (x\, E\, a)\}.\]
Observe that the set $[A]_E$ is $E$-invariant; furthermore, since $E$ is analytic, if $A$ is analytic, then $[A]_E$ is analytic as well. Upon replacing $Y$ by $[Y]_E$ and $Z$ by $[Z]_E$, we may assume that $Y$ and $Z$ are $E$-invariant and disjoint.

We will now inductively define an increasing sequence of Borel sets $(B_i)_{i \in \N}$ such that $Y \subseteq B_i \subseteq Z^c$ and $[B_i]_E \subseteq B_{i+1}$.
We do so as follows: by analytic separation, there exists a Borel set $B_0$ such that $Y \subseteq B_0 \subseteq Z^c$. Now let $B_i$ be Borel with $Y \subseteq B_i \subseteq Z^c$. Since $B$ is Borel, it follows $[B_i]_E$ is analytic; furthermore, as $B_i \subseteq Z^c$ and $Z$ is $E$-invariant, it follows $[B_i]_E \subseteq Z^c$. Thus by analytic separation there exists Borel $B_{i+1}$ with $[B_i]_E \subseteq B_{i+1} \subseteq Z^c$. This completes the inductive construction.

Let $B \defeq \bigcup_{n \in \N}B_n$. Clearly $B$ is Borel and $Y \subseteq B \subseteq Z^c$. Finally, since $B = \bigcup_{n \in \N} [B_n]_E$, the set $B$ is $E$-invariant, as desired. 
\end{proof}

An important role in our arguments will be played by the following special class of equivalence relations on standard Borel spaces:

\begin{Definition}[Smoothness]\label{Definition: smooth}
    Let $E$ be an equivalence relation on a standard Borel space $X$. Then $E$ is \emphdef{smooth} if there exist a standard Borel space $Y$ and Borel function $f : X \to Y$ such that for all $x$, $y \in X$ we have $x\, E \, y \iff f(x) = f(y)$. We say $f$ \emphd{witnesses the smoothness} of $E$.
\end{Definition}

Note that a smooth equivalence relation on a standard Borel space is automatically Borel. Since all uncountable standard Borel spaces are isomorphic \cite[Theorem 15.6]{kechris2012classicalDescriptiveSetTheory}, we may, without loss of generality, use $\R$ as the codomain of $f$ in Definition~\ref{Definition: smooth} for concreteness.

Recall the equivalence relation $\E_0$ discussed in \S\ref{Section: Intro}. Harrington, Kechris, and Louveau showed that not only is $\E_0$ nonsmooth, but it is a ``smallest'' nonsmooth equivalence relation \cite{HarringtonKechrisLouveau1990Glimm-EffrosE0Dichotomy}. This result is known as the $\E_0$-dichotomy. While the original proof due to Harrington, Kechris, and Louveau uses methods of effective descriptive set theory, a classical, graph-theoretic proof was given by Miller \cite[Theorem 26]{miller2012graphTheoreticApproachToDescriptiveSetTheory}. 



\begin{Theorem}[$\E_0$-dichotomy]\label{Theorem: E0 dichotomy}
   Let $X$ be a standard Borel space and let $E$ be a Borel equivalence relation on $X$. Then exactly one of the following holds:
    \begin{enumerate}[label=\ep{\normalfont\arabic*}]
        \item $E$ is smooth, or
        \item there exists a Borel embedding from $\E_0$ to $E$; that is, there is an injective Borel function $f : \mathcal{C} \to X$ such that $\alpha\, \E_0 \, \beta \iff f(\alpha)\, E\, f(\beta)$.
    \end{enumerate}
\end{Theorem}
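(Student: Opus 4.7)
First I would verify that the alternatives are mutually exclusive. If $E$ is smooth via a Borel $f \colon X \to \R$ and $g \colon \mathcal{C} \to X$ is a Borel embedding of $\E_0$ into $E$, then $f \circ g$ would witness smoothness of $\E_0$. But $\E_0$ is non-smooth by a standard Baire-category argument: any Borel $\E_0$-invariant map $\mathcal{C} \to \R$ is constant on a comeager set, exploiting that the shifts by eventually-zero binary sequences act by category-preserving homeomorphisms whose orbits are exactly the $\E_0$-classes, hence the preimage of any point is either meager or comeager.

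For the substantive direction, assume $E$ is non-smooth; the plan is to produce the embedding $f \colon \mathcal{C} \to X$ by a Cantor-scheme construction. I would build nonempty Borel sets $(A_s)_{s \in 2^{<\N}}$ shrinking to points in a compatible complete metric on $X$, together with Borel partial injections $\phi_n \colon X \pto X$ whose graphs lie in $E$, such that $\phi_n$ bijects $A_{s}$ onto $A_{s'}$ whenever $s, s' \in 2^{n+1}$ agree except in their last bit. Defining $\{f(\alpha)\} \defeq \bigcap_n A_{\alpha \restriction n}$ then gives a continuous injection; if $\alpha\, \E_0\, \beta$ agree beyond position $n$, a finite composition of $\phi_0^{\pm 1}, \ldots, \phi_{n-1}^{\pm 1}$ carries $f(\alpha)$ to $f(\beta)$, yielding $f(\alpha)\, E\, f(\beta)$. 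The converse implication requires ensuring at every stage that $E$ restricted to $A_s$ is still non-smooth, so that no extraneous $E$-relations can arise between points from disjoint branches.

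The main obstacle is the inductive step: given $A_s$ on which $E$ remains non-smooth, find disjoint Borel $A_{s \frown 0}, A_{s \frown 1} \subseteq A_s$ and a Borel partial injection with graph in $E$ pairing them up, while preserving non-smoothness below each branch. In the locally countable case this can be handled via the Luzin--Novikov theorem, which supplies countably many Borel selectors enumerating $E$-classes, combined with invariant analytic separation (Lemma~\ref{Lemma: invariant analytic separation}) applied to sets witnessing the failure of smoothness. For general Borel $E$, this is the genuinely hard part, and the cleanest route---following Miller's graph-theoretic approach cited in the excerpt---is to pass to the Gandy--Harrington topology on a suitable set of $\Sigma^1_1$ points, where $\Sigma^1_1$ sets form a basis for a Baire topology in which non-smoothness forces the existence of two distinct $\Sigma^1_1$ points in the same $E$-class; these can then be separated into the two branches and joined by a partial continuous map extracted from the $\Sigma^1_1$ structure of $E$ itself. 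Iterating and taking a decreasing system of such pairings produces the desired Cantor scheme, and the resulting $f$ is continuous (hence Borel) and embeds $\E_0$ into $E$.
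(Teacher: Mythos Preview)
The paper does not prove Theorem~\ref{Theorem: E0 dichotomy}; it is quoted as a known result, with citations to the original effective-descriptive-set-theory proof of Harrington, Kechris, and Louveau and to Miller's classical graph-theoretic proof. There is therefore no proof in the paper to compare your proposal against.

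As for your sketch itself: the overall architecture (mutual exclusivity via non-smoothness of $\E_0$, then a Cantor scheme producing the embedding, with the Gandy--Harrington topology invoked for the general Borel case) is the standard one and is broadly correct in spirit. That said, the inductive step is where all the content lies, and your description of it is too vague to count as a proof. In particular, ``non-smoothness forces the existence of two distinct $\Sigma^1_1$ points in the same $E$-class'' is not quite the right formulation, and the passage from this to a partial injection pairing two non-smooth pieces, while simultaneously maintaining non-smoothness on each branch \emph{and} ensuring that points on distinct branches are $E$-inequivalent, requires substantially more care than you indicate. If you intend to actually supply a proof rather than a pointer, you would need to spell out precisely which $\Sigma^1_1$ sets you place at the nodes of the scheme, how the Baire property of the Gandy--Harrington topology is used at each splitting, and how the anti-reduction condition (the backward implication $f(\alpha)\,E\,f(\beta) \Rightarrow \alpha\,\E_0\,\beta$) is secured. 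As written, your proposal is a reasonable road map but not a proof; given that the paper treats the result as a citation, that may be all that is needed here.
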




\section{Outline of the proof of Theorem \ref{Theorem: 10 Graph Theorem}}\label{section: proof outline}

\subsection{Line graph decompositions and line graph relations}

 An important role in our arguments in played by a characterization of line graphs via a partition of their edges into cliques, which we call a {line graph decomposition}. 

\begin{Definition}[Line graph decompositions]\label{Def: line graph decomposition}
A \emphdef{line graph decomposition} of a graph $L$ is a collection $\mathscr{C}$ of nonempty subsets of $V(L)$ such that:
\begin{itemize}
    \item
        for all $C \in \mathscr{C}$, $L[C]$ is a clique;
    \item
        the sets $E(L[C])$, $C \in \mathscr{C}$ are pairwise disjoint and their union is $E(L)$; 
    \item each non-isolated vertex of $L$ is contained in exactly two sets in $\mathscr{C}$; each isolated vertex is contained in exactly one set in $\mathscr{C}$.
\end{itemize}
\end{Definition}

Note that any two sets in a line graph decomposition have at most one common vertex. Krausz observed that a graph $L$ is a line graph if and only if $L$ has a line graph decomposition \cite{krausz1943}. Indeed, 
if $L = L(G)$ for some graph $G$, then $\set{\set{e \in E(G) \,:\, e \ni v} \,:\, v \in V(G)}$ is a line graph decomposition of $L$. Conversely, given a line graph decomposition $\mathscr{C}$ of a graph $L$ without isolated vertices, one can form a graph $G$ such that $L(G) \cong L$ as follows:
    \begin{align}
        V(G) \,\defeq\, \mathscr{C},\quad\quad \label{Eq: line graph decomposition}
        E(G) \,\defeq\, \{\{C, C'\} \in [\mathscr{C}]^2 : C \cap C' \neq \0\}.
    \end{align}
    Here an isomorphism $\phi: E(G) \to V(L)$ is given by letting $\phi(\{C, C'\})$ for each edge $\set{C,C'} \in E(G)$ be the unique vertex in $C \cap C'$.
If $L$ has isolated vertices, we simply add to $G$ an isolated edge corresponding to each isolated vertex of $L$. 

Since a line graph decomposition of $L$ induces a partition of the edge set of $L$, we can use it to define an equivalence relation on $E(L)$, called the line graph relation:

\begin{Definition}[Line graph relations]
Let $\mathscr{C}$ be a line graph decomposition of a graph $L$. The equivalence relation
\[{\sim_{\mathscr{C}}} \,\defeq\, \{(e, e') \in E(L)^2 \,:\, \exists\, C \in \mathscr{C}\, \big(\{e, e'\} \subseteq E(L[C])\big)\}\]
on $E(L)$ is called \emphdef{the line graph relation} on $L$ under $\mathscr{C}$. An equivalence relation $\sim$ on $E(L)$ is called \emphd{a line graph relation} if there is a line graph decomposition $\mathscr{C}$ of $L$ such that ${\sim} = {\sim_\mathscr{C}}$.
\end{Definition}

Note that if $\sim$ is a line graph relation on $L$, then each $\sim$-class is the edge set of a clique in $L$. Furthermore, the following combinatorial characterization of line graph relations is an immediate consequence of the above definitions:

\begin{Lemma}[{\cite[130]{Beineke}}]\label{lemma: equiv relation is line graph decomp relation}
    Let $L$ be a graph. An equivalence relation $\sim$ on $E(L)$ is a line graph relation if and only if each $\sim$-equivalence class is the edge set of a clique in $L$ and every vertex of $L$ is incident to at most two $\sim$-classes.
\end{Lemma}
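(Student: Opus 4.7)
The plan is to give a short combinatorial argument that unpacks both directions directly from the definitions, using the edge-partition structure of a line graph decomposition.

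For the forward direction, I would start from a line graph decomposition $\mathscr{C}$ with ${\sim} = {\sim_{\mathscr{C}}}$. Since the sets $E(L[C])$ for $C \in \mathscr{C}$ partition $E(L)$, each $\sim$-class is precisely some nonempty $E(L[C])$, which by the first axiom of Definition~\ref{Def: line graph decomposition} is the edge set of a clique. For the two-class bound, observe that any $\sim$-class incident to a vertex $v$ is of the form $E(L[C])$ for some $C \in \mathscr{C}$ with $v \in C$, so the third axiom (each non-isolated vertex lies in exactly two members of $\mathscr{C}$; each isolated vertex in exactly one) caps the number of such classes at two.

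The backward direction is the substantive step and requires a construction. Given $\sim$ with the stated properties, for each $\sim$-class $X$ let $C_X \subseteq V(L)$ be the set of endpoints of edges in $X$; by hypothesis $L[C_X]$ is a clique with edge set exactly $X$. I would then define
\[
    \mathscr{C} \,\defeq\, \set{C_X \,:\, X \text{ is a } \sim\text{-class}} \,\cup\, \set{\set{v} \,:\, v \in V(L),\ v \text{ lies in fewer than two of the } C_X}.
\]
Verifying that $\mathscr{C}$ is a line graph decomposition is routine: each $L[C]$ is a clique (singletons trivially so), the edge sets $E(L[C])$ partition $E(L)$ because the $\sim$-classes already do and the singletons contribute nothing, and the vertex-count axiom holds by construction — non-isolated vertices lie in one or two of the $C_X$ and the singleton union tops up the count to exactly two, while isolated vertices lie in no $C_X$ and their singleton gives them exactly one set. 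Unpacking the definition of $\sim_{\mathscr{C}}$ then gives ${\sim} = {\sim_{\mathscr{C}}}$.

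The one subtle point is that a non-isolated vertex may be incident to only a single $\sim$-class — for instance, a pendant vertex attached to a clique — in which case the exactness required by the third axiom must be restored by hand by adjoining a singleton. The second term in the union above is included precisely to patch this, and it simultaneously accounts for the isolated vertices. This bookkeeping is the only spot to be careful; there is no deeper obstacle.
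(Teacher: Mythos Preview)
Your proof is correct and is precisely the routine unpacking of Definition~\ref{Def: line graph decomposition} that the paper has in mind; the paper itself gives no argument beyond noting that the lemma is an immediate consequence of the definitions. The only point worth a remark is that in the backward direction your sets $C_X$ automatically have $|C_X| \geq 2$ (since each $\sim$-class is nonempty), so the singletons you adjoin never collide with the $C_X$, and $\mathscr{C}$ really is a set rather than a multiset---but you have handled this correctly.
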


Let $G$ be a graph and let $R$ be a binary relation on $E(G)$. If $H \subseteq G$ is a subgraph of $G$, we let $R|_{H} \defeq R \cap E(H)^2$ be the \emphdef{restriction} of $R$ to $H$. If $U \subseteq V(G)$, then we let $R|_{U} \defeq R|_{G[U]}$ be the \emphdef{restriction} of $R$ to $U$.

\begin{Lemma}\label{lemma: line graph relation reduction}
    Let $L$ be a graph with a line graph relation $\sim$. If 
    $H$ is an induced subgraph of $L$, then ${\sim}|_{H}$ is a line graph relation on $H$. 
\end{Lemma}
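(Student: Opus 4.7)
The plan is to invoke the combinatorial characterization of line graph relations provided by Lemma~\ref{lemma: equiv relation is line graph decomp relation}. That is, it suffices to verify three things for ${\sim}|_{H}$: it is an equivalence relation on $E(H)$, every one of its classes is the edge set of a clique in $H$, and every vertex of $H$ is incident to at most two of its classes.

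The first point is immediate since the restriction of any equivalence relation to a subset of its ground set is again an equivalence relation, and $E(H) \subseteq E(L)$ (using that $H$ is a subgraph of $L$). For the second point, I would fix an arbitrary ${\sim}|_H$-class $D$ and pick some $e \in D$. Letting $C$ denote the $\sim$-class of $e$ in $L$, we have $D = C \cap E(H)$. By hypothesis $C = E(L[K])$ for some clique $K$ in $L$. Since $H$ is an \emph{induced} subgraph of $L$, the set $K' \defeq K \cap V(H)$ is a clique in $H$ and
\[
    D \,=\, E(L[K]) \cap E(H) \,=\, E(H[K'])\textbf{,}
\]
so $D$ is the edge set of the clique $K'$ in $H$, as required. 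For the third point, observe that if a vertex $v \in V(H)$ is incident to a ${\sim}|_H$-class $D = C \cap E(H)$ via an edge $e \in D$, then $v$ is also incident to the $\sim$-class $C$ via $e$ in $L$; since $v$ is incident to at most two $\sim$-classes in $L$, it is incident to at most two ${\sim}|_H$-classes in $H$.

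There is no real obstacle here: the only subtle point is ensuring that restricting the clique $K$ to $V(H)$ still gives a \emph{clique in $H$ whose edges are exactly the edges of $D$}, which is where induced-ness of $H$ is used (otherwise edges of $K'$ present in $L$ might be missing from $H$). With that observation, Lemma~\ref{lemma: equiv relation is line graph decomp relation} directly yields the conclusion.
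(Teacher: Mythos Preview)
Your proof is correct and follows exactly the approach the paper takes: the paper's proof is simply the one-line remark ``Follows immediately from Lemma~\ref{lemma: equiv relation is line graph decomp relation},'' and you have filled in the routine verification of that lemma's hypotheses. Your observation that induced-ness is needed to ensure $K' = K \cap V(H)$ is a clique in $H$ with $E(H[K']) = D$ is precisely the point that makes the ``immediate'' deduction go through.
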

\begin{proof}
    Follows immediately from Lemma \ref{lemma: equiv relation is line graph decomp relation}. 
\end{proof}

\subsection{Main steps of the proof of Theorem~\ref{Theorem: 10 Graph Theorem}}

We can now describe the key steps in the proof of our main result. To begin with, we note that, thanks to Beineke's Theorem~\ref{Theorem: Beineke}, Theorem~\ref{Theorem: 10 Graph Theorem} is equivalent to the following statement:

\begin{Theorem}\label{Theorem: K0 Dichotomy}
Let $L$ be a Borel graph that is a line graph. Then $L$ is a Borel line graph if and only if $L$ does not contain a Borel copy of
$\mathbb{K}_0$.
\end{Theorem}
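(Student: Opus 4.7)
The easy direction is immediate: if $L \cong_B L(G)$ for some Borel graph $G$, then being a Borel line graph is preserved under Borel induced subgraphs (one simply restricts $G$ to the edges of $L$ picked out by the subgraph). Since the introduction establishes that $\K_0$ itself is not a Borel line graph, no Borel induced subgraph of $L$ can be isomorphic to $\K_0$.

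For the hard direction, assume $L$ is a Borel graph that is a combinatorial line graph and contains no Borel copy of $\K_0$. By Lemma~\ref{lemma: equiv relation is line graph decomp relation} and the construction~\eqref{Eq: line graph decomposition}, it suffices to produce a Borel line graph relation $\sim$ on $E(L)$: from such a $\sim$ one reconstructs in a Borel manner the graph $G$ with $L \cong_B L(G)$. The plan is therefore to build $\sim$ as a Borel equivalence relation on $E(L)$ whose classes are edge sets of cliques in $L$ and such that every vertex of $L$ is incident to at most two classes.

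The construction exploits Whitney's classical uniqueness theorem: any connected simple graph distinct from $K_3$ and $K_{1,3}$ is determined up to isomorphism by its line graph. Consequently, on the set $V(L) \setminus B$, where $B$ consists of the vertices lying in connected components of $L$ isomorphic to $K_3$, the combinatorial line graph decomposition is unique and $\sim$ is canonically defined by the conditions of Lemma~\ref{lemma: equiv relation is line graph decomp relation}. The set $B$ itself is both analytic and coanalytic (membership is captured by having degree $2$ and two mutually adjacent neighbors of degree $2$), hence Borel by Theorem~\ref{Theorem: Analytic Separation}; on $B$ one uniformly adopts the star decomposition. On the Whitney-rigid part, I would show that the candidate $\sim$ is likewise both analytic and coanalytic, again concluding Borelness via analytic separation.

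The main obstacle is verifying Borelness of the candidate $\sim$ at vertices $v$ where the partition of $N(v)$ into the two cliques $C_1(v)$ and $C_2(v)$ is not locally decidable, for instance at high-degree vertices where no finite neighborhood pins down the partition. At such a $v$, the ``same clique'' equivalence relation on $N(v)$ can fail to be smooth. The strategy is to invoke the $\E_0$-dichotomy (Theorem~\ref{Theorem: E0 dichotomy}): non-smoothness yields a Borel embedding of $\E_0$ into this local equivalence relation, which, combined with the clique structure around $v$, should assemble into a Borel copy of $\K_0$ inside $L$, contradicting the hypothesis. Converting this local $\E_0$-embedding into an honest Borel $\K_0$-subgraph of $L$, and gluing the local pieces of $\sim$ into a globally coherent Borel line graph relation — likely via the invariant analytic separation lemma (Lemma~\ref{Lemma: invariant analytic separation}) to propagate the decomposition across $E(L)$ — is the technically demanding step that I expect to be the crux of the argument.
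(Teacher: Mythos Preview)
Your proposal contains two genuine gaps.

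First, the claim that ``it suffices to produce a Borel line graph relation $\sim$ on $E(L)$: from such a $\sim$ one reconstructs in a Borel manner the graph $G$'' is false. A Borel line graph relation on $L$ \emph{always} exists, regardless of whether $L$ is a Borel line graph (this is Theorem~\ref{Theorem: A Borel line graph decomposition Exists}). What is needed for the reconstruction \eqref{Eq: line graph decomposition} to yield a Borel graph is that $\sim$ be \emph{smooth}: the vertex set of $G$ is essentially the quotient $E(L)/{\sim}$, which is a standard Borel space exactly when $\sim$ is smooth. The paper separates this into two distinct theorems: first show a Borel $\sim$ exists (Theorem~\ref{Theorem: A Borel line graph decomposition Exists}), then characterize Borel line graphs as those admitting a \emph{smooth} $\sim$ (Theorem~\ref{Theorem: smoothness implies Borel G}), and even then the passage from a smooth $\sim$ to a Borel $G$ requires a delicate chain of analytic separations.

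Second, and more seriously, your proposed use of the $\E_0$-dichotomy is misplaced. You locate the obstruction at individual vertices $v$, where ``the `same clique' equivalence relation on $N(v)$ can fail to be smooth.'' But that equivalence relation has at most two classes, so it is trivially smooth; the $\E_0$-dichotomy applied there never fires. The nonsmoothness that produces $\K_0$ is \emph{global}: it is the relation $\sim$ on all of $E(L)$ that may fail to be smooth. The paper applies the $\E_0$-dichotomy (via Lemma~\ref{lemma:hom} and Miller's Theorem~\ref{Theorem:meager}) to $\sim$ itself, and then must work to convert the resulting embedding $\E_0 \hookrightarrow {\sim}$ on \emph{edges} into a Borel copy of $\K_0$ on \emph{vertices}; this requires showing that the $\E_0$-classes can be made to land in pairwise non-adjacent cliques, which is the content of Claims~\ref{Claim:smooth1} and \ref{Claim:smooth_fibers}. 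Your local picture does not capture this.

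A minor point: Whitney's uniqueness has four exceptional line graphs (Fig.~\ref{Figure: Exceptions to Whitney's Theorem}), not just $K_3$, though since all are finite this is easily repaired.
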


A significant complication in proving Theorem~\ref{Theorem: K0 Dichotomy} arises from the fact that $L$ is not assumed to be locally countable. As briefly discussed in Example~\ref{Example: Luzin--Novikov}, in the study of locally countable Borel graphs, the \hyperref[Theorem: Luzin--Novikov]{Luzin--Novikov theorem} is routinely used to show that various combinatorially defined sets are Borel, but such arguments are unavailable for general Borel graphs. As a result, even very simple sets associated to $L$, such as the set of all isolated vertices, may fail to be Borel. This makes analyzing the structure of $L$ through the lens of Borel combinatorics a particularly intricate task. 

 Let $L$ be a Borel graph that is a line graph. Below we outline the major intermediate results that go into the proof of Theorem~\ref{Theorem: K0 Dichotomy}. Each of them presents interesting challenges in its own right, which we will comment on in the subsequent subsections.
 
 Since $L$ is a line graph, it has a line graph decomposition, and hence there is a line graph relation on $L$. The first step in our proof is to find a \emph{Borel} line graph relation on $L$:

 \begin{Theorem}[Borel line graph relations]\label{Theorem: A Borel line graph decomposition Exists} If $L$ is a Borel graph that is a line graph, then $L$ has a Borel line graph relation.
\end{Theorem}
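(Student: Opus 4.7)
The goal is to construct a Borel equivalence relation $\sim$ on $E(L)$ satisfying the criteria of Lemma~\ref{lemma: equiv relation is line graph decomp relation}: each $\sim$-class is the edge set of a clique in $L$, and every vertex of $L$ is incident to at most two $\sim$-classes. My plan is to define $\sim$ combinatorially in terms of local structure, and then to verify its Borelness using the tools from \S\ref{Section: prelims}.

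The starting point is a Helly-type property: in any graph $G$, four pairwise incident edges must share a common vertex. Applied to $L = L(G)$, this implies that every clique in $L$ of size at least $4$ is contained in a single clique of any line graph decomposition of $L$. Motivated by this, for each pair of adjacent $u, v \in V(L)$, I would consider the ``cohesive closure''
\[
K(u, v) \,\defeq\, \{u, v\} \cup \{w \in V(L) \,:\, \exists\, x \in V(L) \setminus \{u, v, w\},\ \{u, v, w, x\} \text{ is a clique in } L\}.
\]
A direct case analysis shows that $K(u, v)$ equals the decomposition clique of the edge $\{u, v\}$ whenever that clique has size at least $4$, and reduces to $\{u, v\}$ otherwise. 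The candidate relation $e_1 \sim_0 e_2$---defined to hold iff there exist adjacent $u, v$ with $e_1, e_2 \subseteq K(u, v)$---thus correctly recovers $\sim$ on ``large'' decomposition cliques.

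The remaining task is to extend $\sim_0$ to account for decomposition cliques of size $\leq 3$, and in particular to resolve the $K_3/K_{1,3}$ ambiguity of Whitney's theorem. I would proceed by a structural case split: for vertices still incident to fewer than two classes under $\sim_0$, local triangle-completion rules determine the missing clique. For intrinsically ambiguous components (such as components of $L$ isomorphic to $K_3$, where both the ``fan'' and ``true triangle'' decompositions are valid), I would apply invariant analytic separation (Lemma~\ref{Lemma: invariant analytic separation}) to isolate them inside a Borel invariant subset, on which a canonical choice can be made using a fixed Borel linear order on $V(L)$ transported from $\mathbb{R}$.

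The main obstacle is the non-locally-countable setting, where the \hyperref[Theorem: Luzin--Novikov]{Luzin--Novikov theorem} is not available. The relations above are defined via existential quantifiers over $V(L)$ and are therefore a priori only analytic; to upgrade to Borel, I would aim to express their complements analytically as well and invoke analytic separation (Theorem~\ref{Theorem: Analytic Separation}). The key structural input is the forbidden subgraph $K_{1,3}$ from Beineke's list, which forces the neighborhood of each vertex to decompose into at most two cliques and supplies the rigidity needed for quantifier elimination. I expect the most intricate step to be consistently handling the triangle ambiguities: ensuring that the extension of $\sim_0$ over cliques of sizes $2$ and $3$ remains Borel while correctly navigating the Whitney ambiguity uniformly across the graph.
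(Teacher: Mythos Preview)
Your high-level plan—use local structure to approximate the relation, invoke analytic separation to upgrade to Borel, and apply invariant analytic separation to quarantine the ambiguous finite components—matches the paper's architecture. But the specific local gadget you chose, the $4$-clique closure $K(u,v)$, is not strong enough, and this creates a genuine gap.

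The Helly observation is correct: $K(u,v)$ exactly recovers the decomposition clique when that clique has size $\geq 4$. The problem is the complementary part. Your relation $\sim_0$ is only analytic, and the set of edges lying in small decomposition cliques (those of size $\leq 3$) is only coanalytic; neither is visibly Borel, so you cannot cleanly split $E(L)$ into a ``large-clique'' region where $\sim_0$ works and a residual region where your triangle-completion rules would take over. Worse, even if you sandwich $\sim_0$ against a natural coanalytic upper bound (e.g., $e_1 \approx e_2$ iff $e_1\cup e_2$ spans a clique and every $4$-clique containing $e_1$ or $e_2$ extends to a clique with $e_1 \cup e_2$), the sandwich is not tight on infinite components: on an infinite component whose decomposition cliques are all of size $\leq 3$, your $\sim_0$ is the identity and the upper bound is strictly too large, so the separated Borel relation $R$ can fail to be a line graph relation there. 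Then the sets $A_1=\{$infinite components$\}$ and $A_2=\{$components where $R$ fails$\}$ are no longer disjoint, and the invariant-separation step breaks down. Your unspecified ``triangle-completion rules'' would have to do all the work on such components, and you have not indicated how to make them Borel in the non-locally-countable setting.

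The paper closes this gap by replacing $4$-cliques with \emph{nice} subgraphs: finite connected induced subgraphs on at least $7$ vertices. By Corollary~\ref{Corollary: nonsingular clique cover is unique} (Whitney), each nice $\Gamma$ carries a \emph{unique} line graph relation $\sim_\Gamma$, so one can write both an analytic lower bound $R_1$ (``$\exists$ nice $\Gamma$ with $e\sim_\Gamma f$'') and a coanalytic upper bound $R_2$ (``$\forall$ nice $\Gamma$, $e\sim_\Gamma f$''). Because any two edges in an infinite component sit inside a common nice subgraph, $R_1=R_2=\sim_L$ on every infinite component—including those with only small cliques—so the analytic-separation output $R$ is a genuine line graph relation there. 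The only possible failure is on finite components, which is precisely what invariant analytic separation and the finite-component argument then handle. The point is that nice subgraphs encode the \emph{entire} local line-graph relation, whereas $4$-cliques only see its large-clique skeleton.
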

\begin{proof}
\S\ref{Section: Proof of Theorem A Borel line graph decomposition Exisits}.
\end{proof}

    Theorem~\ref{Theorem: A Borel line graph decomposition Exists} yields a Borel line graph relation regardless of whether $L$ is a Borel line graph. The question arises: Given a Borel graph $L$ with a Borel line graph relation $\sim$, how can we tell whether $L$ is a Borel line graph? The answer is given by the following theorem:

    \begin{Theorem}[Smooth line graph relations]\label{Theorem: smoothness implies Borel G} Let $L$ be a Borel graph that is a line graph. Then the following are equivalent:
\begin{enumerate}[label=\ep{\normalfont\arabic*}]
    \item\label{item:Blg} $L$ is a Borel line graph,
    \item\label{item:onesmooth} $L$ has a smooth line graph relation,
    \item\label{item:allsmooth} all Borel line graph relations on $L$ are smooth.
\end{enumerate}
\end{Theorem}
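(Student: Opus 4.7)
The plan is to establish the cycle $(3) \Rightarrow (2) \Rightarrow (1) \Rightarrow (3)$, with $(2) \Rightarrow (1)$ carrying the bulk of the technical content. The implication $(3) \Rightarrow (2)$ is immediate from Theorem \ref{Theorem: A Borel line graph decomposition Exists}, which supplies a Borel line graph relation on $L$ that by hypothesis is smooth. For $(1) \Rightarrow (3)$, suppose $L \cong_B L(G)$ for a Borel graph $G$; identifying $V(L)$ with $E(G)$, the Borel map $\{e_1, e_2\} \mapsto$ (unique element of $e_1 \cap e_2$) $\in V(G)$ witnesses smoothness of the natural line graph relation $\sim_G$ pulled back to $L$. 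For an arbitrary Borel line graph relation $\sim'$ on $L$, Whitney's classical uniqueness theorem asserts that $\sim'$ agrees with $\sim_G$ on every connected component of $L$ not isomorphic to $K_3$, while on $K_3$-components $\sim'$ is smooth trivially (each has only three edges); Borel-ness of the set of $K_3$-components follows from local-finiteness arguments in the style of Example \ref{Example: Luzin--Novikov}, allowing the smooth witnesses on the two parts to be glued into a single Borel one.

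The main implication $(2) \Rightarrow (1)$ requires Borel-ly realizing Krausz's construction of a graph $G$ with $L(G) \cong_B L$. Given a smooth Borel line graph relation $\sim$ on $E(L)$ with witness $\pi : E(L) \to Q$, and writing $\mathscr{C}$ for the associated line graph decomposition, the non-singleton cliques of $\mathscr{C}$ are parametrized by $Q$ via $\pi$, while the singleton cliques $\{v\}$ (arising either when $v$ is isolated in $L$ or when $v$ lies in only one non-singleton clique of $\mathscr{C}$) require auxiliary encoding. The plan is to set $V(G) := Q \sqcup V(L)_0 \sqcup V(L)_1$ with two disjoint copies of $V(L)$, and define $\phi : V(L) \to [V(G)]^2$ assigning each $v$ the pair of labels of the cliques of $\mathscr{C}$ containing it, supplemented from $V(L)_0, V(L)_1$ as needed: with $N(v) := \{\pi(e) : e \in E(L),\, v \in e\} \subseteq Q$, let $\phi(v)$ be $N(v)$, $N(v) \cup \{(v, 0)\}$, or $\{(v, 0), (v, 1)\}$ according to whether $|N(v)| = 2$, $1$, or $0$. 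Then $E(G) := \phi(V(L))$, and $\phi(v) \mapsto v$ is the required isomorphism $L(G) \cong_B L$.

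The main obstacle is to verify that $E(G)$ is Borel. The case distinction above reflects the partition of $V(L)$ by $|N(v)|$, whose pieces are only analytic or coanalytic and generally not Borel in the absence of local countability, so direct case analysis does not yield a Borel description. The key leverage is the fundamental property that any two distinct cliques in $\mathscr{C}$ share at most one vertex, which bounds the fibers of the partial map $(y_1, y_2) \mapsto$ (the unique vertex in $C_{y_1} \cap C_{y_2}$) to size at most one. Combining this single-vertex-sharing principle with the Borel selector for $\sim$ furnished by smoothness, I would apply the \hyperref[Theorem: Luzin--Novikov]{Luzin--Novikov theorem} to an appropriate Borel set of triples $(\{y_1, y_2\}, v, e)$ to obtain Borel-ness of the $Q$-only part of $E(G)$, and use \hyperref[Theorem: Analytic Separation]{analytic separation} together with \hyperref[Lemma: invariant analytic separation]{invariant analytic separation} to handle the portions of $E(G)$ involving $V(L)_0$ and $V(L)_1$, arriving at a fully Borel description of $E(G)$.
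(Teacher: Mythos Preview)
Your cycle of implications matches the paper's, and you correctly identify $(2)\Rightarrow(1)$ as the technical heart. However, two of your steps contain genuine gaps.

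\textbf{In $(1)\Rightarrow(3)$.} Whitney-type uniqueness (Corollary~\ref{Corollary: nonsingular clique cover is unique}) has four exceptions on the line-graph side---the singular graphs $K_3$, $K_4^-$, the square pyramid, and the octahedron---not just $K_3$. More seriously, your appeal to ``local-finiteness arguments in the style of Example~\ref{Example: Luzin--Novikov}'' to show that the union of singular components is Borel does not work: $L$ is not assumed locally countable, so Luzin--Novikov is unavailable for quantifying over neighbors. For instance, ``$x$ has degree exactly $2$'' is the intersection of an analytic condition with a coanalytic one, not obviously Borel. The paper proves $(2)\Rightarrow(3)$ via Lemma~\ref{Lemma: E and E'}, using invariant analytic separation (Lemma~\ref{Lemma: invariant analytic separation}) to produce a Borel $\sim'$-invariant set on which $\sim'$ agrees with the given smooth relation and off of which all $\sim'$-classes are finite.

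\textbf{In $(2)\Rightarrow(1)$.} Your plan to show the $Q$-only part of $E(G)$ is Borel by applying Luzin--Novikov to ``an appropriate Borel set of triples $(\{y_1,y_2\},v,e)$'' does not go through: for fixed $\{y_1,y_2\}$ with $C_{y_1}\cap C_{y_2}=\{v\}$, the number of witnessing edges $e\ni v$ with $\pi(e)=y_1$ is $|C_{y_1}|-1$, which may be uncountable. Your fallback, a ``Borel selector for $\sim$ furnished by smoothness,'' is also unavailable in general: smoothness of a Borel equivalence relation with uncountable classes does \emph{not} guarantee a Borel transversal (take $X=P$ for a Borel $P\subseteq\R^2$ with all vertical sections nonempty but no Borel uniformization; the relation ``same first coordinate'' on $X$ is smooth with no Borel selector). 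The paper's solution is different in kind: rather than attacking $E(G)$ directly, it approximates the analytic vertex--clique incidence relation $R=\{(v,x):\exists\,e\ni v,\ f(e)=x\}$ by a Borel relation $R_5\supseteq R$, obtained through six successive rounds of analytic separation, each pruning a specific analytic ``bad'' set disjoint from $R$. The separations are engineered so that every vertex $R_5$-relates to at most two reals and every real outside $\im(f)$ $R_5$-relates to at most one vertex; \emph{these} finiteness properties are what make Luzin--Novikov applicable, now to $R_5$ itself, so that the partition $V_0\sqcup V_1\sqcup V_2$ by $|\{x:(v,x)\in R_5\}|$ is Borel and the construction of $G$ goes through. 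Borelizing the incidence relation first---rather than the edge set of $G$---is the key technical idea your sketch is missing.
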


Of the above equivalences, we only use \ref{item:Blg} $\Longleftrightarrow$ \ref{item:onesmooth} in our proof of Theorem~\ref{Theorem: 10 Graph Theorem}. Still, the equivalence \ref{item:onesmooth} $\Longleftrightarrow$ \ref{item:allsmooth} is of independent interest, as it is a natural addition to our characterization of 
Borel line graphs by the smoothness of their Borel line graph relations. 
The implication \ref{item:onesmooth} $\Rightarrow$ \ref{item:allsmooth} is 
proved in Appendix~\ref{appendix one smooth implies all smooth}, and the details of \ref{item:onesmooth} $\Rightarrow$ \ref{item:Blg} are presented in \S\ref{Section: proof of Theorem smoothness implies Borel G} (see also \S\ref{section: Analyzing smooth line graph relations} for an informal discussion of this implication). The other implications in Theorem~\ref{Theorem: smoothness implies Borel G} are straightforward: 

\begin{proof}
    \ref{item:Blg} $\Rightarrow$ \ref{item:onesmooth}: Without loss of generality, we may assume that $L = L(G)$ for some Borel graph $G$. 
    The following definition gives a line graph relation on $E(L)$:
\[ \{e_1, e_2\} \sim \{e_1', e_2'\} \iff e_1 \cap e_2 = e_1' \cap e_2'.\]
The smoothness of $\sim$ is witnessed by the function $f: E(L) \to V(G)$ defined by letting $f(\{e_1, e_2\})$ be the unique vertex in $e_1 \cap e_2$. 
Therefore, $L$ has a smooth line graph relation. 

    \ref{item:onesmooth} $\Rightarrow$ \ref{item:Blg}: \S\ref{Section: proof of Theorem smoothness implies Borel G}.

    \ref{item:onesmooth} $\Rightarrow$ \ref{item:allsmooth}: Appendix \ref{appendix one smooth implies all smooth}.
    
    \ref{item:allsmooth} $\Rightarrow$ \ref{item:onesmooth}: Follows from Theorem \ref{Theorem: A Borel line graph decomposition Exists}.
\end{proof}

    Assuming $L$ is not a Borel line graph, Theorems~\ref{Theorem: A Borel line graph decomposition Exists} and \ref{Theorem: smoothness implies Borel G} yield a nonsmooth Borel line graph relation $\sim$ on $E(L)$, which we utilize in \S\ref{Section: reduction to locally countable case} to find a desired Borel copy of $\K_0$ in $L$. 




    In the following subsections we describe in a little more detail some of the ideas used in accomplishing these steps. 

\subsection{Finding a Borel line graph relation \ep{Theorem~\ref{Theorem: A Borel line graph decomposition Exists}}}


A key ingredient in our proof of Theorem~\ref{Theorem: A Borel line graph decomposition Exists} is the fact that a connected line graph has a \emph{unique} line graph decomposition, save for four graphs. This is shown in Corollary \ref{Corollary: nonsingular clique cover is unique} below. These four graphs are listed in the second column of Table~\ref{tab:exceptional} and illustrated in Fig.~\ref{Figure: Exceptions to Whitney's Theorem}. We call a graph \emphd{singular} if it is isomorphic to any of these 4 graphs, and we call a graph \emphd{exceptional} if its line graph is singular. The exceptional graphs are listed in the first column of Table~\ref{tab:exceptional} (here $K_{1,3}^+$ is $K_{1,3}$ with one additional edge, and $K_4^-$ is $K_4$ with a single edge removed.)

\begin{table}[t]
    \centering
    \begin{tabular}{l|l}
    $G$ & $L(G)$\\
    \hline
    $K_3$ or $K_{1,3}$ & $K_3$\\
    $K_{1,3}^+$ & $K_4^-$ \\
    $K_4^-$ & Square pyramid \\
    $K_4$ & Octrahedron
    \end{tabular}
    \caption{Exceptional graphs in Whitney's strong isomorphism theorem (left) with the corresponding line graphs (right).}
    \label{tab:exceptional}
\end{table}

\begin{figure}[t]
\begin{tikzpicture}[
simple/.style={circle, draw = black, fill = black, very thick, inner sep=2pt, minimum size=1mm}, scale=0.9]

\fill[black] (0,-.16) circle (0.1);
\fill[black] (-.5,-1) circle (0.1);
\fill[black] (.5,-1) circle (0.1);
\draw[thick] (0, -.16) -- (-.5, -1);
\draw[thick] (0, -.16) -- (.5, -1);
\draw[thick] (-.5, -1) -- (.5, -1);

\fill[black] (2.5,0) circle (0.1);
\fill[black] (2.5,-1) circle (0.1);
\fill[black] (3.25,-.5) circle (0.1);
\fill[black] (3-1.25,-.5) circle (0.1);
\draw[thick] (2.5, 0) -- (2.5, -1);
\draw[thick] (2.5, 0) -- (3.25, -.5);
\draw[thick] (2.5, 0) -- (2.5-.75, -.5);
\draw[thick] (2.5, -1) -- (3.25, -.5);
\draw[thick] (2.5, -1) -- (2.5-.75, -.5);

\fill[black] (4.5,-.5-.4) circle (0.1);
\fill[black] (5.5,-.1-.4) circle (0.1);
\fill[black] (5.1,-.9-.4) circle (0.1);
\fill[black] (6.1,-.5-.4) circle (0.1);
\draw[thick] (4.5, -.5-.4) -- (5.5, -.1-.4);
\draw[thick] (5.5, -.1-.4) -- (6.1, -.5-.4);
\draw[thick] (6.1, -.5-.4) -- (5.1, -.9-.4);
\draw[thick] (5.1, -.9-.4) -- (4.5, -.5-.4);

\fill[black] (5.3,.5-.4) circle (0.1);
\draw[thick] (5.3, .5-.4) -- (4.5, -.5-.4);
\draw[thick] (5.3, .5-.4) -- (6.1, -.5-.4);
\draw[thick] (5.3, .5-.4) -- (5.1, -.9-.4);
\draw[thick] (5.3, .5-.4) -- (5.5, -.1-.4);

\fill[black] (4.5+ 1.75+1.1,-.5) circle (0.1);
\fill[black] (5.5+ 1.75+1.1,-.1) circle (0.1);
\fill[black] (5.1+ 1.75+1.1,-.9) circle (0.1);
\fill[black] (6.1+ 1.75+1.1,-.5) circle (0.1);
\draw[thick] (4.5+ 1.75+1.1, -.5) -- (5.5+1.75+1.1, -.1);
\draw[thick] (5.5+ 1.75+1.1, -.1) -- (6.1+1.75+1.1, -.5);
\draw[thick] (6.1+ 1.75+1.1, -.5) -- (5.1+1.75+1.1, -.9);
\draw[thick] (5.1+ 1.75+1.1, -.9) -- (4.5+1.75+1.1, -.5);

\fill[black] (5.3+ 1.75+1.1,.5) circle (0.1);
\draw[thick] (5.3+ 1.75+1.1, .5) -- (4.5+1.75+1.1, -.5);
\draw[thick] (5.3+ 1.75+1.1, .5) -- (6.1+1.75+1.1, -.5);
\draw[thick] (5.3+ 1.75+1.1, .5) -- (5.1+1.75+1.1, -.9);
\draw[thick] (5.3+ 1.75+1.1, .5) -- (5.5+1.75+1.1, -.1);

\fill[black] (5.3+ 1.75+1.1,-1.5) circle (0.1);
\draw[thick] (5.3+ 1.75+1.1, -1.5) -- (4.5+1.75+1.1, -.5);
\draw[thick] (5.3+ 1.75+1.1, -1.5) -- (6.1+1.75+1.1, -.5);
\draw[thick] (5.3+ 1.75+1.1, -1.5) -- (5.1+1.75+1.1, -.9);
\draw[thick] (5.3+ 1.75+1.1, -1.5) -- (5.5+1.75+1.1, -.1);

\end{tikzpicture}
\caption{The four singular graphs.}
\label{Figure: Exceptions to Whitney's Theorem}
\end{figure}
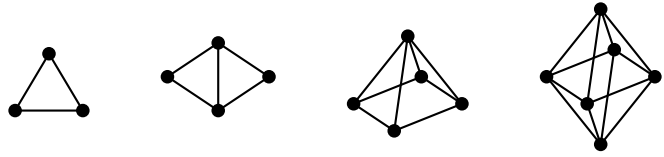

\begin{Theorem}[{Whitney's strong isomorphism theorem}]\label{Whitney}
    If $G$ and $H$ are connected non-exceptional graphs, then every isomorphism $\phi \colon E(G) \to E(H)$ from $L(G)$ to $L(H)$ is induced by an isomorphism $\sigma \colon V(G) \to V(H)$ from $G$ to $H$, that is, if $xy \in E(G)$, then $\phi(xy) = \sigma(x)\sigma(y)$.
\end{Theorem}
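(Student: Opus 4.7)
The plan is to recover the vertex structure of $G$ from the clique structure of $L(G)$ and show that this recovery is canonical when $G$ is non-exceptional. For each non-isolated $v \in V(G)$, let $S_v \defeq \{e \in E(G) : v \in e\}$; then $S_v$ is a clique in $L(G)$, and the collection $\{S_v : v \in V(G)\}$ forms the canonical line graph decomposition of $L(G)$ (as discussed right after Definition~\ref{Def: line graph decomposition}). If I can show that $\phi$ maps $\{S_v : v \in V(G)\}$ bijectively onto $\{S_w : w \in V(H)\}$, then the rule $\phi(S_v) = S_{\sigma(v)}$ defines a bijection $\sigma : V(G) \to V(H)$. For any edge $vu \in E(G)$, the image $\phi(vu)$ then lies in both $\phi(S_v) = S_{\sigma(v)}$ and $\phi(S_u) = S_{\sigma(u)}$, forcing $\phi(vu) = \sigma(v)\sigma(u)$, which makes $\sigma$ the required graph isomorphism inducing $\phi$.

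The heart of the matter is therefore to characterize the sets $S_v$ intrinsically, that is, using only the graph structure of $L(G)$, so that the characterization is transported by $\phi$. My first move is to classify the maximal cliques of $L(G)$: each is either a \emph{vertex clique} $S_v$ or a \emph{triangle clique} $\{ab, bc, ca\}$ arising from a triangle $\{a,b,c\}$ in $G$. Since a triangle clique contains exactly three edges, any clique in $L(G)$ of size at least $4$ is automatically a vertex clique, which handles all vertices of degree $\geq 4$ with no effort.

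The subtle case is a triangle in $L(G)$: it could be a vertex clique $S_v$ for some $v$ of degree $3$, or a triangle clique coming from a triangle in $G$. Here I would employ the classical odd/even dichotomy: call a triangle $\{e_1, e_2, e_3\}$ in $L(G)$ \emph{odd} if some $e_0 \in E(G) \setminus \{e_1, e_2, e_3\}$ is adjacent in $L(G)$ to exactly one of the $e_i$. A direct case check shows that a vertex triangle $S_v = \{vx_1, vx_2, vx_3\}$ is odd provided some $x_i$ has a neighbor outside $\{v, x_1, x_2, x_3\}$, while a triangle clique $\{ab, bc, ca\}$ is never odd (an external edge at $a$, $b$, or $c$ is adjacent to exactly two of its members, and edges further away are adjacent to none). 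Hence odd triangles in $L(G)$ are always vertex cliques and triangle cliques are always even. The only way both types of triangles end up even simultaneously is when the local structure around the triangle is too symmetric, and a short analysis confirms this forces the ambient connected component of $G$ to be one of the exceptional graphs $K_3$, $K_{1,3}$, $K_{1,3}^+$, $K_4^-$, or $K_4$---exactly what the non-exceptional hypothesis excludes.

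With vertex cliques intrinsically characterized, $\phi$ must carry vertex cliques of $L(G)$ to vertex cliques of $L(H)$, yielding the bijection $\sigma$ as above. The main obstacle is precisely the case analysis at degree-$3$ vertices whose closed neighborhood is nearly complete; this is the content of the exceptional-graph exclusion, and verifying that no further exceptions slip through requires careful accounting of how triangles and external edges interact in small subgraphs.
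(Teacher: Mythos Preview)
The paper does not give its own proof of Theorem~\ref{Whitney}; it is quoted as a classical result, with references to Whitney for the finite case and to Jung for the extension to infinite graphs (and to Hemminger and Harary for English-language accounts). The theorem is used only as a black box to derive Corollary~\ref{Corollary: nonsingular clique cover is unique}, so there is nothing in the paper to compare your argument against.

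That said, your outline is precisely the classical route one finds in the cited sources: classify the maximal cliques of $L(G)$ as vertex cliques $S_v$ or triangle cliques, use the odd/even triangle dichotomy to separate them in the size-$3$ case, and then read $\sigma$ off from the induced bijection on vertex cliques. Two points would need to be filled in for a complete proof. First, you only discuss vertices of degree $\geq 3$; when $\deg(v)\leq 2$ the set $S_v$ is not a maximal clique (it is a single vertex of $L(G)$ or an edge that may sit inside a larger clique), and you must explain how these $S_v$ are recovered once the larger ones are fixed---this follows from the requirement that the $S_v$ partition the edges of $L(G)$ with each vertex of $L(G)$ in exactly two of them, but it is not automatic from the maximal-clique picture alone. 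Second, since the paper applies the theorem to arbitrary Borel (hence possibly infinite) graphs, you should note that your argument is local and therefore extends to the infinite case; Whitney's original result was for finite graphs only, and it is Jung's extension that is actually being invoked.
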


    Whitney \cite{whitney1932congruent} proved Theorem~\ref{Whitney} for finite graphs in 1932. A short alternative proof was given by Jung \cite{Jung1966WhitneyStrongIsomorphismTheorem}, who also extended the result to infinite graphs. Jung's paper is in German; for an English version of the proof, see \cite{Hemminger1972whitney} or \cite[Theorem~8.3]{Harary1969GraphTheory}. 
    We shall apply Theorem~\ref{Whitney} in the form of the following corollary: 

\begin{Corollary}[Uniqueness of line graph decompositions]\label{Corollary: nonsingular clique cover is unique}
If $L$ is a connected nonsingular line graph, then $L$ has a unique line graph decomposition \ep{and thus a unique line graph relation}. \end{Corollary}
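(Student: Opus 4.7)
My plan is to deduce the corollary from Whitney's strong isomorphism theorem. Given two line graph decompositions $\mathscr{C}_1$ and $\mathscr{C}_2$ of $L$, I would use the construction in \eqref{Eq: line graph decomposition} to build graphs $G_1$ and $G_2$ with $V(G_i) = \mathscr{C}_i$, equipped with their natural edge-isomorphisms $\phi_i \colon E(G_i) \to V(L)$ sending $\{C, C'\}$ to the unique element of $C \cap C'$. Since $L$ is connected, any path in $L$ translates into a walk through consecutive cliques in $\mathscr{C}_i$ (two consecutive edges of $L$ share a vertex, which forces their ambient cliques to intersect in $G_i$), so each $G_i$ is connected. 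Because $L \cong L(G_i)$ is nonsingular, $G_i$ is by definition non-exceptional.

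With both hypotheses in place, I would apply Whitney's theorem to the isomorphism $\psi \defeq \phi_2^{-1} \circ \phi_1 \colon L(G_1) \to L(G_2)$, producing a graph isomorphism $\sigma \colon G_1 \to G_2$ that induces $\psi$ on edges; equivalently, $\sigma$ is a bijection $\mathscr{C}_1 \to \mathscr{C}_2$ satisfying $\psi(\{C, C'\}) = \{\sigma(C), \sigma(C')\}$ for every edge of $G_1$. The remaining task is to verify $C = \sigma(C)$ as subsets of $V(L)$ for every $C \in \mathscr{C}_1$, which will yield $\mathscr{C}_1 = \mathscr{C}_2$.

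The key bookkeeping step is the observation that for any $C \in \mathscr{C}_i$, the star of edges of $G_i$ incident to $C$ maps under $\phi_i$ onto exactly the vertex set $C \subseteq V(L)$: indeed, each non-isolated $v \in C$ lies in precisely one further clique $C' \in \mathscr{C}_i$, and $\phi_i(\{C, C'\}) = v$. Since $\sigma$ sends the star of $C$ in $G_1$ to the star of $\sigma(C)$ in $G_2$, and $\phi_1 = \phi_2 \circ \psi$, these two stars project onto the same subset of $V(L)$, yielding $C = \sigma(C)$. I do not foresee a serious obstacle: the substantive content lies entirely in Whitney's theorem, and the only boundary case, a single-vertex $L$, is handled trivially since connectedness prevents isolated vertices from coexisting with other vertices, so that the nontrivial case falls squarely under the construction in \eqref{Eq: line graph decomposition}.
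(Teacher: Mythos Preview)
Your proposal is correct and follows essentially the same route as the paper: build $G_1$, $G_2$ from the two decompositions via \eqref{Eq: line graph decomposition}, apply Whitney's theorem to the composite isomorphism $\psi = \phi_2^{-1}\circ\phi_1$ to obtain $\sigma$, and then use the identity $C = \{\phi_i(\{C,C'\}) : C' \in N_{G_i}(C)\}$ to conclude $\sigma(C) = C$. You are in fact slightly more explicit than the paper in checking that $G_i$ is connected, which is required to invoke Theorem~\ref{Whitney} as stated.
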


\begin{proof}
Let $L$ be a connected nonsingular line graph. If $L$ has one vertex, its line graph decomposition is clearly unique. Otherwise, suppose $\mathscr{C}$ and $\mathscr{C}'$ are line graph decompositions of $L$. Let $G$ and $G'$ be the graphs obtained from $\mathscr{C}$ and $\mathscr{C}'$ respectively as in \eqref{Eq: line graph decomposition}. Then $L(G) \cong L$ and $L(G') \cong L$, say by isomorphisms $\phi$ and $\phi'$ respectively. By construction, for all $C \in \mathscr{C}$ and $C' \in \mathscr{C}'$, 
\[
    C \,=\, \set{\phi(\set{C,B}) \,:\, B \in N_G(C)} \quad \text{and} \quad C' \,=\, \set{\phi'(\set{C',B'}) \,:\, B' \in N_{G'}(C')}.
\]
 Since $\psi \defeq (\phi')^{-1} \circ \phi$ is an isomorphism from $L(G)$ to $L(G')$, by Theorem~\ref{Whitney}, $\psi$ is induced by an isomorphism $\sigma$ from $G$ to $G'$, i.e., if $\set{C,C'} \in E(G)$, then $\psi(\set{C,C'}) = \set{\sigma(C), \sigma(C')} \in E(G')$. Now, for any $C \in \mathscr{C}$, we can write
 \begin{align*}
    \sigma(C) \,&=\, \set{\phi'(\set{\sigma(C),B'}) \,:\, B' \in N_{G'}(\sigma(C))} \\ 
    &=\, \set{\phi'(\set{\sigma(C), \sigma(B)}) \,:\, B \in N_G(C)} \\ 
    &=\, \set{\phi' (\psi(\set{C,B})) \,:\, B \in N_G(C)} \,=\, \set{\phi(\set{C,B}) \,:\, B \in N_G(C)}\,=\, C.
 \end{align*}
 Thus $C \in \mathscr{C}'$, 
 and hence $\mathscr{C} \subseteq \mathscr{C}'$. A symmetrical argument shows that $\mathscr{C}' \subseteq \mathscr{C}$ and thus $\mathscr{C} = \mathscr{C}'$. 
 %
%
%
%
\end{proof}


With Corollary~\ref{Corollary: nonsingular clique cover is unique} in hand, it is not difficult to argue that if a Borel graph $L$ is a line graph and all its components are nonsingular, then the unique line graph relation on $L$ must be Borel. On the other hand, if all components of $L$ \emph{are} singular, then in particular they are finite, and it is again straightforward to find a Borel line graph relation on $L$ by picking one of the finitely many such relations on each component of $L$. (This is an instance of the generally well-understood fact that Borel combinatorics essentially trivialize on Borel graphs with finite components, see, e.g., \cites[\S5.3]{Pikhurko2021Survey}[\S2.2]{asiLLL}.) 
The difficult case in the proof of Theorem~\ref{Theorem: A Borel line graph decomposition Exists} is when $L$ has a mixture of singular and nonsingular components. The challenge is that it may be impossible to separate the singular components from the nonsingular ones in a Borel way: the union of all singular components of $L$ is a coanalytic---but not necessarily Borel---set. 
To overcome this difficulty, we use Corollary~\ref{Corollary: nonsingular clique cover is unique} and the \hyperref[Theorem: Analytic Separation]{analytic separation theorem} to first construct a Borel relation $R$ on $E(L)$ that induces a line graph relation on every infinite component of $L$, but can behave arbitrarily on finite components. Next we consider the following two sets:
\begin{align*}
    A_1 \,&\defeq\, \set{x \in V(L) \,:\, \text{the component of $x$ is infinite}},\\
    A_2 \,&\defeq\, \set{x \in V(L) \,:\, \text{$R$ does {not} induce a line graph relation on the component of $x$}}.
\end{align*}
These sets are analytic and---by the construction of $R$---disjoint. With the help of invariant analytic separation (Lemma~\ref{Lemma: invariant analytic separation}), we are able to find a Borel set $B$ such that $A_1 \subseteq B \subseteq A_2^c$ and $B$ is a union of connected components of $L$. Since $B \cap A_2 = \0$, every component of $L$ contained in $B^c$ is finite, which allows us to modify $R$ on $B^c$ to obtain a desired line graph relation on $L$. The details are presented in \S\ref{Section: Proof of Theorem A Borel line graph decomposition Exisits}.

The argument sketched above is representative of the techniques used in this paper, in that it involves a series of applications of analytic separation to construct a Borel structure with desirable combinatorial properties. The proof of Theorem~\ref{Theorem: smoothness implies Borel G} relies on similar ideas, but with even more rounds of analytic separation.




\subsection{Analyzing smooth line graph relations \ep{Theorem~\ref{Theorem: smoothness implies Borel G}}}\label{section: Analyzing smooth line graph relations}



The proof of the implication \ref{item:onesmooth} $\Rightarrow$ \ref{item:Blg} is not as straightforward as may initially appear. To indicate the source of the difficulty, let us sketch an obvious naive approach (which ends up failing). Suppose that $\sim$ is a smooth line graph relation on $L$ and let $f \colon E(L) \to \R$ be a Borel function witnessing the smoothness of $\sim$. This means that for each point $x$ in the image of $f$, $f^{-1}(x)$ is a $\sim$-class. 
Recall the endpoints of the edges of each $\sim$-class induce a clique in $L$; let us denote this clique by $C_x \subseteq V(L)$. For $x \in \R \setminus \im(f)$, set $C_x \defeq \0$. 
In an attempt to mimic \eqref{Eq: line graph decomposition}, let us consider the graph $G$ with $V(G) \defeq \R$ and 
\begin{align*}
        E(G) \,&\defeq\, \{xy \in [\R]^2 \,:\, C_x \cap C_y \neq \0\}, 
\end{align*}
and define a map $\phi \colon E(G) \to V(L)$ by letting $\phi(xy)$ for each edge $xy \in E(G)$ be the (necessarily unique) vertex in $C_x \cap C_y$. Ideally, $\phi$ witnesses $L(G) \cong_B L$ by $\phi$. Unfortunately, there are two issues with the construction:

\begin{itemize}[wide]
    \item First, the map $\phi$ defined in this way is an embedding of $L(G)$ into $L$, but it is only surjective if every vertex of $L$ is incident to exactly two $\sim$-classes. In general, some vertices of $L$ may be incident to one $\sim$-class or be isolated. Note that the set of all isolated vertices, as well as the set of all vertices incident to a single $\sim$-class, need not be Borel. 

    \item The second problem is that the set $E(G)$ defined above is analytic but not necessarily Borel. In other words, $G$ may fail to be a Borel graph.
\end{itemize}

The crux of the difficulty here is that the following relation may not be Borel:
\[
    R \,\defeq\, \set{(v, x) \in V(L) \times \R \,:\, \exists \, e \in E(L) \, (f(e) = x, \, v \in e)}.
\]
To circumvent this obstacle, we repeatedly apply \hyperref[Theorem: Analytic Separation]{analytic separation} to construct a sequence $R_0$, $R_1$, $R_2$, $R_3$, $R_4$, $R_5$ of Borel relations that in some sense ``approximate'' $R$. With care, we are able to ensure that the final relation, $R_5$, has the following properties:
\begin{itemize}
    \item $R \subseteq R_5$, and if $v \, R_5 \, x$ and $x \in \im(f)$, then $v \, R\, x$,
    \item every vertex of $L$ $R_5$-relates to at most two elements of $\R$,
    \item every element of $\R \setminus \im(f)$ $R_5$-relates to at most one vertex of $L$.
\end{itemize}
We then show that these properties enable us to use $R_5$ in place of $R$ in the construction of a Borel graph $G$ with $L(G) \cong_B L$. The details are given in \S\ref{Section: proof of Theorem smoothness implies Borel G}.


\subsection{Finishing the proof}\label{subsec:reduction_loc_countable}

Let $L$ be a Borel graph with a nonsmooth Borel line graph relation $\sim$. To obtain a Borel copy of $\K_0$ in $L$, we seek a Borel induced subgraph $H \subseteq L$ such that:
    \begin{itemize}
        \item every component of $H$ is a clique, and
        \item the equivalence relation $\equiv_H$ on $V(H)$ whose classes are the components of $H$ is nonsmooth.
    \end{itemize}
Once we find such $H$, we can take $\phi \colon \mathcal{C} \to V(H)$ to be a Borel embedding from $\E_0$ to $\equiv_H$ guaranteed by the \hyperref[Theorem: E0 dichotomy]{$\E_0$-dichotomy} and observe that $L[\im(\phi)]$ is a Borel copy of $\K_0$ in $L$, as desired.

To motivate our construction of $H$, note that since $\sim$ is nonsmooth, the $\E_0$-dichotomy yields a Borel embedding $\rho \colon \mathcal{C} \to E(L)$ from $\E_0$ to $\sim$. In particular, if $\alpha$, $\beta \in \mathcal{C}$ are not $\E_0$-related, then $\rho(\alpha) \not\sim \rho(\beta)$. We want to strengthen this property as follows:
\begin{idea}\label{eq:rho}
    \textsl{If $\alpha$, $\beta \in \mathcal{C}$ are not $\E_0$-related, then the endpoints of the edge $\rho(\alpha)$ are not adjacent to the endpoints of $\rho(\beta)$.}
\end{idea}

\noindent It is not hard to see that if \eqref{eq:rho} holds, we can let $H$ be the subgraph of $L$ induced by the vertices incident to $\im(\rho)$. In order to find $\rho$ satisfying \eqref{eq:rho}, we rely on the following lemma:

\begin{Lemma}\label{lemma:hom}
    Let $X$ be a standard Borel space and let $E \subseteq X^2$ be a nonsmooth Borel equivalence relation on $X$. If $R \subseteq X^2$ is a Borel set such that for each $x \in X$, the restriction of $E$ to the set
    \[
        R(x) \,\defeq\, \set{y \in X \,:\, x\,R\, y}
    \]
    is smooth, then there is a Borel injective homomorphism $\rho \colon \mathcal{C} \to X$ from $(\E_0^c,\, \E_0)$ to $(R^c,\, E)$.
\end{Lemma}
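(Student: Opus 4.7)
First, I would apply the $\E_0$-dichotomy (Theorem~\ref{Theorem: E0 dichotomy}) to $E$: since $E$ is nonsmooth, there is a Borel injection $f : \mathcal{C} \to X$ with $\alpha\,\E_0\,\beta \iff f(\alpha)\,E\,f(\beta)$. Pulling $R$ back, $S \defeq (f \times f)^{-1}(R) \subseteq \mathcal{C}^2$ is Borel, and for each $\alpha \in \mathcal{C}$ the restriction of $f$ to $S(\alpha) = f^{-1}(R(f(\alpha)))$ is a Borel injective reduction of $\E_0|_{S(\alpha)}$ into $E|_{R(f(\alpha))}$; since the latter is smooth by hypothesis, so is the former. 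It thus suffices to produce a Borel injective homomorphism $\rho' : \mathcal{C} \to \mathcal{C}$ from $(\E_0^c, \E_0)$ to $(S^c, \E_0)$, since then $\rho \defeq f \circ \rho'$ is the desired map.

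To build $\rho'$, I would consider the Borel graph $H$ on $\mathcal{C}$ defined by $\alpha\, H\, \beta \iff \alpha\, S\, \beta \land \alpha \not\E_0 \beta$, and aim to locate a Borel $H$-independent set $A \subseteq \mathcal{C}$ on which $\E_0$ remains nonsmooth. Assuming one can show $\chi_B(H) \leq \aleph_0$, a Borel proper coloring $c : \mathcal{C} \to \N$ of $H$ partitions $\mathcal{C}$ into Borel $H$-independent color classes $A_n \defeq c^{-1}(n)$. If $\E_0|_{A_n}$ were smooth for every $n$, then Borel transversals $T_n \subseteq A_n$ for $\E_0|_{A_n}$ could be assembled into the Borel set $T \defeq \bigsqcup_{n} \bigl(T_n \setminus [T_0 \cup \cdots \cup T_{n-1}]_{\E_0}\bigr)$; each saturation is Borel by Luzin--Novikov since $\E_0$-classes are countable, and one checks directly that $T$ meets each $\E_0$-class in exactly one point, contradicting the nonsmoothness of $\E_0$. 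Hence some $A_{n_0}$ has $\E_0|_{A_{n_0}}$ nonsmooth; applying the $\E_0$-dichotomy to $\E_0|_{A_{n_0}}$ yields a Borel injection $\rho' : \mathcal{C} \to A_{n_0}$ with $\alpha\, \E_0\, \beta \iff \rho'(\alpha)\, \E_0\, \rho'(\beta)$, and the $H$-independence of $A_{n_0}$ forces $\alpha \not\E_0 \beta \Rightarrow \neg(\rho'(\alpha)\, S\, \rho'(\beta))$, completing the construction.

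The hard part will be establishing the chromatic bound $\chi_B(H) \leq \aleph_0$. The graph $H$ is Borel but in general fails to be locally countable---the section $H(\alpha) = S(\alpha) \setminus [\alpha]_{\E_0}$ can be an uncountable Borel set---so the standard Luzin--Novikov/Feldman--Moore route to countable Borel chromatic numbers is unavailable. My plan for this step is to exploit the smoothness of each $\E_0|_{S(\alpha)}$ together with the countability of $\E_0$-classes to write $S$ as a countable union of Borel subrelations whose $\alpha$-sections meet each $\E_0$-class at most once; each such ``layer'' is essentially the graph of a partial Borel function and hence has Borel chromatic number at most~$3$ by the standard Kechris--Solecki--Todorcevic bound, so countably many layers yield $\chi_B(H) \leq \aleph_0$. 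Uniformizing the $\alpha$-wise transversal decomposition into a globally Borel one is expected to require iterated applications of analytic separation (Theorem~\ref{Theorem: Analytic Separation}), in the spirit of the constructions already developed in the paper for Theorems~\ref{Theorem: A Borel line graph decomposition Exists} and~\ref{Theorem: smoothness implies Borel G}.
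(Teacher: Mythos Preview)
Your reduction in the first paragraph to the case $X = \mathcal{C}$, $E = \E_0$ via the $\E_0$-dichotomy is correct and is exactly how the paper begins. The scheme in the second paragraph---locate a Borel $H$-independent set on which $\E_0$ remains nonsmooth, then apply the $\E_0$-dichotomy once more---is also a sound reduction. The gap is in the third paragraph: the chromatic bound $\chi_B(H) \leq \aleph_0$ that you set out to prove is \emph{false} in general, so the plan cannot be completed.

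For a counterexample, fix a continuous injection $\psi : \mathcal{C} \to \mathcal{C}$ with $\alpha \neq \beta \Rightarrow \psi(\alpha) \not\E_0 \psi(\beta)$; such $\psi$ is easy to build (e.g., let $\psi(\alpha)$ repeatedly list longer and longer initial segments of $\alpha$, so that any single disagreement between $\alpha$ and $\beta$ forces infinitely many disagreements between $\psi(\alpha)$ and $\psi(\beta)$). Then $P \defeq \im(\psi)$ is an uncountable Borel partial transversal for $\E_0$, so $\E_0|_P$ is literally equality. Taking $S \defeq P \times P$, every section $S(\alpha)$ is either $P$ or $\0$, hence $\E_0|_{S(\alpha)}$ is smooth and the hypothesis of the lemma holds. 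But now $H = S \cap \E_0^c$ is the complete graph on $P$, whose Borel independent sets meet $P$ in at most one point; since $P$ is uncountable, $\chi_B(H) > \aleph_0$. Note too that in this example $S(\alpha) = P$ \emph{already} meets each $\E_0$-class in at most one point, so your proposed layer decomposition yields the single layer $S$ itself---which has uncountable sections and is nowhere near the graph of a partial function. The assertion ``each such layer is essentially the graph of a partial Borel function'' conflates ``sections meet each $\E_0$-class at most once'' with ``sections are singletons''.

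The paper avoids all of this with Baire category. A Borel subset $A \subseteq \mathcal{C}$ on which $\E_0$ is smooth is necessarily meager (this is the generic ergodicity of $\E_0$), so every section $S(\alpha)$ is meager; by the Kuratowski--Ulam theorem, $S$ itself is then a meager subset of $\mathcal{C}^2$. A Mycielski-type result of Miller (stated in the paper as Theorem~\ref{Theorem:meager}) directly furnishes a continuous injective homomorphism $\rho' : \mathcal{C} \to \mathcal{C}$ from $(\E_0^c, \E_0)$ to $(S^c, \E_0)$, with no coloring or separation argument needed.
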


Here, given binary relations $(E_1, \ldots, E_n)$ on a set $X$ and $(F_1, \ldots, F_n)$ on a set $Y$, a \emphd{homomorphism} from $(E_1, \ldots, E_n)$ to $(F_1, \ldots, F_n)$ is a function $\rho : X \to Y$ such that $x\, E_i \, y \Rightarrow \rho(x)\,  F_i\, \rho(y)$ for all $x$, $y \in X$ and $1 \leq i \leq n$. We derive Lemma~\ref{lemma:hom} from the $\E_0$-dichotomy and a Mycielski-style theorem due to Miller \cite[Proposition 3]{miller2011classicalProofOfTheKZcanonization}. We then argue that it can be applied with $E= {\sim}$ and
\[
    R \,\defeq\, \set{(e, e') \,:\, \text{$e$ and $e'$ have adjacent endpoints}},
\]
resulting in a mapping $\rho \colon \mathcal{C} \to E(L)$ with the desired properties. The details are presented in \S\ref{Section: reduction to locally countable case}.

\section{Proof of Theorem \ref{Theorem: A Borel line graph decomposition Exists}}\label{Section: Proof of Theorem A Borel line graph decomposition Exisits}

\begin{Theorem*}[\ref{Theorem: A Borel line graph decomposition Exists}]
If $L$ is a Borel graph that is a line graph, then $L$ has a Borel line graph relation.
\end{Theorem*}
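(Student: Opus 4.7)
My plan is to follow the roadmap given in \S\ref{section: proof outline}. The key combinatorial input is Corollary~\ref{Corollary: nonsingular clique cover is unique}: every infinite component of $L$ is nonsingular and therefore carries a unique line graph relation. The strategy is to first build a Borel relation $R \subseteq E(L)^2$ that coincides with this canonical relation on every infinite component, and then handle the remaining finite components via standard Luzin--Novikov-style arguments.

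\emph{Step 1: constructing $R$ via analytic separation.} Consider the two sets
\begin{align*}
P &\,\defeq\, \bigl\{(e,e')\in E(L)^2 \,:\, e,\, e' \text{ lie in a common infinite component and some line graph relation on that component relates them}\bigr\},\\
Q &\,\defeq\, \bigl\{(e,e')\in E(L)^2 \,:\, e,\, e' \text{ lie in a common infinite component and some line graph relation on that component separates them}\bigr\}.
\end{align*}
Both are analytic: ``lying in a common infinite component'' is analytic by lifting $\equiv_L$ from Example~\ref{Example: Analytic2} to edges and witnessing infinitude of a component by an injection $\N\to V(L)$ into it, while the existence of a line graph relation with a given property is witnessed by the associated decomposition (a collection of subsets of $V(L)$), whose defining features are local by Lemma~\ref{lemma: equiv relation is line graph decomp relation}. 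By Corollary~\ref{Corollary: nonsingular clique cover is unique}, $P \cap Q = \emptyset$. Analytic separation (Theorem~\ref{Theorem: Analytic Separation}) yields a Borel set $R_0$ with $P \subseteq R_0 \subseteq Q^c$, and setting $R \defeq R_0 \cup R_0^{-1} \cup \Delta(E(L))$ retains this property while also making $R$ symmetric and reflexive. On any pair of edges in a common infinite component, $R$ then coincides with the unique line graph relation on that component; elsewhere $R$ is unconstrained.

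\emph{Step 2: peeling off the good components.} Define
\[
A_1 \,\defeq\, \{v\in V(L)\,:\,\text{the component of }v\text{ is infinite}\},\qquad A_2 \,\defeq\, \{v\in V(L)\,:\, R\text{ does not induce a line graph relation on the component of }v\}.
\]
Both sets are analytic ($A_1$ by the injection argument above, $A_2$ because, via Lemma~\ref{lemma: equiv relation is line graph decomp relation}, any failure of $R|_{[v]_{\equiv_L}}$ to be a line graph relation is witnessed by a finite configuration of edges and vertices in the component), and both are clearly $\equiv_L$-invariant. By construction $A_1\cap A_2=\emptyset$, and invariance implies that no element of $A_1$ is $\equiv_L$-related to any element of $A_2$. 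Invariant analytic separation (Lemma~\ref{Lemma: invariant analytic separation}) then yields an $\equiv_L$-invariant Borel set $B$ with $A_1\subseteq B\subseteq A_2^c$, so $R|_{E(L[B])}$ is already a Borel line graph relation on $L[B]$, and every component of $L[B^c]$ is finite.

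\emph{Step 3: handling finite components.} Since all components of $L[B^c]$ are finite, $L[B^c]$ is locally finite and its component equivalence relation is Borel with finite classes by the \hyperref[Theorem: Luzin--Novikov]{Luzin--Novikov theorem}. On each finite component there are only finitely many line graph relations, so a Borel selector (e.g.\ a component-wise choice minimizing some Borel ranking on finite relations) produces a Borel line graph relation $R'$ on $L[B^c]$. Gluing $R|_{E(L[B])}$ and $R'$ yields the desired Borel line graph relation on $L$. I expect the main obstacle to be Step~1: the set of infinite components of $L$ need not itself be Borel, so we cannot directly extract the canonical line graph relation component-by-component in a Borel way; instead, we leverage uniqueness (Corollary~\ref{Corollary: nonsingular clique cover is unique}) to force $P$ and $Q$ to be disjoint and let analytic separation fill in the gap.
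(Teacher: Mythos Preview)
Your Steps 2 and 3 match the paper's proof, and the overall architecture is the same. The gap is in Step 1: your justification that $P$ and $Q$ are analytic does not work. You say the existence of a suitable line graph relation ``is witnessed by the associated decomposition (a collection of subsets of $V(L)$),'' but $L$ is not assumed locally countable, so a component may be uncountable, and a line graph decomposition of it is then a possibly uncountable family of possibly uncountable subsets---not a point in any standard Borel space. Coding the relation itself as an element of $2^{E(L)^2}$ fails for the same reason (this space is not Polish when $E(L)$ is uncountable), and the ``local defining features'' remark does not help: locality means each defining condition mentions only finitely many vertices, but one still needs a Polish space of candidate witnesses to project from in order to conclude analyticity.

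The paper supplies the missing idea: use \emph{finite} witnesses. Call a connected finite induced subgraph $\Gamma\subseteq L$ with $|V(\Gamma)|\geq 7$ \emph{nice}; every nice $\Gamma$ is nonsingular, so by Corollary~\ref{Corollary: nonsingular clique cover is unique} it carries a unique line graph relation $\sim_\Gamma$, and by Lemma~\ref{lemma: line graph relation reduction} this agrees with the restriction of any global one. One then sets $e\,R_1\,e'$ iff some nice $\Gamma$ contains $e,e'$ with $e\sim_\Gamma e'$, and $e\,R_2\,e'$ iff $e,e'$ lie in a common clique of $L$ and every nice $\Gamma$ containing them has $e\sim_\Gamma e'$. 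Since the quantifiers now range over finite tuples of vertices, $R_1$ is analytic and $R_2$ is coanalytic; moreover $R_1\subseteq R_2$, and on every infinite component both coincide with the unique line graph relation there. Analytic separation between $R_1$ and $R_2$ then yields the Borel $R$ you want. Your sets $P$ and $Q$ are in fact analytic, but establishing this is exactly the work you skipped.
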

\begin{proof}
Given $e$, $f \in E(L)$, we write $e \star f$ whenever there exists a clique $C$ in $L$ with $e$, $f \in E(C)$. Note that the relation $\star$ is Borel, since
\begin{align*}
    \set{x_1, x_2} \star \set{x_3, x_4} \iff \forall\, 1 \leq i, \, j \leq 4 \, (x_i = x_j \text{ or } x_ix_j \in E(L)).
\end{align*}
Let $\sim_L$ be an arbitrary (not necessarily Borel) line graph relation on $L$. Then ${\sim_L} \subseteq {\star}$. Call an induced subgraph $\Gamma \subseteq L$ \emphdef{nice} if $\Gamma$ is connected, finite, and $|V(\Gamma)|\geq 7$. Note that if $\Gamma$ is a nice subgraph of $L$, then it is a nonsingular line graph, as it is an induced subgraph of $L$ and all singular graphs have at most 6 vertices. Thus, by Corollary~\ref{Corollary: nonsingular clique cover is unique}, every nice graph $\Gamma$ has a unique line graph relation, $\sim_{\Gamma}$. By Lemma \ref{lemma: line graph relation reduction}, ${\sim_L}|_\Gamma$ is also a line graph relation on $\Gamma$, and thus ${\sim_\Gamma} = {{\sim_L}|_\Gamma}$.

Define relations $R_1$ and $R_2$ on $E(L)$ as follows:
\begin{align*}
    e \,R_1\, f &\iff \exists \text{ nice $\Gamma \subseteq L$ with $e$, $f \in E(\Gamma)$ and $e \sim_{\Gamma} f$.}\\
    e \,R_2\, f &\iff e \star f \text{ and } \forall \text{ nice $\Gamma \subseteq L$ $\left(e, f \in E(\Gamma) \Rightarrow e \sim_{\Gamma} f\right)$.}
\end{align*}

\begin{Claim}\label{Claim: simone subset of simtwo} The relations $R_1$ and $R_2$ have the following properties:
    \begin{enumerate}[label={\ep{\normalfont\roman*}}]
        \item\label{item:inclusions} $R_1 \subseteq R_2 \subseteq {\star}$,
        \item\label{item:infinite_components} if $H$ is an infinite component of $L$, then $R_1|_H =\, R_2|_H = {\sim_{L}}|_H$,
        \item\label{item:complexity} $R_1$ is analytic, while $R_2$ is coanalytic.
    \end{enumerate}
%
\end{Claim}
\begin{claimproof}
    We start by observing that if edges $e$, $f \in E(L)$ are contained in some nice graph, then
    \begin{equation}\label{eq:three-way-equivalence}
        e \,R_1\, f  \iff  e\,R_2\, f  \iff  e \sim_L f,
    \end{equation}
    because ${\sim_\Gamma} = {{\sim_L}|_\Gamma}$ for every nice graph $\Gamma$ and ${\sim_L} \subseteq {\star}$.

    \smallskip
    
    \ref{item:inclusions} The inclusion $R_2 \subseteq {\star}$ is clear, while $R_1 \subseteq R_2$ follows by \eqref{eq:three-way-equivalence}. 

    \smallskip
    
    \ref{item:infinite_components} 
    If $H$ is an infinite component of $L$, then for any pair of edges $e$, $f \in E(H)$ we can find a nice subgraph of $H$ containing both $e$ and $f$, so $R_1|_H =\, R_2|_H = {\sim_{L}}|_H$ by \eqref{eq:three-way-equivalence}. 

    \smallskip

    \ref{item:complexity} For each $n \in \N$, let
    \begin{align*}
        P_n \,\defeq\, &\big\{(e,f, v_1, \ldots, v_n) \in E(G)^2 \times V(G)^n \,:\, \\
        &\hspace{0.5in}\text{the graph $\Gamma \defeq L[\set{v_1, \ldots, v_n}]$ is nice, $e$, $f \in E(\Gamma)$, and $e \sim_\Gamma f$}\big\}.
    \end{align*}
    The statement ``the graph $\Gamma \defeq L[\set{v_1, \ldots, v_n}]$ is nice, $e$, $f \in E(\Gamma)$, and $e \sim_\Gamma f$'' can be expressed as a Boolean combination of statements of the form ``$v_i = v_j$,'' ``$v_i v_j \in E(L)$,'' ``$e = \set{v_i, v_j}$,'' and ``$f = \set{v_i, v_j}$.'' It follows that the set $P_n$ is Borel. By definition,
\begin{align*}
    e \,R_1\, f \iff \exists\, n \in \N,\, \exists\, (v_1, \ldots, v_n) \in V(G)^n \, \big((e,f,v_1, \ldots, v_n) \in P_n\big),
\end{align*}
which shows that $R_1$ is a countable union of analytic sets, hence it is itself analytic. The proof that $R_2$ is coanalytic is similar, so 
%
we omit the details.
\end{claimproof}

By Claim \ref{Claim: simone subset of simtwo} and the {analytic separation theorem}, there exists a Borel set $R \subseteq E(L)^2$ such that $R_1 \subseteq R \subseteq R_2$. Let $\equiv$ be the equivalence relation on $V(L)$ whose classes are the components of $L$. Note that $\equiv$ is analytic (see Example~\ref{Example: Analytic2}). For a vertex $x \in V(L)$, let $[x]$ denote the component of $L$ containing $x$, and define:
\begin{align*}
    A_1 \,&\defeq\, \{x \in V(L) \,:\, [x] \text{ is infinite}\},\\
    A_2 \,&\defeq\, \{x \in V(L) \,:\, R|_{[x]} \text{ is \textit{not} a line graph relation on $[x]$}\}.
\end{align*}

\begin{Claim}\label{Claim: A1 and A2 analytic}
$A_1$ and $A_2$ are disjoint analytic $\equiv$-invariant sets.
\end{Claim}
\begin{claimproof}
    That $A_1$ and $A_2$ are $\equiv$-invariant is immediate from the way they are defined. Next, we write
    \[
        x \in A_1 \iff \forall\, n \in \N,\, \exists\, y_1, \dots, y_n \in V(L) \, \big(y_1, \, \ldots, \, y_n \text{ are distinct and } \forall i \in [n] \, (x \equiv y_i)\big),
    \]
    which shows that $A_1$ is a countable intersection of analytic sets, so it is itself analytic. 
 To see that $A_2$ is analytic, recall that by Lemma \ref{lemma: equiv relation is line graph decomp relation}, $R|_{[x]}$ is a line graph relation 
 if and only if:
\begin{enumerate}[label=\ep{\normalfont{}\arabic*}]
    \item\label{item:equivalence} $R|_{[x]}$ is an equivalence relation,
    \item\label{item:clique} each equivalence class of $R|_{[x]}$ is the edge set of a clique in $[x]$, and
    \item\label{item:2} for each vertex $y \equiv x$, $y$ is incident to at most two equivalence classes of $R|_{[x]}$. 
\end{enumerate}
For the first condition, we have:
\begin{align*}
    \ref{item:equivalence} \iff \forall\, e, f, g \in E([x])\,
    \bigg(&\, e \, R \, e,\ e \, R \, f \Rightarrow f \, R \, e,\     
    \Big(e \, R \, f \text{ and }f \, R \, g
    \Big)\, \Rightarrow e \, R \, g  \bigg).
\end{align*}
For the second condition, assuming \ref{item:equivalence} holds, we have
\begin{align*}
    \ref{item:clique} \iff \forall\, e = \set{a,b}, f = \set{c,d} \in E([x]) \, \bigg(e \, R \, f \Rightarrow  &\Big(a = c \text{ or } \big(\set{a,c} \in E(L), \, \set{a,c} \,R\, e \big) \Big) \bigg). 
\end{align*}
For the third condition, assuming $\ref{item:equivalence}$ holds, we have
\begin{align*}
    \ref{item:2} \iff \lnot \Bigg( \exists\, y \equiv x,\, \exists\, e, f, g \in E([x])\, \Big(y \in e \cap f\cap g,\, e \, R^c \, f,\, e \, R^c \, g,\, f \, R^c \, g \Big)\Bigg).
\end{align*}
Since $R$ is Borel and the relation
\begin{align*}
    e \in E([x]) \iff \exists\, u \equiv x \, (u \in e),
\end{align*}
is analytic, these three conditions define coanalytic sets, and thus $A_2$ is analytic.

Finally, to see that $A_1$ and $A_2$ are disjoint, let $H$ be an infinite component of $L$. By Claim~\ref{Claim: simone subset of simtwo}, $R_1|_H =\, R_2|_H = {\sim_{L}}|_H$. Since $R_1 \subseteq R \subseteq R_2$, it follows that $R|_H = {\sim_{L}}|_H$. In particular, $R|_H$ is a line graph relation on $H$, so $H$ is contained in $A_2^c$, as desired. 
\end{claimproof}


By Claim \ref{Claim: A1 and A2 analytic}, we may apply {invariant analytic separation} (Lemma~\ref{Lemma: invariant analytic separation}) with $X = V(L)$, $E = {\equiv}$, $Y = A_1$, and $Z = A_2$ to obtain a Borel ${\equiv}$-invariant set $B \subseteq V(L)$ such that $A_1 \subseteq B \subseteq A_2^c$.
Since $B \subseteq A_2^c$, it follows that $R|_{B}$ is a Borel line graph relation on $L[B]$. On the other hand, since $A_1 \subseteq B$, every component of $L[B^c]$ is finite. This means that we may employ the Luzin--Novikov theorem to pick, in a Borel way, a single line graph relation on each component of $L[B^c]$ and form a Borel line graph relation $R^*$ on $L[B^c]$. (Since arguments dealing with Borel graphs with finite components in this manner are standard, we defer the details to Appendix \ref{sec:appendix}.) As $B$ is $\equiv$-invariant, we conclude that $R|_{L[B]} \cup R^*$ is a desired Borel line graph relation on $L$.
\end{proof}

\section{Proof of Theorem \ref{Theorem: smoothness implies Borel G}, \ref{item:onesmooth} $\Rightarrow$ \ref{item:Blg}}\label{Section: proof of Theorem smoothness implies Borel G}

\begin{Theorem*}[\ref{Theorem: smoothness implies Borel G}, \ref{item:onesmooth} $\Rightarrow$ \ref{item:Blg}] 
Let $L$ be a Borel graph that is a line graph. If $L$ has a smooth line graph relation $\sim$, then $L$ is a Borel line graph.
\end{Theorem*}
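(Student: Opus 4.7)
The plan is to follow the strategy outlined in \S\ref{section: Analyzing smooth line graph relations}. Let $f \colon E(L) \to \R$ be a Borel function witnessing the smoothness of $\sim$, and for each $x \in \im(f)$ let $C_x \subseteq V(L)$ denote the clique of endpoints of edges in the $\sim$-class $f^{-1}(x)$. Define the analytic incidence relation
\[
    R \,\defeq\, \set{(v,x) \in V(L) \times \R \,:\, \exists\, e \in E(L)\, \big(f(e) = x \text{ and } v \in e\big)},
\]
so that for $x \in \im(f)$, $\set{v : (v,x) \in R} = C_x$, and each $v \in V(L)$ has at most two $R$-neighbors in $\R$. The na\"ive attempt to take $V(G) \defeq \R$ and $E(G) \defeq \set{xy : C_x \cap C_y \neq \0}$, with $\phi(xy)$ the unique common vertex, fails because $R$ is only analytic and vertices of $L$ lying in fewer than two $\sim$-classes would be missed by the image of $\phi$.

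The heart of the proof is to construct a Borel relation $R^* \subseteq V(L) \times \R$ satisfying: \ep{i} $R \subseteq R^*$ and $R^* \cap (V(L) \times \im(f)) = R$; \ep{ii} each $v \in V(L)$ has at most two $R^*$-neighbors in $\R$; and \ep{iii} each $x \in \R \setminus \im(f)$ has at most one $R^*$-neighbor in $V(L)$. I would build $R^*$ by an iterated sequence of roughly five applications of the \hyperref[Theorem: Analytic Separation]{analytic separation theorem}. At each stage one identifies an analytic ``bad'' set---for example, the tuples $(v, x_1, x_2, x_3)$ with pairwise distinct $x_i$ all related to $v$ that witness a three-way violation of \ep{ii}, or the tuples $(v, v', x)$ with $v \neq v'$ and $x \notin \im(f)$ that witness a two-way violation of \ep{iii}---and uses analytic separation to replace the current running relation with a Borel set disjoint from the bad set but still containing $R$. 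The bookkeeping is delicate: one must ensure that $R$ and condition \ep{i} survive each round while the fiber bounds \ep{ii} and \ep{iii} are progressively enforced.

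With $R^*$ in hand, I would augment the vertex set to $V(G) \defeq \R \sqcup (V(L) \times \set{0, 1})$, using the auxiliary copies as placeholder second-neighbors for vertices of $L$ with fewer than two $R^*$-neighbors in $\R$ (notably, every isolated vertex of $L$). By \ep{ii} and the \hyperref[Theorem: Luzin--Novikov]{Luzin--Novikov theorem}, the fibers of $R^*$ admit a Borel enumeration, so $R^*$ can be Borel-extended to a relation $\tilde R \subseteq V(L) \times V(G)$ in which every $v \in V(L)$ has exactly two $\tilde R$-neighbors and every auxiliary vertex has exactly one. Properties \ep{ii} and \ep{iii} together with the extension guarantee that for every unordered pair $\set{x, y} \in [V(G)]^2$ at most one $v$ witnesses $v \, \tilde R \, x$ and $v \, \tilde R \, y$; another application of Luzin--Novikov then yields that the candidate edge set $E(G) \defeq \set{\set{x, y} \in [V(G)]^2 \,:\, \exists\, v\, (v\,\tilde R\, x \text{ and } v\,\tilde R\, y)}$ is Borel and that the map $\phi \colon E(G) \to V(L)$ sending each edge to its unique witness is a Borel bijection. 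A direct check using \ep{i} shows that $e \cap e' \neq \0$ in $G$ if and only if $\phi(e)\phi(e') \in E(L)$, so $\phi$ realizes the Borel isomorphism $L(G) \cong_B L$.

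The main obstacle is the construction of $R^*$. Analytic separation only interpolates a Borel set between two disjoint analytic sets, whereas \ep{ii} and \ep{iii} are \emph{global} fiber-size conditions that cannot be imposed in a single step. Engineering the right sequence of auxiliary analytic sets so that their Borel separators assemble into a relation with the desired fiber bounds---while keeping $R$ intact and preserving \ep{i} throughout---is the delicate combinatorial-descriptive bookkeeping at the core of the argument.
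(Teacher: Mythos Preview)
Your proposal is essentially the paper's proof: build a Borel approximation $R^\ast$ (the paper's $R_5$) to the analytic incidence relation $R$ via roughly five or six rounds of analytic separation so that \ep{i}--\ep{iii} hold, then assemble $G$ from $R^\ast$ together with auxiliary vertex copies and check $L(G) \cong_B L$ using Luzin--Novikov. The paper's construction of $G$ is a minor variant of yours (it keeps $V_1$-vertices as vertices of $G$ rather than padding uniformly with $V(L)\times\{0,1\}$), but the content is the same.

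One technical point to flag: your sample bad set for \ep{iii}, ``tuples $(v,v',x)$ with $v \neq v'$ and $x \notin \im(f)$,'' is \emph{not} analytic, since $\im(f)$ is merely analytic and hence its complement is only coanalytic. The paper sidesteps this by never referring to $\im(f)$ directly: its bad sets $B_0$ and $B_1$ force that whenever two distinct vertices $u,v$ are both $R_5$-related to a common $z$, then $uv \in E(L)$ and $f(uv) = z$---which gives $z \in \im(f)$ and hence \ep{iii} by contraposition. You clearly anticipate that the bookkeeping is delicate, but this particular wrinkle is the one place your sketch would need a genuinely different idea when filled in.
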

\begin{proof}
Let $f : E(L) \to \mathbb{R}$ witness the smoothness of $\sim$. Define $R$, $R' \subseteq V(L) \times \mathbb{R}$ by:
\begin{align*}
    v\,R\,x \iff \exists\, uw &\in E(L)\, \big(f(uw) = x \text{ and } (v = u \text{ or } v = w)\big),\\
    v\,R'\,x \iff \forall\, uw &\in E(L) \, \Big(f(uw) = x \Rightarrow \\
    &\hspace{1in}\big((vu \in E(L),\, f(vu) = x) \text{ or } (vw \in E(L),\, f(vw) = x)\big)\Big).
\end{align*}
Note that $R \subseteq R'$ and $R' \setminus R = \{(v,x) 
 \in V(L) \times \R \,:\, x \notin \im(f)\}$.  
Since $f$ is Borel, it follows that $R$ is analytic and $R'$ is coanalytic.

As $R \subseteq R'$, {analytic separation} yields a Borel set $R_0$ such that
$R \subseteq R_0 \subseteq R'$. Define $B_0 \subseteq R_0$ by
\[B_0 \,\defeq\, \{(v,z)\in R_0 \,:\, \exists\, u \in V(L)\, ((u,z) \in R_0,\, uv \in E(L),\, f(uv)\neq z) \}. \]
Since $R_0$ is Borel, $B_0$ is analytic. Note that if $(v,z) \in R$, then $z \in \im(f)$, so if $u$ is a neighbor of $v$ such that $(u,z) \in R_0$, then $(u,z) \in R$ as well. Thus both $u$ and $v$ are incident to edges that are mapped by $f$ to $z$. Since $f^{-1}(z)$ is the edge set of a clique, it follows that $f(uv) = z$ for all such $u$, so $(v,z) \notin B_0$. In other words, $R \cap B_0 = \0$.

By analytic separation, there is a Borel set $R_1$ with
$R \subseteq R_1 \subseteq R_0 \setminus B_0$.
Define $B_1 \subseteq R_1$ by
\[B_1 \,\defeq\, \{(v,z) \in R_1 \,:\, \exists\, u \in V(L)\, (u \neq v,\, uv \notin E(L),\, (u,z) \in R_1)\}.\]
Again, $B_1$ is analytic. Moreover, for each $z \in \im(f)$, the vertices of $L$ to which $z$ is $R$-related form a clique, so $R \cap B_1 = \0$. Thus, by analytic separation, there is a Borel set $R_2$ with
$R \subseteq R_2 \subseteq R_1 \setminus B_1$.

Next we define a subset $B_2 \subseteq R_2$ by
\[B_2 \,\defeq\, \{(v,z)\in R_2 \,:\, \exists\, x,y \in \R\, ((v,x), (v,y) \in R,\, |\{x, y, z\}| = 3 )\}. \]
Since $R_2$ is Borel and $R$ is analytic, $B_2$ is analytic. Furthermore, each vertex of $L$ can be $R$-related to at most two elements of $\R$, so $R \cap B_2 = \0$. By analytic separation, there exists a Borel set $R_3$ with
$R \subseteq R_3 \subseteq R_2\setminus B_2$. Let $B_3 \subseteq R_3$ be defined as follows:
\[B_3 \,\defeq\, \{(v,z)\in R_3 \,:\, \exists\, x,y \in \R\, ((v,x) \in R,\, (v,y) \in R_3,\, |\{x, y, z\}| = 3 )\}. \]
Since $R_3$ is Borel and $R$ is analytic, $B_3$ is analytic. Take any $(v,z) \in B_3$ with $(v,x) \in R$, $(v,y) \in R_3$, and $|\{x, y, z\}| = 3$. 
If $(v,z) \in R$, then $(v,y) \in B_2$, and thus $(v,y) \notin R_3$, which is a contradiction. Thus, $R \cap B_3 = \0$. By analytic separation, there is a Borel set $R_4$ with $R \subseteq R_4 \subseteq R_3\setminus B_3$. Let
\[B_4 \,\defeq\, \{(v,z)\in R_4 \,:\, \exists\, x,y \in \R\, ((v,x) \in R_4,\, (v,y) \in R_4,\, |\{x, y, z\}| = 3 )\}. \]
Since $R_4$ is Borel, $B_4$ is analytic. If $(v,z) \in B_4$ with $(v,x) \in R_4$, $(v,y) \in R_4$, and $|\{x, y, z\}| = 3$, and $(v,z) \in R$, then $(v,x) \in B_3$, which is impossible. Therefore, $R \cap B_4 = \0$.

Finally, analytic separation yields a Borel set $R_5$ with $R \subseteq R_5 \subseteq R_4\setminus B_4$. Observe that $R_5$ has the following properties:
\begin{enumerate}[label=\ep{\normalfont\roman*}]
    \item\label{R_5 property 1} $R \subseteq R_5 \subseteq R'$.
\end{enumerate}
This is clear from the construction.
\begin{enumerate}[label=\ep{\normalfont\roman*},resume]
    \item\label{R_5 property 2} Every vertex of $L$ $R_5$-relates to at most two elements of $\R$.
\end{enumerate}
This follows since $R_5 \cap B_4 = \0$.
\begin{enumerate}[label=\ep{\normalfont\roman*},resume]
    \item\label{R_5 property 3} Every element of $\R \setminus \im(f)$ $R_5$-relates to at most one vertex of $L$.
\end{enumerate}
Indeed, suppose $z \in \R$ $R_5$-relates to two different vertices $u$, $v \in V(L)$. Since $R_5 \cap B_1 = \0$, it follows that $uv \in E(L)$. Then, since $R_5 \cap B_0 = \0$, we have $f(uv) = z$, i.e., $z \in \im(f)$, as claimed.

\begin{figure}[t]
\begin{tikzpicture}[yscale=1.1]


\node[circle,fill=black,inner sep=0pt,minimum size=4pt] (v1) at (-.3,-.2) {};
\node[circle,fill=black,inner sep=0pt,minimum size=4pt] (v2) at (0.3,0.2) {};
\node[circle,fill=black,inner sep=0pt,minimum size=4pt] (v3) at (.6,-0.2) {};
\node[circle,fill=black,inner sep=0pt,minimum size=4pt] (v4) at (1.2,-0.5) {};
\node[circle,fill=black,inner sep=0pt,minimum size=4pt] (v5) at (2,0.2) {};
\node[circle,fill=black,inner sep=0pt,minimum size=4pt] (v6) at (2.7,-.2) {};

\node[circle,fill=black,inner sep=0pt,minimum size=4pt] (v7) at (3.5,0) {};
\node[circle,fill=black,inner sep=0pt,minimum size=4pt] (v8) at (4.2,0) {};
\node[circle,fill=black,inner sep=0pt,minimum size=4pt] (v9) at (5,0) {};

\node[circle,fill=black,inner sep=0pt,minimum size=4pt] (x1) at (0,-1.5) {};
\node[circle,fill=black,inner sep=0pt,minimum size=4pt] (x2) at (1,-1.5) {};
\node[circle,fill=black,inner sep=0pt,minimum size=4pt] (x3) at (2,-1.5) {};

\node[circle,fill=black,inner sep=0pt,minimum size=4pt] (x5) at (4,-1.5) {};
\node[circle,fill=black,inner sep=0pt,minimum size=4pt] (x6) at (5,-1.5) {};
\node[circle,fill=black,inner sep=0pt,minimum size=4pt] (x7) at (6,-1.5) {};
\node[circle,fill=black,inner sep=0pt,minimum size=4pt] (x4) at (3,-1.5) {};

\node[anchor=south,scale=0.8] at (v1) {$v_1$};
\node[anchor=south,scale=0.8] at (v2) {$v_2$};
\node[anchor=south,scale=0.8] at (v3) {$v_3$};
\node[anchor=south,scale=0.8] at (v4) {$v_4$};
\node[anchor=south,scale=0.8] at (v5) {$v_5$};
\node[anchor=south,scale=0.8] at (v6) {$v_6$};
\node[anchor=south,scale=0.8] at (v7) {$v_7$};
\node[anchor=south,scale=0.8] at (v8) {$v_8$};
\node[anchor=south,scale=0.8] at (v9) {$v_9$};

\node[anchor=north,scale=0.8] at (x1) {$x_1$};
\node[anchor=north,scale=0.8] at (x2) {$x_2$};
\node[anchor=north,scale=0.8] at (x3) {$x_3$};
\node[anchor=north,scale=0.8] at (x4) {$x_4$};
\node[anchor=north,scale=0.8] at (x5) {$x_5$};
\node[anchor=north,scale=0.8] at (x6) {$x_6$};
\node[anchor=north,scale=0.8] at (x7) {$x_7$};

\draw[very thick] (v1) -- (v2) -- (v3) -- (v1) (v3) -- (v4) -- (v5) -- (v3) (v5) -- (v6);

\draw[dashed,thick] (v1) -- (x1) (v2) -- (x1) (v3) -- (x1) (v3) -- (x2) (v4) -- (x2) (v5) -- (x2) (v5) -- (x3) (v6) -- (x3);

\draw[dotted,thick] (v6) -- (x4) (v7) -- (x5) (v7) -- (x6) (v9) -- (x7);


\draw [decorate, 
    decoration = {brace,
        raise=3pt, amplitude=5pt,
        aspect=.5}] (2.2,-1.8) --  (-.2,-1.8)
node[pos=0.5,below=5pt,black]{$\im(f)$};



\draw[rounded corners] (-0.6, -.7) rectangle (6, .6) {};
\node at (-1,-0.05) {$L$};

\draw[rounded corners] (-0.6, -1.85) rectangle (6.5, -1.3) {};
\node at (-1,-1.575) {$\R$};



\begin{scope}[xshift=8.5cm]
    \node[circle,fill=black,inner sep=0pt,minimum size=4pt] (v1) at (-.3,-.2) {};
\node[circle,fill=black,inner sep=0pt,minimum size=4pt] (v2) at (0.3,0.2) {};
\node[circle,fill=black,inner sep=0pt,minimum size=4pt] (v4) at (1.2,-0.5) {};

\node[circle,fill=black,inner sep=0pt,minimum size=4pt] (0v8) at (3,-0.2) {};
\node[circle,fill=black,inner sep=0pt,minimum size=4pt] (1v8) at (4,0) {};
\node[circle,fill=black,inner sep=0pt,minimum size=4pt] (v9) at (5,0) {};

\node[circle,fill=black,inner sep=0pt,minimum size=4pt] (x1) at (0,-1.5) {};
\node[circle,fill=black,inner sep=0pt,minimum size=4pt] (x2) at (1,-1.5) {};
\node[circle,fill=black,inner sep=0pt,minimum size=4pt] (x3) at (2,-1.5) {};

\node[circle,fill=black,inner sep=0pt,minimum size=4pt] (x5) at (4,-1.5) {};
\node[circle,fill=black,inner sep=0pt,minimum size=4pt] (x6) at (5,-1.5) {};
\node[circle,fill=black,inner sep=0pt,minimum size=4pt] (x7) at (6,-1.5) {};
\node[circle,fill=black,inner sep=0pt,minimum size=4pt] (x4) at (3,-1.5) {};

\node[anchor=south,scale=0.8] at (v1) {$v_1$};
\node[anchor=south,scale=0.8] at (v2) {$v_2$};
\node[anchor=south,scale=0.8] at (v4) {$v_4$};
\node[anchor=south,scale=0.8] at (0v8) {$(0,v_8)$};
\node[anchor=south,scale=0.8] at (1v8) {$(1,v_8)$};
\node[anchor=south,scale=0.8] at (v9) {$v_9$};

\node[anchor=north,scale=0.8] at (x1) {$x_1$};
\node[anchor=north,scale=0.8] at (x2) {$x_2$};
\node[anchor=north,scale=0.8] at (x3) {$x_3$};
\node[anchor=north,scale=0.8] at (x4) {$x_4$};
\node[anchor=north,scale=0.8] at (x5) {$x_5$};
\node[anchor=north,scale=0.8] at (x6) {$x_6$};
\node[anchor=north,scale=0.8] at (x7) {$x_7$};

\draw[very thick] (v1) -- (x1) (v2) -- (x1) (x1) -- (x2) (v4) -- (x2) (x2) -- (x3) (x3) -- (x4) (x5) -- (x6) (v9) -- (x7) (0v8) -- (1v8);

\node at (3, -2.2) {$G$};
\end{scope}

\node[scale=3] at (7, -0.8) {$\rightsquigarrow$};

\end{tikzpicture}
\caption{The construction of $G$ using the relation $R_5$. Here the dashed edges represent the relation $R$ and the dotted ones represent the relation $R_5 \setminus R$. In this example, $V_0 = \set{v_8}$, $V_1 = \set{v_1, v_2, v_4, v_9}$, and $V_2 = \set{v_3, v_5, v_6, v_7}$.}
\label{Figure: L to G}
\end{figure}
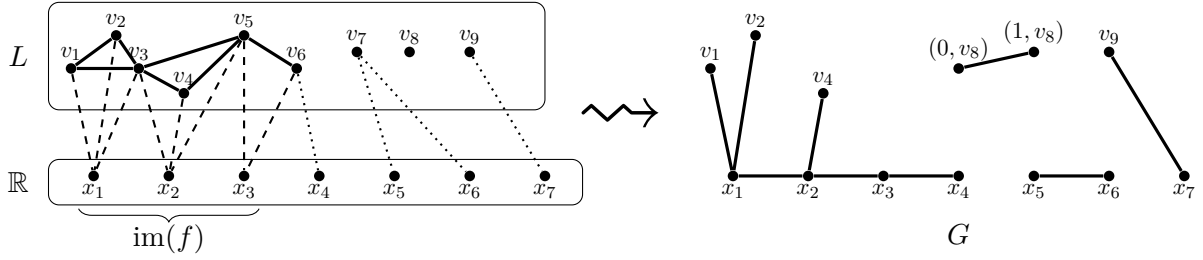

Having found a Borel relation $R_5$ satisfying conditions \ref{R_5 property 1}--\ref{R_5 property 3}, we can now define a Borel graph $G$ such that $L(G) \cong_B L$. To this end, let
\begin{align*}
V_0 \,&\defeq\, \{v \in V(L) \,:\, \forall\, x \in \R\, ((v,x) \notin R_5)\}, \\
V_1 \,&\defeq\, V(L) \setminus (V_0 \cup V_2), \text{ where}\\
V_2 \,&\defeq\, \{v \in V(L) \,:\, \exists\, x, y \in \R\, ((v,x), (v,y) \in R_5,\, x \neq y)\}.
\end{align*}
As $R_5$ is Borel, the {Luzin--Novikov theorem} together with property \ref{R_5 property 2} of $R_5$ shows that $V_0$ and $V_2$ are Borel sets, and thus $V_1$ is Borel as well. Without loss of generality (e.g., by replacing $V(L)$ with $\set{2} \times V(L)$), we may assume that the sets $\R$, $V(L)$, and $\set{0,1}\times V(L)$ are disjoint. We then construct $G$ as follows:
\[
    V(G) \,\defeq\, (\set{0,1} \times V_0) \cup V_1 \cup \R, \quad\quad E(G) \,\defeq\, E_0 \cup E_1 \cup E_2,
\]
where
\begin{align*}
    E_0\,&\defeq\, \Big\{\{(0,v), (1,v)\} \,:\, v \in V_0\Big\},\\
    E_1 \,&\defeq\, \Big\{\{v,x\} \,:\, v \in V_1,\, (v,x) \in R_5 \Big\},\\
    E_2 \,&\defeq\, \Big\{\{x,y\} \,:\, x \neq y \text{ and } \exists\, v \in V_2\, \big((v,x), (v,y) \in R_5\big)\Big\}.
\end{align*}
This construction is illustrated in Fig.~\ref{Figure: L to G}. To see that $G$ is a Borel graph, we need to verify that the sets $E_0$, $E_1$, and $E_2$ are Borel. For $E_0$ and $E_1$, this is clear. For $E_2$, notice that if $x$, $y \in \im(f)$ are distinct, then there is at most one vertex $v \in V_2$ such that $(v,x)$, $(v,y) \in R_5$, namely the common vertex of the cliques $f^{-1}(x)$ and $f^{-1}(y)$. On the other hand, if, say, $x \in \R \setminus \im(f)$, then by \ref{R_5 property 3}, there is at most one vertex $v$ such that $(v,x) \in R_5$. In either case, there is at most one vertex $v$ with $(v,x)$, $(v,y) \in R_5$ and hence, by the Luzin--Novikov theorem, the set $E_2$ is Borel. 

To argue that $L \cong_B L(G)$, we define a Borel isomorphism $\phi$ from $L(G)$ to $L$ as follows. For $\{(0,v), (1, v)\} \in E_0$, let
$
\phi(\{(0,v), (1, v)\}) \defeq v \in V_0
$,
for $\{v,x\} \in E_1$, define
$\phi(\{v,x\}) \defeq v \in V_1$,
and for $\{x,y\} \in E_2$, let $\phi(\set{x,y})$ be the unique $v \in V_2$ such that $(v,x)$, $(v,y) \in R_5$. 
It is immediate from the definition that $\phi$ is indeed a desired Borel isomorphism.
\end{proof}

\section{Finishing the proof}\label{Section: reduction to locally countable case}

Recall that a subset of a topological space is \emphd{meager} if it is a union of countably many nowhere dense sets. We shall use the following result of Miller \cite{miller2011classicalProofOfTheKZcanonization}:

\begin{Theorem}[{Miller \cite[Proposition 3]{miller2011classicalProofOfTheKZcanonization}}]\label{Theorem:meager}
    Let $R \subseteq \mathcal{C}^2$ be a meager set. Then there exists a continuous injective homomorphism $\rho : \mathcal{C} \to \mathcal{C}$ from $(\E_0^c,\, \E_0)$ to $(R^c,\, \E_0)$. 
\end{Theorem}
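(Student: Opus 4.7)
The plan is a fusion (Mycielski-style) construction of $\rho$ in ``block-coding'' form: for each $n \in \N$, I will choose two finite binary strings $w_n^0, w_n^1$ of the same length with $w_n^0 \neq w_n^1$, and set
\[
    \rho(\alpha) \,\defeq\, w_0^{\alpha(0)}\, w_1^{\alpha(1)}\, w_2^{\alpha(2)} \cdots.
\]
A map of this form is automatically continuous; it is injective because the $n$-th block of $\rho(\alpha)$ determines $\alpha(n)$; and it is an $\E_0$-homomorphism because the equal-length constraint keeps blocks in $\rho(\alpha)$ and $\rho(\beta)$ aligned, so eventual agreement of $\alpha$ and $\beta$ propagates to eventual agreement of $\rho(\alpha)$ and $\rho(\beta)$. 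All the work therefore goes into arranging the remaining condition: $\alpha \not\E_0 \beta \Longrightarrow (\rho(\alpha), \rho(\beta)) \notin R$.

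Write $R = \bigcup_{n \in \N} F_n$ as an increasing union of closed nowhere dense sets (take closures and unions if necessary). For $s \in \set{0,1}^{n+1}$, let $W_n^s \defeq w_0^{s(0)} w_1^{s(1)} \cdots w_n^{s(n)}$. I will construct $(w_n^0, w_n^1)$ by recursion on $n$ so that the following condition holds:
\[
    (\star_n) \quad \forall\, s, t \in \set{0,1}^{n+1} \text{ with } s(n) \neq t(n):\ \bigl[W_n^s\bigr] \times \bigl[W_n^t\bigr] \cap F_n = \emptyset.
\]
At stage $n$, with $w_0^0, \ldots, w_{n-1}^1$ already fixed, I begin with an arbitrary short pair $(w_n^0, w_n^1)$ of equal length and then go through the finitely many offending rectangles in $(\star_n)$ one at a time: for each rectangle that meets $F_n$, I use the nowhere-density of $F_n$ to extend $w_n^0$ and $w_n^1$ by the same number of bits so as to shrink that rectangle off $F_n$. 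Already-handled rectangles only shrink further, so they remain disjoint from $F_n$. If at the end $w_n^0 = w_n^1$, I append opposite bits to them to ensure $w_n^0 \neq w_n^1$; this final refinement preserves $(\star_n)$.

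Once $(\star_n)$ is achieved for every $n$, the verification is immediate. Fix $\alpha, \beta \in \mathcal{C}$ with $\alpha \not\E_0 \beta$ and any $m \in \N$: since $\alpha$ and $\beta$ differ at infinitely many positions, there exists $n \geq m$ with $\alpha(n) \neq \beta(n)$. Setting $s \defeq \alpha|_{n+1}$ and $t \defeq \beta|_{n+1}$, the pair $(\rho(\alpha), \rho(\beta))$ lies in $[W_n^s] \times [W_n^t]$, which by $(\star_n)$ is disjoint from $F_n \supseteq F_m$. Hence $(\rho(\alpha), \rho(\beta)) \notin F_m$; as $m$ was arbitrary, $(\rho(\alpha), \rho(\beta)) \notin R$.

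The main obstacle is the inductive step: a single pair $(w_n^0, w_n^1)$ must simultaneously push all $O(4^n)$ rectangles in $(\star_n)$ off $F_n$ subject to the rigid constraint $|w_n^0| = |w_n^1|$. This equal-length constraint is what makes $\rho$ respect $\E_0$ and is indispensable, but it is accommodated by always extending $w_n^0$ and $w_n^1$ by the same number of additional bits; the argument then reduces to composing finitely many nowhere-density refinements, which is routine.
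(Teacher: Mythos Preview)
The paper does not prove this theorem; it simply quotes it as Proposition~3 of Miller's paper and uses it as a black box. Your proposal therefore cannot be compared to a proof in the paper, but it is a correct, self-contained argument and in fact follows the standard Mycielski-type fusion scheme that Miller himself uses: block-coding $\rho$ via equal-length pairs $(w_n^0,w_n^1)$, with the equal-length constraint ensuring $\E_0 \to \E_0$, and a finite iteration of nowhere-density refinements at each stage to kill the finitely many relevant rectangles against $F_n$. The only point worth making explicit is why the equal-length constraint is compatible with the nowhere-density step: once you find a basic open box $[p]\times[q]$ inside the current rectangle avoiding $F_n$, you may pad the shorter of $p,q$ arbitrarily to equalize lengths without leaving $F_n^c$, since $F_n$ is closed; you implicitly use this but it is the one place a reader might pause.
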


With Theorem~\ref{Theorem:meager} in hand, we can prove Lemma~\ref{lemma:hom}:

\begin{Lemma*}[\ref{lemma:hom}] 
Let $X$ be a standard Borel space and let $E \subseteq X^2$ be a nonsmooth Borel equivalence relation on $X$. If $R \subseteq X^2$ is a Borel set such that for each $x \in X$, the restriction of $E$ to the set
    \[
        R(x) \,\defeq\, \set{y \in X \,:\, x\,R\, y}
    \]
    is smooth, then there is a Borel injective homomorphism $\rho \colon \mathcal{C} \to X$ from $(\E_0^c,\, \E_0)$ to $(R^c,\, E)$.
\end{Lemma*}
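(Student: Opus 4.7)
The plan is to combine the $\E_0$-dichotomy (Theorem~\ref{Theorem: E0 dichotomy}) with Miller's Theorem~\ref{Theorem:meager}. Since $E$ is a nonsmooth Borel equivalence relation, the $\E_0$-dichotomy yields a Borel injection $\rho_0 \colon \mathcal{C} \to X$ with $\alpha \, \E_0 \, \beta \iff \rho_0(\alpha) \, E \, \rho_0(\beta)$. The strategy is to post-compose $\rho_0$ with a continuous map $\pi \colon \mathcal{C} \to \mathcal{C}$ so that $\rho \defeq \rho_0 \circ \pi$ additionally carries $\E_0$-inequivalent pairs to $R$-unrelated pairs.

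To find $\pi$, I would pull $R$ back along $\rho_0$, setting $R' \defeq \{(\alpha, \beta) \in \mathcal{C}^2 \,:\, \rho_0(\alpha) \, R \, \rho_0(\beta)\}$, which is a Borel subset of $\mathcal{C}^2$. Granting that $R'$ is meager, Miller's Theorem~\ref{Theorem:meager} applied to $R'$ yields a continuous injective homomorphism $\pi \colon \mathcal{C} \to \mathcal{C}$ from $(\E_0^c, \E_0)$ to $((R')^c, \E_0)$. The map $\rho \defeq \rho_0 \circ \pi$ would then be the desired embedding: it is Borel and injective as a composition of Borel injections; if $\alpha \, \E_0 \, \beta$, then $\pi(\alpha) \, \E_0 \, \pi(\beta)$ and hence $\rho(\alpha) \, E \, \rho(\beta)$ by the embedding property of $\rho_0$; and if $\alpha \, \E_0^c \, \beta$, then $(\pi(\alpha), \pi(\beta)) \notin R'$, i.e., $\rho(\alpha) \, R^c \, \rho(\beta)$ by the definition of $R'$.

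The main obstacle is verifying that $R'$ is meager. By the Kuratowski--Ulam theorem, it suffices to prove that every vertical fiber $R'(\alpha) = \rho_0^{-1}(R(\rho_0(\alpha)))$ is meager in $\mathcal{C}$. Since $\rho_0$ is a Borel isomorphism from $(\mathcal{C}, \E_0)$ onto $(\rho_0(\mathcal{C}), E|_{\rho_0(\mathcal{C})})$, the restriction $\E_0|_{R'(\alpha)}$ is isomorphic to $E|_{R(\rho_0(\alpha)) \cap \rho_0(\mathcal{C})}$, which is smooth as a restriction of $E|_{R(\rho_0(\alpha))}$ (smooth by hypothesis). Hence matters reduce to the following classical fact about $\E_0$: \emph{every Borel set $A \subseteq \mathcal{C}$ on which $\E_0$ is smooth is meager}.

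To establish this fact, I would use the witness of smoothness together with a standard Borel selection (available because $\E_0$-classes are countable, e.g., via Luzin--Novikov) to extract a Borel transversal $T \subseteq A$ for $\E_0|_A$. For each finite $F \subseteq \N$, let $\sigma_F \colon \mathcal{C} \to \mathcal{C}$ be the homeomorphism flipping the bits indexed by $F$; then $\sigma_F(\beta) \, \E_0 \, \beta$ for every $\beta$, so $\sigma_F(T) \cap T = \emptyset$ whenever $F \neq \emptyset$. A standard Baire category argument (if $T$ were comeager in some basic clopen set $[s]$, then for $n \geq |s|$ the homeomorphism $\sigma_{\{n\}}$ would preserve $[s]$ and produce a second comeager subset of $[s]$ disjoint from $T$, a contradiction) forces $T$ to be meager. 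Since $A \subseteq [T]_{\E_0} = \bigcup_{F} \sigma_F(T)$ is a countable union of meager sets, $A$ is meager as well, completing the proof.
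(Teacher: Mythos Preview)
Your proof is correct and follows essentially the same route as the paper: pull the problem back to $(\mathcal{C},\E_0)$ via the $\E_0$-dichotomy, use smoothness of the fibers plus the fact that $\E_0$-smooth Borel sets are meager to see (via Kuratowski--Ulam) that the pulled-back relation is meager, and then invoke Miller's Theorem~\ref{Theorem:meager}. The only cosmetic differences are that the paper phrases the reduction as ``without loss of generality $X=\mathcal{C}$ and $E=\E_0$'' rather than working with an explicit pullback, and that it cites the meagerness fact from the literature whereas you supply the standard transversal/Baire-category argument yourself.
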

\begin{proof}
    By the $\E_0$-dichotomy, there is a Borel embedding $f \colon \mathcal{C} \to X$ from $\E_0$ to $E$. Since $f$ is injective, its image is Borel, so we may, without loss of generality, replace $X$ by $\im(f)$ and assume that $f$ is a bijection. (For each $x \in X$, the restriction of $E$ to $R(x) \cap \im(f) \subseteq R(x)$ remains smooth, so the assumptions of the lemma are still satisfied.) Since Borel bijections between standard Borel spaces are isomorphisms \cite[Corollary 15.2]{kechris2012classicalDescriptiveSetTheory}, we may in fact assume that $X = \mathcal{C}$, $f$ is the identity map, and $E = \E_0$. If $A \subseteq \mathcal{C}$ is a Borel set such that $\E_0|_A$ is smooth, then $A$ is meager \cite[Corollary 4.12]{SmoothE0}, so $R(x)$ is meager for all $x \in \mathcal{C}$. By the Kuratowski--Ulam theorem \cite[Theorem 8.41]{kechris2012classicalDescriptiveSetTheory}, it follows that $R$ is a meager subset of $\mathcal{C}^2$. Therefore, we may apply Theorem~\ref{Theorem:meager} to get a continuous (hence Borel) injective homomorphism $\rho : \mathcal{C} \to \mathcal{C}$ from $(\E_0^c,\, \E_0)$ to $(R^c,\, \E_0)$, as desired.
\end{proof}

Now we have all the necessary ingredients to complete the proof of our main result.

\begin{Theorem*}[\ref{Theorem: K0 Dichotomy}]
    Let $L$ be a Borel graph that is a line graph. Then $L$ is a Borel line graph if and only if $L$ does not contain a Borel copy of $\mathbb{K}_0$.
\end{Theorem*}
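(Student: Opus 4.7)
The easy direction is noted in the introduction: $\K_0$ is not a Borel line graph, and being a Borel line graph is preserved under taking Borel induced subgraphs, so a Borel copy of $\K_0$ inside $L$ precludes $L$ from being a Borel line graph. For the converse I argue the contrapositive, following the outline in \S\ref{subsec:reduction_loc_countable}. By Theorem~\ref{Theorem: A Borel line graph decomposition Exists}, $L$ admits a Borel line graph relation ${\sim}$, and by Theorem~\ref{Theorem: smoothness implies Borel G} the failure of $L$ to be a Borel line graph is equivalent to ${\sim}$ being nonsmooth; so assume ${\sim}$ is nonsmooth and aim to extract a Borel copy of $\K_0$. Define the Borel relation $R \subseteq E(L)^2$ by $e \mathrel{R} e'$ iff some endpoint of $e$ equals or is $L$-adjacent to some endpoint of $e'$. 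Since any two distinct $\sim$-related edges lie in a common clique of the line graph decomposition and hence share a vertex, one has ${\sim} \subseteq R$. The crucial step is to verify the hypothesis of Lemma~\ref{lemma:hom} with $X = E(L)$ and $E = {\sim}$: for every $e \in E(L)$, the restriction ${\sim}|_{R(e)}$ is smooth.

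Granting that, Lemma~\ref{lemma:hom} produces an injective Borel homomorphism $\rho \colon \mathcal{C} \to E(L)$ from $(\E_0^c, \E_0)$ to $(R^c, {\sim})$. Combining the homomorphism property, ${\sim} \subseteq R$, and injectivity of $\rho$ yields: (a) $\rho(\alpha) \sim \rho(\beta) \iff \alpha\,\E_0\,\beta$ for all $\alpha, \beta \in \mathcal{C}$, and (b) if $\alpha\,\E_0^c\,\beta$, then no endpoint of $\rho(\alpha)$ is adjacent or equal to an endpoint of $\rho(\beta)$. Set $V_0 \defeq \bigcup_{\alpha \in \mathcal{C}} \rho(\alpha) \subseteq V(L)$. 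For each $v \in V_0$, the set $\set{\alpha \in \mathcal{C} \,:\, v \in \rho(\alpha)}$ is contained in at most two $\E_0$-classes---one per $\sim$-class through $v$, via~(a)---and is therefore countable, so by the Luzin--Novikov theorem $V_0$ is Borel. Let $H \defeq L[V_0]$. Using (a) and (b), for each $\E_0$-class $\tau$ the set $V_\tau \defeq \bigcup_{\alpha \in \tau} \rho(\alpha)$ is contained in a single clique of the decomposition and is therefore a clique in $L$, while vertices drawn from distinct $V_\tau, V_{\tau'}$ are non-adjacent and disjoint. Hence $H$ is a vertex-disjoint union of cliques indexed by the $\E_0$-classes, and its component equivalence relation $\equiv_H$ coincides with $\Delta(V_0) \cup E(H)$ and is Borel.

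If $\equiv_H$ were smooth with Borel witness $g \colon V_0 \to Y$, then composing with any Borel endpoint selector $\pi \colon \im\rho \to V(L)$ produces $\alpha \mapsto g(\pi(\rho(\alpha)))$, a Borel function witnessing smoothness of $\E_0$---a contradiction. So $\equiv_H$ is nonsmooth, and the $\E_0$-dichotomy (Theorem~\ref{Theorem: E0 dichotomy}) yields an injective Borel map $\psi \colon \mathcal{C} \to V_0$ embedding $\E_0$ into $\equiv_H$. Since $H$ is a disjoint union of cliques, for $\alpha \neq \beta$ one has $\set{\psi(\alpha), \psi(\beta)} \in E(L) \iff \psi(\alpha) \equiv_H \psi(\beta) \iff \alpha\,\E_0\,\beta$, so $\psi$ is a Borel isomorphism from $\K_0$ onto $L[\im\psi]$, giving the desired Borel copy of $\K_0$ in $L$ and completing the contradiction.

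The hardest step I anticipate is verifying that ${\sim}|_{R(e)}$ is smooth for each $e = \set{u,v} \in E(L)$. The combinatorial heuristic is clean: every $\sim$-class meeting $R(e)$ contains an edge through some vertex of $\set{u,v} \cup N(u) \cup N(v)$, and each such vertex is incident to at most two $\sim$-classes, so the $\sim$-class of $e' \in R(e)$ ought to be pinned down by finite Boolean data such as ``$e' \sim e$'' or ``$e' \sim wu$'' for an appropriately chosen $w$. Turning this heuristic into a single Borel witness uniformly in $e' \in R(e)$ requires a careful partition of $R(e)$ into Borel pieces based on how the endpoints of $e'$ sit relative to $\set{u,v} \cup N(u) \cup N(v)$, together with a consistent Borel labeling of the at most two $\sim$-classes through each such vertex; this is the principal technical subtlety of the construction.
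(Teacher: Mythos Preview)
Your outline is essentially the paper's proof, and the parts you execute (applying Lemma~\ref{lemma:hom}, building $H$, showing $\equiv_H$ is Borel and nonsmooth, and invoking the $\E_0$-dichotomy) are carried out the same way there. For the step you explicitly leave open---smoothness of ${\sim}|_{R(e)}$ for $e = uv$---the paper avoids your proposed vertex-by-vertex labeling (which, as you anticipate, is awkward because $N(u)\cup N(v)$ may be uncountable) and instead groups according to the at most three ambient $\sim$-classes $C_{uv}$, $C_u$, $C_v$ incident to $u$ or $v$: for a $\sim$-class $C$ one sets $S(C) \defeq \{e' \in E(L)\setminus C : V(C_{e'}) \cap V(C) \neq \varnothing\}$, observes that $e' \mapsto$ (the unique vertex of $V(C_{e'}) \cap V(C)$) is a Borel witness to the smoothness of ${\sim}|_{S(C)}$, and checks that $R(uv) \subseteq C_{uv} \cup S(C_{uv}) \cup S(C_u) \cup S(C_v)$, after which four smooth pieces glue to give the claim.
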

\begin{proof}
    By Theorem~\ref{Theorem: A Borel line graph decomposition Exists}, there is a Borel line graph relation $\sim$ on $L$. If $\sim$ is smooth, then $L$ is a Borel line graph by Theorem~\ref{Theorem: smoothness implies Borel G}, and thus contains no Borel copies of $\K_0$. Now suppose that $\sim$ is nonsmooth. Our goal is to show that $L$ contains a Borel copy of $\K_0$.

    As $\sim$ is Borel, every $\sim$-class is a Borel subset of $E(L)$. For a $\sim$-class $C$, let $V(C) \subseteq V(L)$ be the set of all vertices incident to an edge in $C$. The set $V(C)$ is Borel since, fixing an arbitrary edge $e \in C$ and a vertex $x \in e$, we can write
    \[
        V(C) \,=\, \set{w \in V(L) \,:\, w = x \text{ or } wx \sim e}.
    \]
    For an edge $e \in E(L)$, let $C_{e}$ be the $\sim$-class containing $e$, and for each $\sim$-class $C$, define
    \[
        S(C) \,\defeq\, \set{e \in E(L) \,:\, e \not\in C \text{ and } V(C_{e}) \cap V(C) \neq \0}.
    \]
    \begin{Claim}\label{Claim:smooth1}
        For each $\sim$-class $C$, $S(C)$ is a $\sim$-invariant Borel set and the relation ${\sim}|_{S(C)}$ is smooth.
    \end{Claim}
    \begin{claimproof}
        Fix a $\sim$-class $C$. It is clear from the definition that $S(C)$ is $\sim$-invariant. Observe that
        \[
            S(C) \,=\, \set{uv \in E(L) \setminus C \,:\, \exists\, w \in V(C) \, (w = u \text{ or } wu \sim uv)}.
        \]
        Since for $e \not \in C$, there can be at most one vertex $w$ in $V(C_e) \cap V(C)$, the set $S(C)$ is Borel by the {Luzin--Novikov theorem}. Additionally, the following function $f \colon S(C) \to V(C)$ is Borel:
    \[
        f(e) \,\defeq\, \text{the unique vertex } w \in V(C_e) \cap V(C).
    \]
    If $e$, $e' \in S(C)$ are $\sim$-equivalent, then $f(e) = f(e')$ by construction. Conversely, if $f(e) = f(e') \eqqcolon w$, then $e$ and $e'$ belong to the same $\sim$-class, namely the unique $\sim$-class other than $C$ incident to $w$. In other words, $f$ witnesses the smoothness of ${\sim}|_{S(C)}$, as desired.
    \end{claimproof}

    Define a relation $R \subseteq E(L)^2$ as follows:
    \[
        x_1x_2 \,R\, y_1y_2 \iff \exists\, i,j \in \set{1,2} \,(x_i y_i \in E(L)).
    \]
    \begin{Claim}\label{Claim:smooth_fibers}
        For each $xy \in E(L)$, the restriction of $\sim$ to $R(xy) \defeq \set{e \in E(L) \,:\, xy \,R\,e}$ is smooth. 
    \end{Claim}
\begin{claimproof}
    Fix an edge $xy \in E(L)$ and observe that
    \begin{equation}\label{eq:Rxy}
        R(xy) \,\subseteq\, C_{xy} \,\cup\, S(C_{xy}) \,\cup\, S(C_x) \,\cup\, S(C_y), 
    \end{equation}
    where $C_x$ and $C_y$ are the $\sim$-classes distinct from $C_{xy}$ containing $x$ and $y$ respectively (if $x$ or $y$ is incident to only one $\sim$-class, we let the corresponding set in \eqref{eq:Rxy} be empty). For each $t \in \set{xy, x, y}$, let $f_t \colon S(C_t) \to \R$ witness the smoothness of ${\sim}|_{S(C_t)}$ (such functions $f_t$ exist by Claim~\ref{Claim:smooth1}). Then the following map $f \colon R(xy) \to \set{0,1,2,3} \times \R$ witnesses the smoothness of ${\sim}|_{R(xy)}$:
    \[
        f(e) \,\defeq\, \begin{cases}
            (0,0) &\text{if } e \in C_{xy},\\
            (1,f_{xy}(e)) &\text{if } e \in S(C_{xy}),\\
            (2, f_{x}(e)) &\text{if } e \in S(C_x) \setminus (C_{xy} \cup S(C_{xy})),\\
            (3, f_{y}(e)) &\text{if } e \in S(C_y) \setminus (C_{xy} \cup S(C_{xy}) \cup S(C_x)).
        \end{cases}\qedhere
    \]
    \end{claimproof}

    With Claim~\ref{Claim:smooth_fibers} in hand, we may apply Lemma~\ref{lemma:hom} to obtain a Borel injective homomorphism $\rho \colon \mathcal{C} \to E(L)$ from $(\E_0^c,\, \E_0)$ to $(R^c,\, \sim)$. Since $\rho$ is injective, its image $\im(\rho)$ is a Borel subset of $E(L)$. Let $U \subseteq V(L)$ be the set of all vertices incident to an edge in $\im(\rho)$. Since every $\E_0$-class is countable, each $\sim$-class contains countably many edges in $\im(\rho)$. As every vertex belongs to at most two $\sim$-classes, it is incident to countably many edges in $\im(\rho)$. It follows by the Luzin--Novikov theorem that the set $U$ is Borel, and hence $H \defeq L[U]$ is a Borel induced subgraph of $L$.
    
    Observe that if $xy$, $yz \in E(H)$, then $xy \sim yz$. Indeed, let $\alpha$, $\beta$, $\gamma \in \mathcal{C}$ be such that the edges $\rho(\alpha)$, $\rho(\beta)$, and $\rho(\gamma)$ are incident to $x$, $y$, and $z$ respectively. Then $\rho(\alpha) \,R\, \rho(\beta) \,R\, \rho(\gamma)$, and thus $\alpha \,\E_0\, \beta \, \E_0 \, \gamma$ because $\rho$ is a homomorphism from $\E_0^c$ to $R^c$. Since $\rho$ is a homomorphism from $\E_0$ to $\sim$, we conclude that $\rho(\alpha) \sim \rho(\beta) \sim \rho(\gamma)$. Therefore, $x$, $y$, and $z$ are all incident to the same $\sim$-class, and thus $xy \sim yz$, as desired.

    We conclude that the edge set of every component of $H$ is contained in a single $\sim$-class. Define the relation ${\equiv_H}$ on $V(H)$ by 
\[x \equiv_H y \iff  \text{ $x$ and $y$ are in the same component of $H$}.\]
Since $H$ is locally countable, ${\equiv_H}$ is Borel by the Luzin--Novikov theorem (see~Examples~\ref{Example: Analytic2} and \ref{Example: Luzin--Novikov}). 

\begin{Claim}
    ${\equiv_H}$ is nonsmooth.
\end{Claim} 
\begin{claimproof}
    Suppose to the contrary that ${\equiv_H}$ is smooth, and let $f : V(H) \to 
\R$ witness the smoothness of ${\equiv_H}$. Then $\xi : E(H) \to \R$ defined by $\xi(xy) \defeq f(x)$ is well-defined and witnesses the smoothness of ${\sim}|_H$. But ${\sim}|_H$ is nonsmooth as $\rho$ is an embedding from $\E_0$ to ${\sim}|_H$.
\end{claimproof}

Since ${\equiv_H}$ is nonsmooth, by the $\E_0$-dichotomy there exists a Borel embedding $\phi : \mathcal{C} \to V(H)$ from $\E_0$ to ${\equiv_H}$. As $H$ is a union of disjoint cliques, it follows that $\phi$ is a Borel isomorphism from $\K_0$ to $L[\im(\phi)]$. Therefore, $L$ contains a Borel copy of $\K_0$, as desired. 
\end{proof}

\subsection*{Acknowledgments}

We thank the anonymous referee for their careful reading of this manuscript and helpful suggestions.

\subsection*{Funding}

This research is partially supported by the NSF grant DMS-2045412 and the NSF CAREER grant DMS-2239187.

 \printbibliography

\appendix

\section{Proof of Theorem \ref{Theorem: smoothness implies Borel G}, \ref{item:onesmooth} $\Rightarrow$ \ref{item:allsmooth}}\label{appendix one smooth implies all smooth}

As Corollary \ref{Corollary: nonsingular clique cover is unique} implies two line graph relations differ only on singular components, which are all finite, the statement \ref{item:onesmooth} $\Rightarrow$ \ref{item:allsmooth} is an immediate corollary of the following lemma:

\begin{Lemma}\label{Lemma: E and E'}
    If $E$ and $E'$ are Borel equivalence relations on a standard Borel space $X$ such that $E$ is smooth and every infinite $E'$-class is also an $E$-class, 
    then $E'$ is smooth. 
\end{Lemma}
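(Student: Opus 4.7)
The plan is to argue by contradiction using the $\E_0$-dichotomy (Theorem~\ref{Theorem: E0 dichotomy}). This choice sidesteps a potential measurability issue: the set $\set{x \in X \,:\, [x]_{E'} \text{ is infinite}}$ need not be Borel, so directly splitting $X$ into its finite- and infinite-class parts and handling them separately would be delicate.

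Suppose $E'$ is nonsmooth. Then Theorem~\ref{Theorem: E0 dichotomy} provides an injective Borel function $\rho \colon \mathcal{C} \to X$ with $\alpha\, \E_0\, \beta \iff \rho(\alpha)\, E'\, \rho(\beta)$ for all $\alpha$, $\beta \in \mathcal{C}$. The key observation is that every $E'$-class of the form $[\rho(\alpha)]_{E'}$ is infinite: since each $\E_0$-class is infinite and $\rho$ is injective, $\rho([\alpha]_{\E_0})$ is an infinite subset of $[\rho(\alpha)]_{E'}$. By the hypothesis of the lemma, $[\rho(\alpha)]_{E'}$ is then also an $E$-class, and since $\rho(\alpha)$ belongs to both $[\rho(\alpha)]_{E'}$ and $[\rho(\alpha)]_E$, these classes must coincide.

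Using $[\rho(\alpha)]_{E'} = [\rho(\alpha)]_E$ for every $\alpha \in \mathcal{C}$, I would then upgrade $\rho$ to a Borel embedding of $\E_0$ into $E$: for any $\alpha$, $\beta \in \mathcal{C}$,
\[
\rho(\alpha)\, E\, \rho(\beta) \iff \rho(\beta) \in [\rho(\alpha)]_E = [\rho(\alpha)]_{E'} \iff \rho(\alpha)\, E'\, \rho(\beta) \iff \alpha\, \E_0\, \beta.
\]
Composing with a Borel smoothness witness $f \colon X \to \R$ for $E$ then yields a Borel map $f \circ \rho \colon \mathcal{C} \to \R$ satisfying $\alpha\, \E_0\, \beta \iff f(\rho(\alpha)) = f(\rho(\beta))$, which would witness the smoothness of $\E_0$---a contradiction. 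Of the steps above, the main one requiring care is the passage from ``$[\rho(\alpha)]_{E'}$ is infinite'' to ``$[\rho(\alpha)]_{E'} = [\rho(\alpha)]_E$'', which is exactly where the hypothesis of the lemma enters; everything else is routine once this identification is in place.
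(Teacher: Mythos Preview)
Your proof is correct and takes a genuinely different route from the paper's. The paper argues constructively: it observes that $A_1 \defeq \set{x : [x]_{E'} \text{ is infinite}}$ is analytic and contained in the coanalytic set $A_2 \defeq \set{x : \forall y\,(x\,E'\,y \iff x\,E\,y)}$, then applies invariant analytic separation (Lemma~\ref{Lemma: invariant analytic separation}) to obtain an $E'$-invariant Borel set $B$ with $A_1 \subseteq B \subseteq A_2$. On $B$ the given smoothness witness for $E$ works for $E'$ as well, and on $B^c$ every $E'$-class is finite, so one can send each point to the minimum of its class under a fixed Borel linear order (this is Borel by Luzin--Novikov). Your argument by contradiction via the $\E_0$-dichotomy is shorter and neatly sidesteps the measurability issue you identified, but it trades separation and Luzin--Novikov for a substantially deeper theorem. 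The paper's approach, by contrast, yields an explicit smoothness witness and shows that the non-Borelness of $A_1$ is not actually an obstacle to a direct argument---one just separates rather than splits.
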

\begin{proof}
Let $f : X \to \R$ witness the smoothness of $E$. Define the following subsets of $X$:
\begin{align*}
    A_1 \,\defeq\, &\{x \in X \,:\, [x]_{E'} \text{ is infinite}\},\\
    A_2 \,\defeq\,& \{x \in X \,:\, \forall\, y \in X\, (x E' y\iff x E y) \}.
\end{align*}
Clearly $A_1$ is analytic and $A_2$ is coanalytic. Furthermore, if $x \in A_1$, then $[x]_{E'}  = [x]_{E}$, and thus $x \in A_2$. So $A_1 \subseteq A_2$. Since $A_1$ is $E'$-invariant and analytic, while $A_2$ is coanalytic, invariant analytic separation (Lemma \ref{Lemma: invariant analytic separation}) yields an $E'$-invariant Borel set $B$ such that $A_1 \subseteq B \subseteq A_2$.

Fix a Borel linear ordering, say $\preccurlyeq$, on $X$ (for instance, we may assume that $X = \R$ \cite[Theorem 15.6]{kechris2012classicalDescriptiveSetTheory} and use the standard ordering on $\R$). 
If $x \in B^c$, then $[x]_{E'}$ is finite. Thus there exists a $\preccurlyeq$-minimum element, say $\mu(x) \in B^c$, such that $x \,E'\, \mu(x)$. For each $x \in B^c$, there are only finitely many $y \in X$ such that $x \,E'\, y$, and so the map $\mu : B^c \to B^c$ is Borel by the Luzin--Novikov theorem. Furthermore, for $x$, $y \in B^c$, we have $x \, E'\, y$ if and only if $\mu(x) = \mu(y)$. On the other hand, if $x$, $y \in B$, then, since $B \subseteq A_2$, we have $x \,E'\, y$ if and only if $f(x) = f(y)$.

Without loss of generality, we may assume that $X \cap \R = \0$. Define $g : X\to X \cup \R$ by
\[g(x) \,\defeq\, 
    \begin{cases}
    f(x) &\text{if } x \in B,\\
    \mu(x) &\text{if } x \in B^c.
    \end{cases}
\]
Since $f$ and $\mu$ are Borel functions and $B$ is a Borel set, $g$ is a Borel function. Furthermore, the above discussion implies that $g$ witnesses the smoothness of $E'$, as desired.
\end{proof}

\section{Borel line graph relations for graphs with finite components}\label{sec:appendix}

In this appendix we prove the following lemma, which was used in the proof of Theorem~\ref{Theorem: A Borel line graph decomposition Exists}:

\begin{Lemma}\label{lemma:finite_components}
    Let $L$ be a Borel graph with finite components. If $L$ is a line graph, then $L$ has a Borel line graph relation.
\end{Lemma}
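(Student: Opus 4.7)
The plan is to leverage the fact that $L$ is locally finite---each vertex belongs to a finite component and hence has finitely many neighbors---so that the \hyperref[Theorem: Luzin--Novikov]{Luzin--Novikov theorem} applies throughout. The overall strategy is to fix a Borel linear order $\preccurlyeq$ on $V(L)$ (say, by identifying $V(L)$ with a subset of $\R$ via \cite[Theorem 15.6]{kechris2012classicalDescriptiveSetTheory}), use it to canonically enumerate the vertices of each component, and then, since each component is finite and hence has only finitely many line graph relations, select one canonically as a function of the labeled isomorphism type of the component.

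Concretely, for each $n \in \N$ the set $A_n \defeq \{v \in V(L) \,:\, |[v]_{\equiv_L}| = n\}$ is Borel by Luzin--Novikov, and these sets partition $V(L)$. For $v \in A_n$, the map $v \mapsto (v_1, \ldots, v_n)$, where $v_1 \prec v_2 \prec \cdots \prec v_n$ is the $\preccurlyeq$-increasing enumeration of $[v]_{\equiv_L}$, is Borel (again by Luzin--Novikov). For each graph $G$ on the vertex set $\{1, \ldots, n\}$, let $B_G \subseteq A_n$ consist of those $v \in A_n$ for which $v_i v_j \in E(L) \iff ij \in E(G)$ for all $1 \leq i < j \leq n$; each $B_G$ is Borel, and the finitely many sets $\{B_G\}_G$ partition $A_n$.

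Whenever $B_G$ is nonempty, the graph $G$ is isomorphic to some component of $L$ and is therefore a line graph, so it admits at least one line graph relation. For each such $G$, I would fix a particular line graph relation $\sim_G$ on $G$ (e.g., the lexicographically smallest one among the finitely many candidates, using the canonical labeling of $E(G)$). The desired relation $\sim$ on $E(L)$ is then defined by declaring $e \sim e'$ iff $e$ and $e'$ lie in a common component with canonical enumeration $v_1 \prec \cdots \prec v_n$, and, writing $e = \{v_i, v_j\}$ and $e' = \{v_k, v_l\}$, we have $\{i,j\} \sim_G \{k,l\}$ where $G$ is the unique graph with $v \in B_G$. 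By Lemma~\ref{lemma: equiv relation is line graph decomp relation}, since the restriction of $\sim$ to the edge set of each component is a line graph relation by construction, $\sim$ is a line graph relation on all of $L$.

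The only obstacle is Borel bookkeeping: verifying that the component enumeration, the labeled-isomorphism-type assignment $v \mapsto G$, and the final transfer of $\sim_G$ back to pairs of edges of $L$ are all Borel. Each of these reduces to a finite computation pulled out via the Luzin--Novikov theorem, so no conceptual difficulty arises. This fits squarely within the standard paradigm, mentioned in the main text, that Borel combinatorics on graphs with finite components trivializes once one can perform finitely many choices per component in a measurable way.
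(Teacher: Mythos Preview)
Your proposal is correct and follows essentially the same approach as the paper: fix a Borel linear order, use it to canonically enumerate each finite component, read off the labeled isomorphism type (the paper does this via a rank function $r$ and maps $n_k$, you via the tuple $(v_1,\ldots,v_n)$ and the sets $B_G$), fix once and for all a line graph relation on each finite type, and pull it back. The only differences are cosmetic bookkeeping.
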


Statements such as Lemma~\ref{lemma:finite_components} are considered routine in descriptive set theory. Indeed, Lemma~\ref{lemma:finite_components} can be seen as a special case of certain general facts about Borel combinatorics on Borel graphs with finite components, for example, \cites[\S5.3]{Pikhurko2021Survey}[\S2.2]{asiLLL}. Nevertheless, in an effort to make this paper more accessible to non-experts, we present a complete proof here. In the following argument, it will be useful to keep in mind that if $X$ is a standard Borel space and $Y$ is a countable set, then a map $f \colon X \to Y$ is Borel if and only if $f^{-1}(y)$ is a Borel set for each $y \in Y$.

\begin{proof}
    Let $L$ be a Borel graph with finite components such that $L$ is a line graph. For each $x \in V(L)$, let $[x]$ denote the component of $L$ containing $x$. Since the components of $L$ are finite, the {Luzin--Novikov theorem} implies that the relation ${\equiv} \defeq \set{(x,y) \in V(L)^2 \,:\, y \in V([x])}$ is Borel (see Examples~\ref{Example: Analytic2} and \ref{Example: Luzin--Novikov}). Fix a Borel linear ordering, say $\preccurlyeq$, on $V(L)$ (for instance, we may assume that $V(L) = \R$ \cite[Theorem 15.6]{kechris2012classicalDescriptiveSetTheory} and use the standard ordering on $\R$). Define a function $r : V(L) \to \N$ by
\begin{align*}
    r(x) = k &\iff \text{ $x$ is the $k$-th element of $V([x])$ under $\preccurlyeq$} \\
    & \iff \exists\, x_1, \dots, x_{k-1} \in V([x])\, \big((x_1 \prec x_2 \prec \cdots \prec x_{k-1} \prec x) \text{ and}\\
    &\hspace{2.5in}\forall\, y \in V([x]){\setminus}\{x_1, \dots, x_{k-1}\}\, (x \preccurlyeq y)\big).
\end{align*}
As $[x]$ is finite, all the quantifiers in the above definition range over finite sets, so the function $r$ is Borel by the Luzin--Novikov theorem. The map $s(x) \defeq |V([x])|$ is also Borel, since we can write
\[
    s(x) = k \iff \big(\exists\, y \equiv x\, (r(y) = k)\big) \text{ and } \big(\forall\, y \equiv x\, (r(y) \leq k)\big).
\]
Next we define, for each positive integer $k$, a partial mapping $n_k \colon V(L) \pto V(L)$ as follows:
\[
    n_k(x) = y \iff y \equiv x \text{ and } r(y) = k.
\]
The function $n_k$ is again Borel. Note that for each $x \in V(L)$,
\[
    V([x]) \,=\, \set{n_1(x), n_2(x), \ldots, n_{s(x)}(x)} \quad \text{and} \quad n_1(x) \prec n_2(x) \prec \cdots \prec n_{s(x)}(x).
\]

Let $\mathcal{G}^{<\infty}$ be the (countable) set of all finite line graphs with vertex set a subset of $\N$. For every $\Gamma \in \mathcal{G}^{<\infty}$, fix an arbitrary line graph relation $\sim_\Gamma$ on $\Gamma$. Given $x \in V(L)$, define $\Gamma_x \in \mathcal{G}^{<\infty}$ by:
\[
    V(\Gamma_x) \,\defeq\, \set{1, 2, \ldots, s(x)}, \quad \quad E(\Gamma_x) \,\defeq\, \big\{\{i, j\} : \set{n_i(x), n_j(x)} \in E(L) \big\}.
\]
Then $r$ establishes an isomorphism $[x] \cong \Gamma_x$, and if $y \equiv x$, then $\Gamma_x = \Gamma_y$. Since the set $E(L)$ and the functions $s$ and $n_k$ for all $k$ are Borel, the map $V(L) \to \mathcal{G}^{<\infty} \colon x \mapsto \Gamma_x$ is Borel as well. (To clarify, this means that for each graph $\Gamma \in \mathcal{G}^{<\infty}$, the set of all $x \in V(L)$ with $\Gamma_x = \Gamma$ is Borel.) 
%

Finally, we define a relation $\sim$ on $E(L)$ as follows: if $e = xy$ and $f = uv$, let
\[e \sim f \iff x \equiv u \text{ and } r(x)r(y) \sim_{\Gamma_x} r(u)r(v).\]
In other words, $\sim$ is obtained by ``copying'' $\sim_{\Gamma_x}$ from $\Gamma_x$ onto $[x]$ for each $x \in V(L)$ in the obvious way. It is now clear that $\sim$ is a desired Borel line graph relation on $L$.
%
\end{proof}

\end{document}